\documentclass{amsart}

\usepackage{enumerate}
\usepackage{amssymb, amsmath}
\usepackage{mathrsfs}
\usepackage[active]{srcltx}
\usepackage{verbatim}
\usepackage{color}

\usepackage[colorlinks,linkcolor={blue},citecolor={blue},urlcolor={red},]{hyperref}


\usepackage[fleqn,tbtags]{mathtools}
\mathtoolsset{showonlyrefs,showmanualtags}


\theoremstyle{plain}
\newtheorem{theorem}{Theorem}[section]
\newtheorem{corollary}[theorem]{Corollary}
\newtheorem{lemma}[theorem]{Lemma}
\newtheorem{proposition}[theorem]{Proposition}

\theoremstyle{remark}
\newtheorem{remark}[theorem]{Remark}
\newtheorem{definition}[theorem]{Definition}

\newtheorem{example}[theorem]{Example}
\newtheorem{assumption}[theorem]{Assumption}

\numberwithin{equation}{section}


\def\N{{\mathbb N}}

\def\R{{\mathbb R}}
\def\C{{\mathbb C}}


\newcommand{\E}{{\mathbb E}}
\renewcommand{\P}{{\mathbb P}}

\newcommand{\calH}{{\mathscr{H}}}


\renewcommand{\a}{\alpha}
\renewcommand{\b}{\beta}

\newcommand{\sgn}{{\rm sgn}}

\newcommand{\wh}{\widehat}
\newcommand{\supp}{{\rm supp}}
\newcommand{\dist}{{\rm dist}}


\renewcommand{\L}{\mathscr{L}}

 \newcommand{\bma}{\begin{bmatrix}}
 \newcommand{\ema}{\end{bmatrix}}


\renewcommand{\Re}{\hbox{\rm Re}}

\newcommand{\wt}{\widetilde}
\newcommand{\calL}{{\mathscr L}}
\newcommand{\n}{\Vert}
\newcommand{\one}{{{\bf 1}}}
\newcommand{\embed}{\hookrightarrow}
\newcommand{\s}{^{*}}
\newcommand{\lb}{\langle}
\newcommand{\rb}{\rangle}

\newcommand{\ov}{\overline}

\newcommand{\D}{\mathscr{D}}
\newcommand{\uL}{\underline{\L}}
\newcommand{\A}{{\mathscr A}}
\newcommand{\uC}{\underline{C}}
\newcommand{\uS}{\underline{S}}
\newcommand{\uD}{\nabla_H}

\newcommand{\uR}{\underline{R}}
\newcommand{\Sin}{S}
\newcommand{\uSin}{\underline S}
\newcommand{\Cos}{C}

\newcommand{\limn}{\lim_{n\to\infty}}

\newcommand{\Dom}{{\mathsf D}}
\newcommand{\Ran}{\mathsf{R}}
\newcommand{\Null}{\mathsf{N}}

\newcommand{\dt}{{\rm d}t}
\newcommand{\ds}{{\rm d}s}
\newcommand{\dmu}{{\rm d}\mu}

\newcommand{\etaA}{\eta}
\newcommand{\etaB}{\eta}

\usepackage{newsymbol}

\let\mathcal \undefined
\def\mathcal{\mathscr}

\let\emptyset \undefined
\let\ge       \undefined
\let\le       \undefined
\newsymbol\le          1336  \let\leq\le
\newsymbol\ge          133E  \let\geq\ge
\newsymbol\emptyset    203F
\newsymbol\notle       230A
\newsymbol\notge       230B

\allowdisplaybreaks

\begin{document}

\title[The Hodge-Dirac operator associated with the O-U operator]
{Finite speed of propagation and off-diagonal bounds for Ornstein-Uhlenbeck operators in infinite dimensions}

\author{Jan van Neerven}

\address{Delft Institute of Applied Mathematics\\
Delft University of Technology\\P.O. Box 5031\\2600 GA Delft\\The Netherlands.}
\email{J.M.A.M.vanNeerven@tudelft.nl}

\author{Pierre Portal}
\address{Australian National University, Mathematical Sciences Institute, \, John Dedman Building, 
Acton ACT 0200, Australia\\
and Universit\'e Lille 1, Laboratoire Paul Painlev\'e, F-59655 {\sc Villeneuve d'Ascq}, France.}
\email{Pierre.Portal@anu.edu.au}         

\begin{abstract}
We study the Hodge-Dirac operators $\D$ associated with a class of non-symmetric Ornstein-Uhlenbeck
operators $\calL$ in infinite dimensions. For $p\in (1,\infty)$ we prove 
that $i\D$ generates a $C_0$-group in $L^p$ with respect to the
invariant measure if and only if $p=2$ and 
$\L$ is self-adjoint. An explicit representation of this $C_0$-group in $L^2$ is given 
and we prove that it has finite speed of propagation.
Furthermore we prove $L^2$ off-diagonal estimates for
various operators associated with $\L$, both in the self-adjoint and the non-self-adjoint case.
\end{abstract} 

\keywords{Ornstein-Uhlenbeck operator, Hodge-Dirac operator, $C_0$-group, finite speed of propagation, 
heat kernel bounds, Davies-Gaffney estimates}

\subjclass[2000]{47A60, 47F05, 60H15, 42B37, 35L05}

\date{\today}

\thanks{This work was supported by the Australian Research Council through the Discovery Project 
DP120103692, as well as through van Neerven's NWO-VICI subsidy 639.033.604 and Portal's Future Fellowship FT130100607.}

\maketitle

\section{Introduction}
In this paper we establish analogues of several well-known $L^p$-results for the wave group
$(e^{it\sqrt{-\Delta}})_{t\ge 0}$, the Schr\"odinger group $(e^{it\Delta})_{t\ge 0}$,  
and the heat semigroup $(e^{t\Delta})_{t \geq 0}$
by replacing the Laplace operator $\Delta$ by a 
(possibly infinite-dimensional and non-symmetric) Ornstein-Uhlenbeck operator. 
Our principal tool is the first-order approach introduced by Axelsson, Keith, and M$^{\rm c}$Intosh 
\cite{AKM} and developed in many recent papers 
\cite{AA11, AAH08, AAM10-2, AAM10, AMR08, AA06, FMP14, HMP08, HMP11, McIMor}, 
which looks at these objects through the 
functional calculus of {\em Hodge-Dirac operators} such as 
\begin{align}\label{eq:diracD} 
D := \bma 0 & -\,{\rm div} \\ \nabla & 0\ema 
\end{align}
acting on the direct sum $L^2(\R^d)\oplus L^2(\R^d;\C^d)$.
This approach has already been used in the Ornstein-Uhlenbeck context in 
\cite{MN,MaaNee11} to obtain necessary and sufficient 
conditions for the $L^p$-boundedness of Riesz transforms. 
The relevant Hodge-Dirac operator is given by 
\begin{equation*}
\D := \bma 0 & \nabla_H\s B \\ \nabla_H & 0\ema,
\end{equation*}
acting on $L^{2}(E,\mu)\oplus L^{2}(E,\mu;H)$, where
$E$ is a Banach space, $\mu$ is  an invariant measure on $E$,  
$H$ is a Hilbert  subspace of $E$,  
 $\nabla_{H}$ is the gradient in the direction of $H$, and 
$B$ is a bounded linear operator acting on $H$ (see Section \ref{sec:OU} for precise definitions).
The corresponding Ornstein-Uhlenbeck operator is then given by 
\begin{align*}
\L = -\frac12\nabla_H\s B \nabla_H.
\end{align*}
\medskip
The first result we prove is a version for Ornstein-Uhlenbeck operators of
the following theorem on $L^p$-extendability of the wave group.
It can be viewed as an analogue of the classical result of H\"ormander \cite{Hor} 
(see also \cite[Theorem 3.9.4]{ABHN}) stating 
that the Schr\"odinger group $(e^{it\Delta})_{t\in\R}$
extends to $L^p(\R^d)$ if and only if $p=2$.

\begin{theorem}\label{thm:dirac} Let $1< p<\infty$ and $d\ge 1$. The following assertions are equivalent:
\begin{enumerate}[\rm(i)]
 \item the operator $i\sqrt{-\Delta}$ generates a $C_0$-group on $L^p(\R^d)$;
 \item $p=2$ or $d=1$.
\end{enumerate}
\end{theorem}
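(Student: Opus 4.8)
\textit{Proof proposal.}\ The plan is to translate assertion~(i) into a question about Fourier multipliers and then invoke the sharp $L^p$-theory of the wave operator. Write $m_t(\xi):=e^{it|\xi|}$ for $\xi\in\R^d$, so that on $L^2(\R^d)$ the self-adjoint operator $\sqrt{-\Delta}$ generates the unitary group $e^{it\sqrt{-\Delta}}$, which acts as the Fourier multiplier with symbol $m_t$. I would first record the easy implication: if $i\sqrt{-\Delta}$ generates a $C_0$-group $(U(t))_{t\in\R}$ on $L^p(\R^d)$, then for $\Re\lambda$ large enough the identity $(\lambda-i\sqrt{-\Delta})^{-1}=\int_0^\infty e^{-\lambda t}U(t)\,{\rm d}t$ on $L^p(\R^d)$, combined with the same identity on $L^2(\R^d)$ (with the unitary group) and the fact that the resolvent of $\sqrt{-\Delta}$ is one and the same Fourier multiplier on every $L^q(\R^d)$, $1<q<\infty$, shows that the two Laplace transforms agree on the dense subspace $L^p\cap L^2$. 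By injectivity of the vector-valued Laplace transform, $U(t)=e^{it\sqrt{-\Delta}}$ on $L^p\cap L^2$ for all $t$, so $U(t)$ is the bounded $L^p$-extension of the multiplier operator with symbol $m_t$; in particular $m_1$ is an $L^p(\R^d)$ Fourier multiplier. (Dilation invariance of the multiplier norm even gives $\|U(t)\|_{\mathscr L(L^p)}=\|m_1\|_{M_p(\R^d)}$ for every $t\neq0$, where $M_p(\R^d)$ is the multiplier algebra.)

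To prove (ii)$\Rightarrow$(i) I would exhibit the group explicitly. For $p=2$ this is Stone's theorem. For $d=1$, $|\xi|=\xi\,\sgn(\xi)$, hence $m_t(\xi)=e^{it\xi}\one_{(0,\infty)}(\xi)+e^{-it\xi}\one_{(-\infty,0)}(\xi)$; with $R_\pm$ the Riesz projections (symbols $\one_{(0,\infty)}$, $\one_{(-\infty,0)}$) and $\tau_s$ translation by $s$, I put $U(t):=\tau_tR_++\tau_{-t}R_-$. The operators $R_\pm$ are bounded on $L^p(\R)$ for $1<p<\infty$ by M.~Riesz's theorem on the Hilbert transform and the $\tau_s$ are isometries, so the $U(t)$ are uniformly bounded; the identities $m_sm_t=m_{s+t}$ give the group law, and $U(t)f-f=(\tau_t-I)R_+f+(\tau_{-t}-I)R_-f$ yields strong continuity from that of translations on $L^p(\R)$. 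Computing the resolvent identifies the generator with $i\sqrt{-\Delta}$.

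It then remains to prove (i)$\Rightarrow$(ii), which by the first paragraph amounts, in contrapositive form, to the statement that $m_1(\xi)=e^{i|\xi|}$ is \emph{not} an $L^p(\R^d)$ Fourier multiplier when $d\ge2$ and $p\neq2$. This is precisely the sharp failure of $L^p$-regularity for the half-wave propagator at zero smoothness: by the theorem of Miyachi and Peral (and, for the general oscillatory multipliers $e^{i|\xi|^a}|\xi|^{-b}$, H\"ormander), $e^{i\sqrt{-\Delta}}(I-\Delta)^{-s/2}$ is bounded on $L^p(\R^d)$ if and only if $s\ge(d-1)\bigl|\tfrac1p-\tfrac12\bigr|$, so the endpoint $s=0$ forces $d=1$ or $p=2$. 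I expect this to be the only genuinely non-elementary input and hence the crux of the matter. Should a self-contained argument be wanted, one would reproduce the lower bound: by stationary phase the convolution kernel of a Littlewood--Paley piece $\psi(2^{-j}\xi)e^{i|\xi|}$ has size $\sim 2^{j(d-1)/2}$ on a neighbourhood of the light cone $\{|x|\approx1\}$ of measure $\sim 2^{-j}$; testing against a fixed bump then makes the $L^p$-operator norm of this piece at least of order $2^{j((d-1)/2-1/p)}$ for $p\ge2$, which is unbounded in $j$ when $d\ge2$ and $p>2$ (the range $p<2$ following by duality) --- contradicting the uniform bound these pieces would satisfy if $e^{i|\cdot|}$ were a multiplier. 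Assembling the three steps gives the equivalence.
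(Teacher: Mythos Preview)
The paper does not actually prove this theorem: it attributes the equivalence to Littman and refers to \cite[Theorem 8.3.13]{ABHN} for a proof ``by Fourier multiplier methods''. Your proposal is precisely such a Fourier multiplier proof, so in spirit it matches what the paper points to. The reduction of (i) to the $L^p$-multiplier property of $m_1(\xi)=e^{i|\xi|}$ via the Laplace transform/uniqueness argument is correct, and your explicit construction for $d=1$ using $U(t)=\tau_tR_++\tau_{-t}R_-$ with the Riesz projections is the standard one.

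For the hard implication you invoke the Miyachi--Peral theorem at the endpoint $s=0$; this is exactly the paper's Theorem~\ref{thm:peral} (also only cited there), so your proof ultimately rests on the same external input the paper itself relies on. That is perfectly legitimate, but be aware that you have not made the argument more self-contained than the paper's citation. Your sketched stationary-phase alternative has the right shape, though the amplitude of the dyadic kernel near the light cone is $\sim 2^{j(d+1)/2}$ rather than $2^{j(d-1)/2}$ (the annulus has thickness $\sim 2^{-j}$ in $\R^d$), so the bookkeeping in your lower bound would need adjusting; the conclusion --- unboundedness in $j$ for $d\ge 2$, $p\neq 2$ --- survives once the exponents are corrected.
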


This equivalence 
is due to Littman \cite{Litt}; a proof by Fourier multiplier methods
can be found in \cite[Theorem 8.3.13]{ABHN}. 

Theorem \ref{thm:dirac} shows that, even in the setting of $\R^{n}$
and the Euclidean Laplacian, simple oscillatory Fourier 
multipliers can fail to be bounded in $L^p$ for $p\neq 2$. The study of such 
operators that are beyond the reach of classical results on Fourier multipliers 
such as the Mihlin-H\"ormander theorem, is an important objective of Fourier 
integral operator theory. One of the first results in this direction is the 
following theorem of Miyachi \cite[Corollary 1]{Miy} and Peral \cite{Per}, that shows that a suitably regularised 
version of the wave group is $L^p$-bounded.

\begin{theorem}\label{thm:peral} Let $1<p<\infty$, and fix $\lambda>0$.  
 The regularised operators 
$$(\lambda-\Delta)^{-{\alpha}/{2}}\cos(t\sqrt{-\Delta}), \quad t\in\R,$$ are bounded on 
$L^p(\R^d)$ if and only if
$\alpha \geq (d-1)|\frac1p - \frac12|$.
\end{theorem}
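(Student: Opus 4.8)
The plan is to read the operator as the Fourier multiplier with symbol $m(\xi)=(\lambda+|\xi|^2)^{-\alpha/2}\cos(t|\xi|)$ on $\R^d$ and to reduce it to a model. Pick $\chi\in C^\infty(\R^d)$ with $\chi\equiv 0$ near the origin and $\chi\equiv 1$ for $|\xi|\ge1$. The low-frequency part $(1-\chi)m$ is smooth and compactly supported, so its inverse Fourier transform lies in the Schwartz class and the corresponding operator is bounded on every $L^p(\R^d)$. On the high-frequency part, where $|\xi|$ is smooth, write $\cos(t|\xi|)=\tfrac12(e^{it|\xi|}+e^{-it|\xi|})$; since $T_{\overline m}$ is $L^p$-bounded exactly when $T_m$ is, since $(\lambda+|\xi|^2)^{-\alpha/2}/|\xi|^{-\alpha}$ is, on $\{|\xi|\ge1/2\}$, a Mihlin--H\"ormander multiplier with Mihlin--H\"ormander reciprocal, and since conjugating with the dilation $f\mapsto f(t\,\cdot)$ turns the remaining factor $e^{\pm it|\xi|}$ into $e^{\pm i|\xi|}$, everything reduces to the $L^p$-boundedness of the model operator $T_\alpha$ with symbol $m_\alpha(\xi)=\chi(\xi)|\xi|^{-\alpha}e^{i|\xi|}$.

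For the \emph{sufficiency}, $T_\alpha$ is bounded on $L^2$ precisely when $\alpha\ge0$. Decompose $m_\alpha=\sum_{j\ge0}m_j$ with $m_j(\xi)=\psi(2^{-j}\xi)|\xi|^{-\alpha}e^{i|\xi|}$, $\psi$ supported in $\{1/2\le|\xi|\le2\}$. After the substitution $\xi=2^j\eta$ the kernel $K_j$ becomes an oscillatory integral with large parameter $2^j$ and phase $\phi_x(\eta)=x\cdot\eta+|\eta|$, whose gradient $x+\eta/|\eta|$ vanishes only when $|x|=1$ and $\eta$ is antiparallel to $x$; since $|\eta|$ is positively homogeneous of degree one, the phase is nondegenerate only in the $d-1$ spherical directions. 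Stationary phase in those variables, combined with (non-)stationary phase in the radial variable, yields
\begin{equation*}
|K_j(x)|\lesssim 2^{j(d-\alpha)}\,2^{-j(d-1)/2}\bigl(1+2^j\,\bigl||x|-1\bigr|\bigr)^{-N},\qquad |x|\le2,
\end{equation*}
and rapid decay for $|x|\ge2$, whence $\|K_j\|_{L^1(\R^d)}\lesssim 2^{j((d-1)/2-\alpha)}$. For $\alpha>(d-1)/2$ the kernels are summable in $L^1$, so $T_\alpha$ is bounded on $L^1$, and on $L^\infty$ by the same estimate for $\overline{m_\alpha}$. At the endpoint $\alpha=(d-1)/2$ one instead proves $T_\alpha\colon H^1(\R^d)\to L^1(\R^d)$, estimating $T_\alpha a$ for an atom $a$ through the $L^2$-bound on a fixed dilate of its supporting ball and through the kernel bounds together with the cancellation $\int a=0$ on the complement. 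Stein's analytic interpolation theorem, applied to $z\mapsto T_z$ between the $L^2$-bound on $\Re z=0$ and the $H^1\to L^1$-bound on $\Re z=(d-1)/2$ (both with at most polynomial growth in $\Im z$), gives boundedness of $T_\alpha$ on $L^p$ for $1<p\le2$ whenever $\alpha=(d-1)(\tfrac1p-\tfrac12)$, hence a fortiori for $\alpha\ge(d-1)(\tfrac1p-\tfrac12)$; duality handles $2\le p<\infty$.

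For the \emph{necessity}, the case $p=2$ is the assertion that the symbol is bounded, and $2<p<\infty$ reduces by duality to $1<p<2$. So let $1<p<2$ and $\alpha<(d-1)(\tfrac1p-\tfrac12)$; then also $\alpha<\tfrac{d+1}{2}-\tfrac1p$. From the large-argument asymptotics of the Fourier transform of the surface measure of $S^{d-1}$, the kernel of $T_\alpha$ (or of the $\cos$-model it came from) behaves near the sphere $\{|x|=1\}$ like a nonzero constant times $e^{\pm i|x|}\bigl||x|-1\bigr|^{\alpha-(d+1)/2}$, up to lower-order terms and a harmless factor $|x|^{-(d-1)/2}$. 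Testing against a bump $f_\varepsilon$ of radius $\varepsilon$ at the origin gives $|T_\alpha f_\varepsilon(x)|\gtrsim \varepsilon^{d}\bigl||x|-1\bigr|^{\alpha-(d+1)/2}$ on $\{\varepsilon\lesssim\bigl||x|-1\bigr|\lesssim1\}$; computing $\|T_\alpha f_\varepsilon\|_p/\|f_\varepsilon\|_p$ and letting $\varepsilon\to0$ contradicts boundedness precisely under the condition $\alpha<(d-1)(\tfrac1p-\tfrac12)$. (If the leading constant happens to be purely imaginary for some exceptional $\alpha$, one works on the other side of the sphere, or perturbs $\alpha$, using that $L^p$-boundedness is monotone in $\alpha$.)

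I expect the sharp endpoint to be the main obstacle. On the positive side this is the bound $T_\alpha\colon H^1\to L^1$ at $\alpha=(d-1)/2$: the kernel is no longer integrable, the singular set is a sphere rather than a point, and one must exploit genuine cancellation of the oscillatory kernel, not merely its size. On the negative side it is arranging the test functions so that the loss $(d-1)(\tfrac1p-\tfrac12)$ is detected for every $p$ and not only for $p$ near $1$, which is exactly why the precise kernel asymptotics near the light cone (rather than mere local integrability) are needed. This is the content of the theorems of Miyachi and Peral cited above.
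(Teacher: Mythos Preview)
The paper does not prove this theorem. Theorem~\ref{thm:peral} is stated in the introduction as a classical result and attributed to Miyachi \cite{Miy} and Peral \cite{Per}; it serves purely as background and motivation for the Ornstein--Uhlenbeck analogues developed later (in particular Theorem~\ref{thm:reg-cos}, which shows that \emph{no} such regularisation result can hold in the Ornstein--Uhlenbeck setting). There is therefore no proof in the paper to compare your proposal against.

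That said, your outline is a faithful sketch of the standard route to the Miyachi--Peral theorem: reduction to the model multiplier $\chi(\xi)|\xi|^{-\alpha}e^{i|\xi|}$, dyadic decomposition and stationary phase to get the kernel bounds $\|K_j\|_{L^1}\lesssim 2^{j((d-1)/2-\alpha)}$, the $H^1\to L^1$ endpoint at $\alpha=(d-1)/2$, Stein analytic interpolation for the intermediate $p$, and the necessity via the asymptotics of the kernel near the unit sphere. You have correctly identified the genuine difficulty as the endpoint $H^1\to L^1$ bound, where the singular support of the kernel is a sphere rather than a point and size estimates alone do not suffice. Your closing sentence already acknowledges that this is precisely the content of the cited papers; for the purposes of the present article nothing more is required.
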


This result has been extended in many directions, and included in a general 
theory of Fourier integral operators (see, in particular, the celebrated paper 
by Seeger, Sogge, and Stein \cite{SSS}, and Section IX.5 of Stein's book \cite{Stein}). 
\medskip

Our paper is part of a long term programme (see also the Hardy space theory 
developed in \cite{MM,MMS12, MMS13} and \cite{MNP11,MNP12,Por}) to expand harmonic analysis 
of Ornstein-Uhlenbeck operators beyond Fourier multipliers and towards Fourier 
integral operators. We first remark that no analogue of Miyachi-Peral's result can hold 
in this context (see Theorem \ref{thm:reg-cos}). This can be seen as a consequence of the fact that, in $L^p$, 
$(e^{-t\calL})_{t \geq 0}$ only extends analytically to a sector of angle 
$\omega_{p}<\frac{\pi}{2}$ (except if $p=2$). This is related to the fact that 
there are no Sobolev embeddings in the Ornstein-Uhlenbeck context, and, in a 
sense, no non-holomorphic functional calculus in $L^p$ for $p \neq 2$ (see 
\cite{HMM}).

Perhaps surprisingly (given that our space of variables is not geometrically doubling), 
we can nonetheless establish the 
fundamental estimates that underpin spectral multiplier theory (see e.g. 
\cite{COSY} and the references therein), namely the finite speed of propagation 
of $(e^{it\D})_{t \in \R}$, and the $L^2$-$L^2$ off-diagonal bounds of 
Davies-Gaffney type for $(e^{-t\calL})_{t \geq 0}$. 
The former generalises to the Ornstein-Uhlenbeck context the following 
classical result for the wave group. Let
$D$ be the Dirac operator on $L^2(\R^d)\oplus L^2(\R^d;\C^d) = L^2(\R^d;\C^{d+1})$
defined by \eqref{eq:diracD}.

\begin{theorem}\label{thm:finite-speed-D}
 The $C_0$-group $(e^{itD})_{t\in\R}$ on $L^2(\R^d;\C^{d+1})$ has unit speed 
 of propagation, meaning that, if $f\in L^2(\R^d;\C^{d+1})$ is supported in a set $K$, then
 $e^{itD}f$ is supported in $\{x \in \R^{d}:\ \dist(x,K) \leq |t|\}$.
\end{theorem}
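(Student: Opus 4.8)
The plan is to prove the support estimate directly by an \emph{energy argument on backward light cones}, after reducing to smooth initial data. Fix $f\in L^2(\R^d;\C^{d+1})$ supported in the closed set $K$ and write $u(t):=e^{itD}f$. The key preliminary observation is that the principal symbol $\widehat D(\xi)$ is a Hermitian matrix for every $\xi\in\R^d$ (consistent with $D=D^{*}$), so $e^{it\widehat D(\xi)}$ is unitary on $\C^{d+1}$ and hence $e^{itD}$ acts \emph{isometrically on every Sobolev space} $H^{s}(\R^d;\C^{d+1})$. Since only the support of the multiplier $\nabla\varphi$ enters the arguments below, and since supports are stable under $L^2$-limits while $e^{itD}$ is bounded, it suffices to prove the claim for $f\in H^{2}$: one applies the estimate to the mollifications $f*\rho_{\varepsilon}$ (which lie in $H^{s}$ for all $s$, are supported in the $\varepsilon$-neighbourhood $K_{\varepsilon}$ of $K$, and converge to $f$ in $L^2$), obtains $\supp e^{itD}(f*\rho_\varepsilon)\subseteq\{x:\dist(x,K)\le|t|+\varepsilon\}$, and lets $\varepsilon\downarrow 0$. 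For $f\in H^{2}$ one has $u\in C(\R;H^{2})\cap C^{1}(\R;H^{1})$ with $\partial_{t}u=iDu$, which gives enough regularity (in particular $L^2$-traces on spheres) to run the computation honestly.

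Next I would carry out the energy estimate. Fix $x_{0}$ with $R:=\dist(x_{0},K)>0$ and, say, $t_{0}\in(0,R)$; it is enough to show $u(t_{0})=0$ a.e.\ on $B(x_{0},R-t_{0})$, for then $u(t_{0})$ vanishes on the open set $\{x:\dist(x,K)>t_{0}\}$. Put $E(t):=\int_{B(x_{0},R-t)}|u(t,x)|^{2}\,dx$ for $t\in[0,R)$. Differentiating, the inward motion of the boundary at unit speed contributes $-\int_{\partial B(x_{0},R-t)}|u|^{2}\,d\sigma$, while the bulk term is $\int_{B}2\,\Re\bigl(\overline{u}\cdot iDu\bigr)\,dx$. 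Writing $u=(v,w)$ with $v$ scalar and $w\in\C^{d}$, integrating the divergence by parts (valid since $v\in H^{1}$ and $w\in H^{1}\subseteq H({\rm div})$), and using that $iD$ is anti-self-adjoint on all of $\R^d$ so that only the boundary term survives, the bulk term reduces to $2\int_{\partial B}\Im\bigl(\overline{v}\,(w\cdot n)\bigr)\,d\sigma$. Hence
\[
E'(t)=\int_{\partial B(x_{0},R-t)}\Bigl(2\,\Im\bigl(\overline v\,(w\cdot n)\bigr)-|v|^{2}-|w|^{2}\Bigr)\,d\sigma\leq 0,
\]
because $2|\Im(\overline v\,(w\cdot n))|\le 2|v|\,|w|\le|v|^{2}+|w|^{2}$. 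Since $B(x_{0},R)\cap K=\emptyset$ we have $E(0)=0$, so $E\equiv 0$ on $[0,R)$ and in particular $u(t_{0})=0$ a.e.\ on $B(x_{0},R-t_{0})$. The case $t_{0}<0$ follows by replacing $D$ with $-D$, which is again a Dirac operator of the same form, and the case $t_0=0$ is trivial.

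The step I expect to be the real obstacle is the \emph{rigorous justification of the energy identity}, because $D$ is not elliptic — its symbol has the eigenvalue $0$ with multiplicity $d-1$ — so $\Dom(D)$ does not embed into $H^{1}_{\mathrm{loc}}$ and, for general $f\in\Dom(D)$, one lacks the trace regularity needed for the boundary integration by parts. The device above (passing to $H^{s}$-data, available precisely because $e^{itD}$ is frequency-wise unitary and therefore preserves each $H^{s}$) resolves this; alternatively one could use the Helmholtz decomposition $L^{2}(\R^d;\C^{d})=\overline{\nabla H^{1}}\oplus\{w:{\rm div}\,w=0\}$, on whose two summands $D$ reduces, respectively, to the scalar half-wave operator $\sqrt{-\Delta}$ and to a decoupled operator, and combine this with the classical unit-speed propagation of the scalar wave equation; or one could invoke the general principle that the commutator bound $\|[D,\varphi]\|_{L^{2}\to L^{2}}\le \|\,|\nabla\varphi|\,\|_{\infty}$ — which here holds with equality, $[D,\varphi]$ being multiplication by a block matrix built from $\nabla\varphi$ — forces the group $(e^{itD})_{t\in\R}$ to have unit speed of propagation.
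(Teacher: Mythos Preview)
Your energy argument on shrinking balls is correct and is the classical PDE route to finite speed of propagation for first-order symmetric hyperbolic systems; the reduction to $H^{2}$ data via mollification (needed because $D$ is not elliptic and so $\Dom(D)\not\hookrightarrow H^{1}_{\mathrm{loc}}$) is handled properly.

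It is worth noting, however, that the paper does \emph{not} actually prove Theorem~\ref{thm:finite-speed-D}: it is quoted in the introduction as a known classical fact about the Euclidean Hodge--Dirac operator, serving as motivation for the paper's main finite-speed result, Theorem~\ref{thm:MainThm}, in the Ornstein--Uhlenbeck setting. The method the paper develops for that generalisation is precisely the commutator approach of M$^{\rm c}$Intosh and Morris that you mention in your final paragraph as an alternative: one bounds $\|[\eta,\D]\|\le\|\nabla_{H}\eta\|_{\infty}$ (Lemma~\ref{lem:comm}), uses the Duhamel-type identity $[\eta,e^{it\D}]=it\int_{0}^{1}e^{ist\D}[\eta,\D]e^{i(1-s)t\D}\,ds$ (Lemma~\ref{lem:CommForm}), and iterates to obtain $|\langle e^{it\D}u,v\rangle|\le|t|^{n}\|\nabla_{H}\eta\|_{\infty}^{n}\|u\|_{2}\|v\|_{2}$ for all $n$, forcing the pairing to vanish when $|t|<1/\|\nabla_{H}\eta\|_{\infty}$. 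Specialised to $E=H=\R^{d}$ and the Euclidean $D$, this yields Theorem~\ref{thm:finite-speed-D} with no regularity juggling, since the commutator $[\eta,D]$ is simply multiplication by a matrix built from $\nabla\eta$. So your principal argument is genuinely different from (and more hands-on than) the paper's technique, while your closing remark about commutator bounds is exactly the mechanism the paper relies on; the advantage of the commutator route is that it transfers verbatim to the infinite-dimensional Gaussian setting, where no local energy/trace machinery is available.
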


The $L^2$-$L^2$ off-diagonal estimates (which can be deduced from Theorem \ref{thm:finite-speed-D}) are integrated heat kernel bounds such as
$$
\|1_{G} e^{t\Delta} (1_{F}u)\|_{2} \lesssim \exp\big(\!-\!\frac{d(F,G)^{2}}{t}\big)\|u\|_{2},
$$
for $F,G\subseteq \R^{d}$, $u \in L^{2}(\R^d)$, and $t>0$.
These bounds play a key role in spectral multiplier theory, but hold far more generally than 
standard pointwise heat kernel bounds (which do not hold, in particular, for Ornstein-Uhlenbeck operators, even in finite dimension).

In a future project, we plan to use the off-diagonal estimates, together with the aforementioned Hardy space 
theory, to study perturbations of Ornstein-Uhlenbeck operators arising from 
non-linear stochastic PDE. 

\medskip
Let us now turn to a summary of the results of this paper.
After a brief introduction to Ornstein-Uhlenbeck operators $\L$
in an infinite-dimensional 
setting in Section \ref{sec:OU}, we begin in Section \ref{sec:groupL}
by proving analogues of Theorems \ref{thm:dirac} and 
\ref{thm:peral} for the operators $i\L$. This is
somewhat easier than proving analogues for $i\sqrt{-\L}$, which is
done in Section \ref{sec:groupD}. Roughly speaking, we find that both $i\L$ and $i\sqrt{-\L}$
generate groups in $L^p$ with respect to the invariant measure if and only if $p=2$ and $\L$ is 
self-adjoint. 
Moreover, in contrast with the Euclidean case, we show  that no amount of resolvent 
regularisation will push the groups into $L^p$. 

We turn to the analogue of Theorem \ref{thm:finite-speed-D} in Section \ref{sec:finite-speed} 
and prove that the group generated 
$i\D$ has finite speed of propagation, whereas the group generated by $i\L$ does not. 
To the best of our knowledge, the former is the first result of this kind in an infinite-dimensional setting.

In Section \ref{sec:DG}, we prove $L^2$-$L^2$ off-diagonal bounds for various operators
associated with $\L$,  such as $e^{t\L}$ and $\nabla_H e^{t\L}$, where $\nabla_H$ is a suitable directional gradient
introduced in Section \ref{sec:OU}.
In the symmetric case, this is done as an application of finite speed of 
propagation, and the off-diagonal bounds are of Gaffney-Davies type. 
In the non-symmetric case, we obtain off-diagonal bounds  for the resolvent operators $(I - t^2\L)^{-1}$ by a direct method.

\section{Non-symmetric Ornstein-Uhlenbeck operators}\label{sec:OU}

We begin by describing the setting that we will be using throughout the paper. 
We fix a real Banach space $E$ and a real Hilbert space $H$,
which is continuously embedded in $E$ by means on an inclusion operator $$i_H: H\embed E.$$ 
Identifying $H$ with its dual via the Riesz representation theorem,
we define $Q_H:= i_H\circ i_H\s$. Let $S = (S(t))_{t\ge 0}$ be a $C_0$-semigroup on $E$ with generator $A$. 

\begin{assumption}\label{ass:mu-infty}
 There exists a centred Gaussian Radon measure $\mu$ on $E$ whose covariance operator 
 $Q_\mu\in\calL(E\s,E)$ is given by 
$$\lb Q_\mu x\s, y\s\rb = \int_0^\infty \lb Q_H S(s)\s x\s, S(s)\s y\s\rb \,\ds, \quad x\s,y\s\in E\s,$$
the convergence of the integrals on the right-hand side being part of the assumption. 
\end{assumption}

The relevance of Assumption \ref{ass:mu-infty} is best explained in terms of its meaning in the 
context of stochastic evolution equations. For this we need some terminology.
Let $W_H$ be an {\em $H$-cylindrical Brownian motion} on an 
underlying probability space $(\Omega,\P)$. By definition, this means that 
$W_H$ is a bounded linear operator from $L^2(\R_+;H)$ to $L^2(\Omega)$ such that for all 
$f,g\in L^2(\R_+;H)$ the random variables $W_H(f)$ and $W_H(g)$ are centred Gaussian variables
and satisfy $$\E(W_H(f)W_H(g)) = \lb f,g\rb,$$ 
where $\lb f,g\rb$ denotes the inner product of $f$ and $g$ in $L^2(\R_+;H)$. 
The operators $W_H(t):H\to L^2(\Omega)$ defined by $W_H(t)h:= W_H(\one_{[0,t]}\otimes h)$ are then well defined, and 
for each $h\in H$ the family $(W(t)h)_{t\ge 0}$ is a Brownian motion; it is a standard Brownian motion if 
the vector $h$ has norm one.
Moreover, for orthogonal unit vectors $h_n$, the Brownian motions $(W(t)h_n)_{t\ge 0}$ are independent.  
For more information the reader is referred to \cite{Nee-Can}.

It is well known that Assumption \ref{ass:mu-infty} holds if and only if the linear stochastic evolution equation
\begin{equation}\label{eq:SCP}\tag{SCP} dU(t) = AU(t) + i_H \,dW_H(t), \quad t\ge 0,\end{equation}
is well-posed and admits an invariant measure. 
More precisely, under Assumption \ref{ass:mu-infty} the problem \eqref{eq:SCP} is well-posed and 
the measure $\mu$ is invariant,
and conversely if \eqref{eq:SCP} is well-posed and admits an invariant measure, then Assumption \ref{ass:mu-infty} 
holds and
the measure $\mu$ is invariant for \eqref{eq:SCP}.
In particular, if \eqref{eq:SCP} has a unique invariant measure,
it must be the measure $\mu$ whose existence is guaranteed by Assumption \ref{ass:mu-infty}. Details may be 
found in \cite{DaPZab, vNW06}, where also the rigorous definitions are provided 
for the notions of solution and invariant measure for \eqref{eq:SCP}. 

\begin{remark}
 More generally one may consider Ornstein-Uhlenbeck operators associated with the problem 
 \begin{equation}\label{eq:SCP2}\tag{SCP} dU(t) = AU(t) + \sigma \,dW_H(t), \quad t\ge 0,\end{equation}
where $\sigma: H \to E$ is a given bounded operator. This does not add any generality, however, 
as can be seen from the following reasoning. First, by the properties of the It\^o 
stochastic integral, replacing $H$ by $H\ominus\Null(\sigma)$ (the orthogonal complement
of the kernel of $\sigma$)  affects  neither the solution process $(U(t,x))_{t\ge 0}$ nor the invariant measure $\mu$,
and therefore this replacement 
leads to the same operator $\L$. Thus we may assume $\sigma$ to be injective. But once we have 
done that, we may identify $H$ with its image $\sigma(H)$ in $E$, 
which amounts to replacing $\sigma$ by the inclusion mapping $i_{\sigma(H)}$ of $\sigma(H)$ into $E$.
\end{remark}

In what follows, Assumption \ref{ass:mu-infty} will always be in force even if it is not explicitly mentioned.
Let $(U(t,x))_{t\ge 0}$ denote the solution of \eqref{eq:SCP} with initial value $x\in E$.
The formula
$$ P(t)f(x) := \E (f(U(t,x))),\quad t\ge 0, \ x\in E,$$
defines a semigroup of linear contractions $P = (e^{t\L})_{t\ge 0}$ 
on the space $B_{\rm b}(E)$ of bounded scalar-valued Borel functions on $E$, the so-called 
{\em Ornstein-Uhlenbeck semigroup} associated with the data $(A,H)$. By Jensen's inequality, 
this semigroup extends to a  $C_0$-semigroup of contractions on $L^p(E,\mu)$. 
Its generator will be denoted by $\L$, and henceforth we shall write $P(t) = e^{t\L}$ for all $t \geq 0$. 

In most of our results we will make the following assumption.

\begin{assumption}\label{ass:L-analytic}
For some (equivalently, for all) $1<p<\infty$ the semigroup $(e^{t\L})_{t\ge 0}$ extends to an analytic 
$C_0$-semigroup on $L^p(E,\mu)$. 
\end{assumption}

Here we should point out that, although the underlying spaces $E$ and $H$ are real, 
function spaces over $E$ will always be taken to be complex.
The independence of $p\in (1,\infty)$ is a consequence of the Stein interpolation theorem.

The problem of analyticity of $(e^{t\L})_{t\ge 0}$ has been studied by various authors in \cite{Fuh, Gol, GolNee, MN1}.
In these papers, various necessary and sufficient conditions for analyticity were obtained. Analyticity
always fails for $p=1$; this observation goes back to \cite{DavSim} where it was phrased for
the harmonic oscillator; the general case follows from \cite{CFMP, MN1}.

Under Assumption \ref{ass:L-analytic} it is possible to represent $\L$ in divergence form. 
For the precise statement of this result we need to introduce the following terminology.
A $C_{\rm b}^1$-{\em cylindrical function} is a function $f:E\to\R$ of the form
 \begin{equation}
 \label{eq:cyl}
 f(x) = \phi(\lb x,x_1\s\rb, \hdots,\lb x,x_n\s\rb)
 \end{equation}
 for some $n\ge 1$, with  $x_j\s\in E\s$ for
 all $j=1,\hdots,n$ and $\phi\in C_{\rm b}^1(\R^n)$. 
The {\em gradient in the direction of $H$} of such a function is defined by
\begin{align}\label{eq:DH} \nabla_H f(x) := \sum_{j=1}^n \frac{\partial \phi}{\partial x_j}(\lb
x,x_1\s\rb,\dots,\lb x,x_n\s\rb) \, i_H\s x_j\s, \quad x\in E.
\end{align}
If $(e^{t\L})_{t\ge 0}$ is analytic on $L^p(E,\mu)$ for some/all $1<p<\infty$, then $\nabla_H$ is
closable as a densely defined operator from $L^p(E,\mu)$ to $L^p(E,\mu;H)$
\cite[Proposition 8.7]{GolNee}. In what follows, $\nabla_H$ will 
always denote this closure and
 $\Dom_p(\nabla_H)$ and $\Ran_p(\nabla_H)$ denote its domain and range. For $p=2$ we usually 
omit the subscripts and write
$\Dom(\nabla_H)=\Dom_2(\nabla_H)$ and $\Ran(\nabla_H)=\Ran_2(\nabla_H)$.

It was shown in \cite{MN1} that if $(e^{t\L})_{t\ge 0}$ is 
analytic on $L^2(E,\mu)$, then $-\L$ admits the `gradient form' representation 
\begin{align}\label{eq:L} 
-\L = \frac12\nabla_H\s B \nabla_H 
\end{align}
for a unique bounded operator $B\in\calL(H)$ which satisfies $$B+B\s = 2I.$$ 
Note that this identity implies the coercivity estimate $\lb Bh,h\rb_H \ge \n h\n_H^2$ for all $h\in H$.

The rigorous interpretation of \eqref{eq:L} is that for $p=2$ the operator $-\L$ is the sectorial operator
associated with the sesquilinear form
$$ (f,g)\mapsto \frac12\lb B\nabla_H f, \nabla_H g\rb.$$
Therefore $\L$ generates an analytic $C_0$-semigroup of contractions on $L^2(E,\mu)$.

It is not hard to show (see \cite{GolNee}) that 
$$\hbox{$\L$ is self-adjoint on 
$L^2(E,\mu)$ if and only if $B = I_H$}$$
where $I_H$ is the identity operator on $H$.
In that case we have $\Dom(\sqrt{-\L}) = \Dom(\nabla_H)$ and 
\begin{align}\label{eq:Riesz-L2}
 \n \sqrt{-\L}f\n_2^2 = \frac12\n \nabla_H f\n_2^2. 
\end{align}

\begin{remark}Necessary and sufficient conditions for equivalence of homogeneous norms
$\n \sqrt{-\L}f\n_p \eqsim\n \nabla_H f\n_p$
in the non-symmetric case have been obtained in \cite{MN}, thereby unifying earlier 
results for the symmetric case in infinite dimensions \cite{CG01, Sh92} and 
the non-symmetric case in finite dimensions \cite{MPRS02}.
\end{remark}

\section{The $C_0$-group generated by $i\L$}\label{sec:groupL}

We start with an analogue of H\"ormander's theorem:

\begin{theorem}\label{thm:Hor-iL}
 Let Assumptions \ref{ass:mu-infty} and \ref{ass:L-analytic} hold and let $1\le p<\infty$.
 The operator $i\L$ generates a $C_0$-group on $L^p(E,\mu)$ if and only if $p=2$ and $\L$ is self-adjoint
 on $L^2(E,\mu)$.
\end{theorem}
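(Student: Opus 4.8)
The plan is to prove the two directions separately, with the easy direction being the "if" part and the substantive content lying in the "only if" part.

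For the "if" direction, suppose $p=2$ and $\L$ is self-adjoint. Then by the discussion in Section \ref{sec:OU}, $-\L$ is a nonnegative self-adjoint operator on the Hilbert space $L^2(E,\mu)$, so $i\L$ is skew-adjoint and Stone's theorem immediately gives that $i\L$ generates a $C_0$-group (of unitaries) on $L^2(E,\mu)$.

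For the "only if" direction, I would argue by contraposition in two steps. First, if $i\L$ generates a $C_0$-group on $L^p(E,\mu)$ for some $p\neq 2$, I want a contradiction. The group property forces $-i\L$ also to be a generator, so both $\pm i\L$ generate $C_0$-semigroups; this is equivalent to saying that the semigroup $(e^{t\L})_{t\ge 0}$ extends to a $C_0$-group $(e^{t\L})_{t\in\R}$, i.e. that $e^{t\L}$ is invertible with uniformly bounded inverse on each compact interval. But under Assumption \ref{ass:L-analytic} the semigroup $(e^{t\L})_{t\ge 0}$ is analytic on $L^p(E,\mu)$, and for an analytic semigroup $e^{t\L}$ maps into $\Dom((-\L)^k)$ for every $k$ and every $t>0$; combined with invertibility this would force $-\L$ to be bounded on $L^p(E,\mu)$. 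The standard way to exclude this is a spectral argument: in the Ornstein-Uhlenbeck setting the generator $\L$ has unbounded spectrum on $L^p$ (the Hermite-type eigenvalues, or more generally the sectorial spectrum accumulating at infinity), so $\L$ cannot be bounded — contradiction. I expect one can also quote directly that the OU semigroup is never a group for $p\neq 2$, but the cleanest route is: analytic $+$ group $\Rightarrow$ bounded generator $\Rightarrow$ contradiction with the known spectrum of $\L$ on $L^p(E,\mu)$.

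Second, I must rule out the case $p=2$ with $\L$ \emph{not} self-adjoint. Here the key is the form representation \eqref{eq:L}: $-\L$ is the sectorial operator associated with the form $(f,g)\mapsto \frac12\lb B\nabla_H f,\nabla_H g\rb$ with $B+B\s=2I$. If $i\L$ generated a $C_0$-group on $L^2(E,\mu)$, then again $(e^{t\L})_{t\in\R}$ would be a $C_0$-group, hence $\{e^{t\L}:t\in[-1,1]\}$ is uniformly bounded, which by a resolvent/Hille--Yosida type estimate forces the numerical range (hence the spectrum) of $\L$ to lie in a horizontal strip $\{|\Re z|\le M\}$. On the other hand, the numerical range of $-\L$ is contained in the closure of $\{\frac12\lb B\nabla_H f,\nabla_H f\rb : \n f\n_2=1\}$; using $\Re\lb B h,h\rb = \n h\n^2$ and $\Im\lb Bh,h\rb = \lb \tfrac{1}{2i}(B-B\s)h,h\rb$, and the fact that $B-B\s\neq 0$ when $\L$ is not self-adjoint, one produces elements $f$ in the form domain with $\n\nabla_H f\n_2$ arbitrarily large and with the ratio $\Im/\Re$ of the form bounded below, so that $\L$ has spectrum escaping every horizontal strip — contradicting the group property. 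The main obstacle, and the step requiring the most care, is exactly this last point: verifying rigorously that $B-B\s\neq 0$ genuinely forces the numerical range of $\L$ out of every horizontal strip. This needs a concrete choice of test functions $f$ (built from cylindrical functions depending on directions $h$ along which $(B-B\s)h\neq 0$) together with the fact that $\nabla_H$ has unbounded range on $L^2(E,\mu)$; intuitively it reflects that a non-self-adjoint sectorial operator with unbounded "imaginary part" simply cannot generate a group. I would isolate this as the core lemma and handle the two exclusions ($p\neq 2$, and $p=2$ non-self-adjoint) as parallel consequences of "group $\Rightarrow$ spectrum in a horizontal strip."
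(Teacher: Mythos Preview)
Your ``if'' direction via Stone's theorem is fine. The ``only if'' direction, however, rests on a false implication that you invoke in both Step~1 and Step~2: you claim that if $i\L$ generates a $C_0$-group, then the semigroup $(e^{t\L})_{t\ge 0}$ extends to a $C_0$-group $(e^{t\L})_{t\in\R}$. This does not follow. That $\pm i\L$ both generate $C_0$-semigroups says precisely that $t\mapsto e^{it\L}$ is a group; it says nothing about extending $t\mapsto e^{t\L}$ to negative real times. For a concrete counterexample take $\L=\Delta$ on $L^2(\R^d)$: the operator $i\Delta$ generates the unitary Schr\"odinger group, yet the heat semigroup certainly does not extend to a group. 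Consequently your ``analytic $+$ group $\Rightarrow$ bounded generator'' argument in Step~1 and your strip argument in Step~2 both collapse at the outset.

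The correct bridge is different. Since $\L$ already generates a $C_0$-semigroup on $L^p(E,\mu)$, the hypothesis that $i\L$ generates a $C_0$-group forces that semigroup to extend analytically to the entire open right half-plane, i.e.\ to have angle of analyticity $\pi/2$, with the group as its boundary values (this is \cite[Corollary 3.9.10]{ABHN}). The paper then finishes in one stroke for all $1\le p<\infty$: analyticity fails altogether on $L^1$, and for $1<p<\infty$ the explicit formula \eqref{eq:angle} for the optimal angle of $(e^{t\L})_{t\ge 0}$ on $L^p(E,\mu)$ gives $\pi/2$ exactly when $p=2$ and $B=B\s$. Your numerical-range idea for the $p=2$ case is not unreasonable in spirit --- the paper sketches a related self-adjointness argument in the remark following the theorem --- but it too must start from ``angle $\pi/2$'' (equivalently, from $\sigma(\L)$ lying in a horizontal strip because $i\L$ is a group generator), not from the non-existent group $(e^{t\L})_{t\in\R}$.
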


\begin{proof}
Let $1<p<\infty$ be fixed and suppose that 
$i\L$ generates a $C_0$-group on $L^p(E,\mu)$. Then, by \cite[Corollary 3.9.10]{ABHN}, the 
semigroup $(e^{t\L})_{t\ge 0}$ on $L^p(E,\mu)$ generated by $\L$ is analytic of 
angle $\pi/2$ and the group generated by $i\L$ is 
its boundary group, i.e.,  $$e^{it\L}f = \lim_{s\downarrow 0} e^{i(s+it)\L}f$$ for all $f\in L^p(E,\mu)$.
But it is well known \cite{CFMP, MN1} that $(e^{t\L})_{t\ge 0}$ fails to be analytic on $L^1(E,\mu)$ and
that for $1<p<\infty$ the optimal angle of analyticity $\theta_p$ of $(e^{t\L})_{t\ge 0}$ in $L^p(E,\mu)$
is given by 
\begin{align}\label{eq:angle}\cot \theta_p :=
\frac{\sqrt{(p-2)^2+p^2\|B-B\s\|^2}}{2\sqrt{p-1}},
\end{align}
with $B\in \calL(H)$ the operator appearing in \eqref{eq:L}. 
If either $p\not=2$ or $B\not=B\s$, this angle is strictly less than $\pi/2$.
\end{proof}

\begin{remark} 
An alternative proof of self-adjointness can be given that does not rely on the formula
\eqref{eq:angle} for the optimal angle.
It relies on the following result on numerical ranges. If
 $G$ is the generator of a $C_0$-semigroup on a complex Hilbert space $\mathscr{H}$ such that 
$\lb Gx,x\rb \in\R$ for all $x\in\Dom(G)$, then $G$ is self-adjoint.
Indeed, for any $\lambda\in\R$ the operator $\lambda-G$ has real numerical range. Therefore, for any
real $\lambda\in \varrho(G)$
the resolvent operator $R(\lambda,G)$ has real numerical range. Hence, 
by \cite[Theorem 1.2-2]{GusRao}, $R(\lambda,G)$ is self-adjoint, and then the same is true for $G$.

Now let us revisit the proof of self-adjointness in the theorem for $p=2$. 
By second quantisation \cite{ChojGol96, GolNee}, the analytic semigroup 
generated by $\L$  on $L^2(E,\mu)$ is 
contractive in the right half-plane $\{z\in\C: \ \Re z>0\}$. 
By general semigroup theory (see, e.g., \cite[Proposition 7.1.1]{Haase}), this implies that the numerical range 
of $\L$ is contained in $(-\infty,0]$. By the observation just made,
this implies that $\L$ is self-adjoint on $L^2(E,\mu)$. 
\end{remark}

Not only does $i\L$ fail to generate a $C_0$-group on $L^p(E,\mu)$ unless $p=2$ and $\L$ is self-adjoint, but
the situation is in fact worse than that. As we will see shortly, for any given $\lambda>0$ and $\a>0$,
the regularised operators $$(\lambda-\L)^{-\a}e^{it\L}$$ 
fail to extend to bounded operators on $L^p(E,\mu)$, unless $p=2$ and $\L$ is self-adjoint. 
This result contrasts with the 
analogous situation for the Laplace operator: it is a classical result of Lanconelli \cite{Lan} (see also Da Prato and Giusti \cite{daPG} for integer values of $\alpha$) that  the regularised Schr\"odinger operators $(\lambda-\Delta)^{-\alpha}e^{it\Delta}$ are bounded
on $L^p(\R^d)$ for all $\alpha> n|\frac1p-\frac12| $.
\medskip

With regard to the rigorous statement of our result 
there is a small issue here in the non-self-adjoint case, for then it is not
even clear how to define these operators for $p=2$.
We get around this in the following way. Any reasonable definition should respect the identity
$$e^{s\L}[(\lambda-\L)^{-\alpha} e^{it\L}] = (\lambda-\L)^{-\alpha}e^{(s+it)\L}, \quad s>0.$$
More precisely, it should be true that the mapping $z\mapsto (\lambda-\L)^{-\alpha}e^{z\L}$ is holomorphic 
in $\{\Re z>0\}$ and that the above identity holds. In the converse direction,
if the mapping $z\mapsto (\lambda-\L)^{-\alpha}e^{z\L}$ (which is well-defined and holomorphic on
an open sector about the positive real axis) extends holomorphically to a function $F_\alpha$ 
on $\{\Re z>0\}$ which is
bounded on every bounded subset of this half-plane, then by general principles the strong
non-tangential limits $ \lim_{s\downarrow 0}  F_\a(s+it)$ exist for almost all $t\in\R$.
For these $t$ we may define the operators 
$ (\lambda-\L)^{-\alpha}e^{it\L}$ to be this limit. In what follows, ``boundedness of the operators
$(\lambda-\L)^{-\alpha}e^{it\L}$ in $L^p(E,\mu)$'' will always be understood in this sense. 

This procedure defines the operators for almost all $t\in \R$. As a side-remark we mention that 
this can be improved by using a version 
of the argument in \cite[Proposition 9.16.5]{ABHN}. For $\beta\ge \a$ let 
$G_\beta$ be the set of full measure for which the non-tangential strong 
limits $ \lim_{s\downarrow 0}  F_\a(s+it)$ exist.
We claim that $G_{\b} = \R$ for all $\b\ge 2\a$.
To prove this, first observe that for all $\gamma'>\gamma\ge\a$ we have 
$G_\gamma\subseteq G_{\gamma'}$ and $G_\gamma+G_{\gamma'} \subseteq G_{\gamma+\gamma'}$. 
If the claim were wrong, then there would be a 
$t\in \complement G_\b$ for some $\b\ge 2\a$. But then for any $t'\in G_{\frac12\beta}$ we have 
$t-t'\in \complement G_{\frac12\beta}$,
for otherwise the identity $t =  t' + (t-t')$ implies $t \in G_{\frac12\beta} + G_{\frac12\beta}\subseteq G_{\beta}$.
This contradiction concludes the proof of the claim.

\begin{theorem}\label{thm:regularised}
 Let Assumptions \ref{ass:mu-infty} and \ref{ass:L-analytic} hold and let $1<p<\infty$.
 If, for some $\lambda>0$ and $\a>0$, the operators $(\lambda-\L)^{-\a}e^{it\L}$, $t\in\R$, 
 are bounded in $L^p(E,\mu)$, then $p=2$ and $\L$ is self-adjoint. 
\end{theorem}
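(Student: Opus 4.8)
The plan is to argue by contradiction, assuming that for some $\lambda>0$, $\a>0$ and some $p\in(1,\infty)$ the operators $(\lambda-\L)^{-\a}e^{it\L}$ are bounded on $L^p(E,\mu)$ in the sense made precise above, while $(p,\L)\neq(2,\text{self-adjoint})$. The key point to exploit is that, by the improvement recorded just before the statement, the set $G_\beta$ of times for which the non-tangential limit exists is all of $\R$ once $\beta\geq 2\a$; so after replacing $\a$ by $2\a$ (which only makes the hypothesis weaker) we may assume the operators $(\lambda-\L)^{-\a}e^{it\L}$ are defined and bounded on $L^p(E,\mu)$ for \emph{every} $t\in\R$, with the holomorphic extension $F_\a$ bounded on bounded subsets of the right half-plane.

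First I would show that uniform boundedness of the family $\{(\lambda-\L)^{-\a}e^{it\L}:t\in\R\}$ on compact $t$-intervals, which follows from the uniform boundedness principle applied to the strongly measurable (hence, by the group-like structure, strongly continuous after the correction) family, forces the holomorphic function $z\mapsto F_\a(z)=(\lambda-\L)^{-\a}e^{z\L}$ to be \emph{uniformly} bounded on every strip $\{0<\Re z<\delta\}$, and then in fact on the whole half-plane $\{\Re z>0\}$ by combining this with the boundedness of $e^{z\L}$ for $\Re z>0$ (the contractivity of the Ornstein--Uhlenbeck semigroup in $L^p$) and the fact that $(\lambda-\L)^{-\a}$ is bounded. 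The next and central step is to convert this into an analyticity statement about $(e^{t\L})_{t\geq 0}$ itself: if $(\lambda-\L)^{-\a}e^{z\L}$ is bounded on $\{\Re z>0\}$, then writing $e^{z\L}=(\lambda-\L)^{\a}\cdot(\lambda-\L)^{-\a}e^{(z/2)\L}\cdot e^{(z/2)\L}$ and using the standard analytic-semigroup estimate $\n(\lambda-\L)^{\a}e^{w\L}\n\lesssim |w|^{-\a}$ valid on any proper subsector of the sector of analyticity, one upgrades the angle of analyticity of $(e^{t\L})_{t\geq 0}$ on $L^p(E,\mu)$ all the way to $\pi/2$. I expect this bootstrap from a resolvent-regularised bound to a genuine sectoriality improvement to be the main obstacle, precisely because in the Ornstein--Uhlenbeck setting there is no Sobolev embedding and no $H^\infty$-calculus for $p\neq 2$, so one must be careful to use only elementary semigroup estimates rather than functional-calculus machinery; the correct way is probably to run a Phragmén--Lindelöf / three-lines argument on $z\mapsto (\lambda-\L)^{-\a}e^{z\L}$ across a sector and a half-plane to interpolate the decay, or alternatively to argue as in \cite[Corollary 3.9.10]{ABHN} that the non-tangential boundary limits of a half-plane-bounded holomorphic semigroup-valued function give a bounded boundary group, which then by Theorem \ref{thm:Hor-iL} collapses everything.

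Once $(e^{t\L})_{t\ge 0}$ is shown to be analytic of angle $\pi/2$ on $L^p(E,\mu)$, the proof finishes exactly as in Theorem \ref{thm:Hor-iL}: by the formula \eqref{eq:angle} the optimal angle $\theta_p$ satisfies $\cot\theta_p=0$, i.e.\ $(p-2)^2+p^2\n B-B\s\n^2=0$, which forces $p=2$ and $B=B\s$, hence $B=I_H$ and $\L$ self-adjoint; and since analyticity of $(e^{t\L})_{t\ge0}$ always fails on $L^1$, the case $p=1$ is vacuous. It may be cleaner to phrase the key step as: the boundedness of $(\lambda-\L)^{-\a}e^{it\L}$ on $L^p$ implies, via the improved $G_\beta=\R$ observation and the bounded holomorphic extension, that $i\L$ generates a $C_0$-group on the space $\Dom_p((\lambda-\L)^{\a})$ equipped with its graph norm; but $(\lambda-\L)^{-\a}$ intertwines this with $L^p$ and commutes with $e^{it\L}$, so one in fact recovers a $C_0$-group on $L^p$ itself, and Theorem \ref{thm:Hor-iL} applies verbatim. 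Either route reduces the new statement to the already-proven one, so the only genuinely new work is the half-plane boundedness upgrade described in the previous paragraph.
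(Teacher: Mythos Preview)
Your overall strategy---show that the hypothesis forces $(e^{t\L})_{t\ge 0}$ to be analytic of angle $\pi/2$ on $L^p(E,\mu)$, then invoke the optimal-angle formula \eqref{eq:angle} exactly as in Theorem \ref{thm:Hor-iL}---is correct and coincides with the paper's. But you are substantially overcomplicating the central step.

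The paper's argument is a single line: for $s>0$ and $t\in\R$ one writes
\[
e^{(s+it)\L} = \big[(\lambda-\L)^{\alpha}e^{s\L}\big] \circ \big[(\lambda-\L)^{-\alpha}e^{it\L}\big].
\]
The first factor is bounded for every real $s>0$ simply because $(e^{t\L})_{t\ge 0}$ is analytic on $L^p(E,\mu)$ (Assumption \ref{ass:L-analytic}); the second factor is bounded by hypothesis. Hence $z\mapsto e^{z\L}$ extends boundedly and holomorphically to the whole right half-plane, and the angle formula finishes the proof. No Phragm\'en--Lindel\"of, no bootstrap, no uniform-boundedness arguments, no passage to the graph-norm space are needed.

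Your proposed factorization $e^{z\L}=(\lambda-\L)^{\a}\cdot(\lambda-\L)^{-\a}e^{(z/2)\L}\cdot e^{(z/2)\L}$ is problematic as written: splitting $z$ as $z/2+z/2$ keeps both halves complex, so neither $(\lambda-\L)^{-\a}e^{(z/2)\L}$ (hypothesis is for purely imaginary exponents) nor $e^{(z/2)\L}$ (only bounded on the original sector) is controlled off the sector, and the standalone factor $(\lambda-\L)^{\a}$ is unbounded. The fix is exactly the real/imaginary split above: use analyticity on the real part and the hypothesis on the imaginary part. Once you see this, all the auxiliary machinery you introduce (replacing $\a$ by $2\a$, strip-uniform bounds, three-lines arguments, the $C_0$-group on $\Dom_p((\lambda-\L)^{\a})$) becomes unnecessary.
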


\begin{proof} For all $s>0$,  
the operators $(\lambda-\L)^{\alpha}e^{s\L}$ are bounded in $L^p(E,\mu)$ 
by the analyticity of the semigroup $(e^{t\L})_{t\ge 0}$.
 The assumptions of the theorem then imply that the operators
 \begin{align}\label{eq:hol-ext}
 e^{(s+it)\L} = (\lambda-\L)^{\alpha}e^{s\L} \circ (\lambda-\L)^{-\alpha}e^{it\L}
 \end{align}
 are bounded on $L^p(E,\mu)$ for all $s>0$ and $t\in\R$, in the sense that the right-hand side provides
 us with an analytic extension of $t\mapsto e^{t\L}$ to $\{\Re z>0\}$.  
 But, as was observed in the proof of Theorem \ref{thm:Hor-iL},
 for $p\not=2$ and $B\not= B\s$ the optimal angle of holomorphy of this semigroup   
 is strictly smaller than $\pi/2$.
 \end{proof}

 \begin{remark}\label{rem:expreg} The `exponentially regularised' operators 
  $e^{s \L}e^{it\L}$ extend to $L^p(E,\mu)$ if $s+it$ belongs to the connected component of the  
  domain of analyticity in $L^p(E,\mu)$ of $z\mapsto e^{z\L}$ which 
  contains the positive real axis. For the standard Ornstein-Uhlenbeck operator
  in finite dimensions (see \eqref{eq:classOU} for its definition), this is the Epperson region 
  $$E_p = \{x+iy\in\C: \ |\sin y| \le \tan \theta_p \sinh x\},$$ where $\theta_p = \arccos|2/p-1|$
  \cite[Theorem 3.1]{Epp}  (see also \cite[Proposition 1.1]{GC}). It contains the right-half plane
  $\{z\in\C: \ \Re z> s_p\}$ for a suitable abscissa $s_p>0$.
  Hence, for all $s>s_p$ the operators $e^{s \L}e^{it\L}$, $t\in\R$, extend to $L^p(E,\mu)$.
  
  In the general case, a similar conclusion can be drawn in the 
  presence of hypercontractivity (which holds if Assumption \ref{ass:L-spectral-gap} below is satisfied, 
  see \cite{CG02}). In that case the operators $e^{s \L}e^{it\L}$
  are bounded on $L^p(E,\infty)$ for all $s>s_p^*$ and $t\in\R$, where $s_p^*>0$ is the infimum of all $s>0$ 
  with the property that $e^{s\L}$ maps $L^{p}(E,\mu)$ into $L^{2}(E,\mu)$ (if $1<p<2)$, 
  respectively $L^{2}(E,\mu)$ into $L^{p}(E,\mu)$ (if $2<p<\infty)$.
 \end{remark}

\section{The $C_0$-groups generated by $i\sqrt{-\L}$ and $i\D$}\label{sec:groupD}

Throughout this section, Assumptions \ref{ass:mu-infty} and 
\ref{ass:L-analytic} are in force. 
On the direct sum $L^p(E,\mu)\oplus L^p(E,\mu;H)$, $1<p<\infty$, we introduce the {\em Hodge-Dirac operator}
\begin{equation}\label{eq:Dirac}
\D := \bma 0 & \nabla_H\s B \\ \nabla_H & 0\ema.
\end{equation}
Hodge-Dirac operators have their origins in Dirac's desire to use first-order operators 
that square to the Laplacian.  They  are commonly used in Riemannian geometry, where they 
arise as $d+d^{*}$ for the exterior derivative $d$.
In their influential paper \cite{AKM}, Axelsson, Keith, and McIntosh have introduced a 
general operator theoretic framework that allows one to transfer ideas used in geometry 
to problems in harmonic analysis and PDE related to Riesz transform estimates. 
For Ornstein-Uhlenbeck operators, this perspective has been introduced in \cite{MN}.

On various occasions we will use the fact (see \cite{AKM}) that $\D$
is bisectorial on $L^2(E,\mu)\oplus L^2(E,\mu;H)$. We recall that a
closed operator $A$ is called {\em bisectorial} if
$i\R\setminus\{0\}\subseteq \varrho(A)$ and 
$$\sup_{t\not=0} \n (I+itA)^{-1}\n <\infty.$$
For some background on bisectoriality we recommend the lecture notes \cite{ADM} and Duelli's
Ph.D. thesis \cite{Due05}. 

Note the formal identity
$$\tfrac12\D ^2 = \tfrac12  \bma -\nabla_H\s B \nabla_H & 0 \\ 0 &- \nabla_H\nabla_H\s B \ema= \bma -\L & 0 \\ 0 &- \uL \ema.$$
Here, the operator $\uL = -\frac12\nabla_H \nabla_H\s B$ is defined as follows. 
First, we define $\nabla_H\nabla_H\s$ on $L^2(E,\mu;H)$ by means of the 
form $(u,v)\mapsto \lb \nabla_H\s u,\nabla_H\s v\rb$, and use this operator to 
define $ -\frac12\nabla_H \nabla_H\s B$ in the natural way on the 
domain $\Dom(\uL) = \{u\in L^2(E,\mu;H): \ Bu\in \Dom(\nabla_H\nabla_H\s)\}$. 
The operator $\uL$ generates a bounded analytic $C_0$-semigroup on $L^2(E,\mu;H)$ and we have
$$e^{t\uL}\nabla_H = \nabla_H e^{t\L}. $$
This identity implies that $(e^{t\uL})_{t\ge 0}$ restricts to a bounded analytic $C_0$-semigroup on
$\ov{\Ran(\nabla_H)}$.

The situation for $1<p<\infty$ is slightly more subtle. 
The semigroup $(e^{t\uL})_{t\ge 0}$ on $\ov{\Ran(\nabla_H)}$ can be shown to extend 
to a bounded analytic $C_0$-semigroup on $\ov{\Ran_p(\nabla_H)}$. We then define $\uL$ on $\ov{\Ran_p(\nabla_H)}$ 
as its generator. This suggests to consider the part of the Dirac operator $\D$ 
in $L^p(E,\mu)\oplus \ov{\Ran(\nabla_H)}$,
and indeed it can be shown that this operator is bisectorial 
on $L^p(E,\mu)\oplus \ov{\Ran(\nabla_H)}$. 
The reader is referred to \cite{MN} for the details. 
If $\uL$ has a bounded $H^\infty$-calculus on $\ov{\Ran_p(\nabla_H)}$ (this is the case 
if $E = H = \R^d$ and also if $\L$ is self-adjoint on $L^2(E,\mu)$), then it follows from the 
second part of \cite[Theorem 2.5]{MN} that $\D$ is bisectorial on all of  $L^p(E,\mu)\oplus L^p(E,\mu;H)$.

If $\D $ is self-adjoint
on the direct sum $L^2(E,\mu)\oplus L^2(E,\mu;H)$, then
$i\D $ generates a bounded $C_0$-group on this space by Stone's theorem.
In the non-self-adjoint case, one may ask whether 
it is still true that $i\D $ generates a $C_0$-group on $L^p(E,\mu)\oplus 
L^p(E,\mu;H)$ for certain exponents $1<p<\infty$. In the light of the above discussion we
have to be a little cautious as to the precise meaning of this 
question; we ask whether the restriction of $(e^{it\D})_{t\in\R}$ to 
$[L^2(E,\mu)\oplus L^2(E,\mu;H)] \cap [L^p(E,\mu)\oplus L^p(E,\mu;H)]$
extends to a $C_0$-group on $L^p(E,\mu)\oplus 
L^p(E,\mu;H)$. Alternatively, one may ask whether 
$i\D$ generates a $C_0$-group on $L^p(E,\mu)\oplus \ov{\Ran(\nabla_H)}$.
In this formulation of the question one may interpret $\D$ as the bisectorial operator
on $L^p(E,\mu)\oplus \ov{\Ran(\nabla_H)}$ as outlined above.

 In  the one-dimensional Euclidean situation, $(e^{itD})_{t \in \R}$ can be 
expressed in terms of the translation group.  This  suggests that the answer to both questions 
for $\D$ could be positive  at least in dimension one. The following result shows
however that the answer is always negative, except when  
$p=2$ and $\L$ is self-adjoint.

\begin{theorem}\label{thm:main}
Let Assumptions  \ref{ass:mu-infty} and \ref{ass:L-analytic} hold and let $1<p<\infty$. 
The following assertions are equivalent:
\begin{enumerate}[\rm(i)]
 \item the operator $i\D$ generates a $C_0$-group on $L^p(E,\mu)\oplus L^p(E,\mu;H)$;
 \item the operator $i\D$ generates a $C_0$-group on $L^p(E,\mu)\oplus \overline{\Ran_p(\nabla_{H})}$;
 \item the operator $i\sqrt{-\L}$ generates a $C_0$-group on $L^p(E,\mu)$;
 \item the operator $\L$ generates a $C_0$-cosine family on $L^p(E,\mu)$;        
 \item $p=2$ and $\calL$ is self-adjoint on $L^2(E,\mu)$. 
\end{enumerate}
\end{theorem}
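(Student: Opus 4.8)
The plan is to prove the chain of equivalences by closing a cycle of implications, using Theorem~\ref{thm:Hor-iL} and the angle formula \eqref{eq:angle} as the source of the hard direction, and then reducing everything else to standard functional-calculus and cosine-family transference arguments. The easy implications are $\text{(v)}\Rightarrow\text{(i)}$ (Stone's theorem, since $\D$ is self-adjoint when $B=I_H$), $\text{(i)}\Rightarrow\text{(ii)}$ (the group restricts to the reducing subspace $L^p(E,\mu)\oplus\overline{\Ran_p(\nabla_H)}$, since $\D$ maps into it and $\overline{\Ran_p(\nabla_H)}$ is $\D$-invariant), and $\text{(iii)}\Leftrightarrow\text{(iv)}$, which is the classical correspondence between cosine families and $C_0$-groups generated by square roots: if $i\sqrt{-\L}$ generates a group then $\cos(t\sqrt{-\L})=\tfrac12(e^{it\sqrt{-\L}}+e^{-it\sqrt{-\L}})$ is a cosine family with generator $\L$, and conversely a bounded-below cosine family has a generator whose square root generates a group (see \cite[Section 3.14]{ABHN}); one must check that $-\L$ is of the right type, which follows from sectoriality.

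Next I would connect the Dirac picture to the scalar picture. For $\text{(ii)}\Rightarrow\text{(iii)}$: on $L^p(E,\mu)\oplus\overline{\Ran_p(\nabla_H)}$ the operator $\D$ is bisectorial with $\tfrac12\D^2=\mathrm{diag}(-\L,-\uL)$, so if $i\D$ generates a $C_0$-group, then so does $i\D$ restricted to each spectral half, and in particular $e^{it\D}$ applied to vectors $(f,0)$ produces, after composing with the projection onto the first coordinate, a family that must agree with $\cos(t\sqrt{-\L})$; the key point is that $\D^2$ restricted appropriately equals $-2\L\oplus(-2\uL)$ and $\overline{\Ran_p(\nabla_H)}$ is exactly the part on which $\nabla_H\s$ is injective, so the two blocks of $\D^2$ are similar via $\nabla_H$. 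This gives that $\L$ generates a cosine family, i.e. (iv), hence (iii). For the reverse route feeding into the cycle, $\text{(iii)}\Rightarrow$ (something that forces self-adjointness): here is where the real work is. If $i\sqrt{-\L}$ generates a $C_0$-group on $L^p$, then by the subordination/functional-calculus identity $e^{z\L}=\frac{1}{\sqrt{\pi}}\int_{\R}e^{-s^2}e^{-iz^{1/2}\cdot 2s\sqrt{-\L}}\,\text{ds}$-type formulas (more cleanly: $e^{t\L}$ is subordinated to the group generated by $i\sqrt{-\L}$ via the Poisson/Gaussian kernel), one deduces that $(e^{t\L})_{t\ge 0}$ extends analytically to the \emph{entire} right half-plane with the growth bounds making it analytic of angle $\pi/2$. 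But \eqref{eq:angle} says the optimal angle $\theta_p<\pi/2$ unless $p=2$ and $B=B\s$. Hence (iii) forces (v).

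Putting the arrows together: $\text{(v)}\Rightarrow\text{(i)}\Rightarrow\text{(ii)}\Rightarrow\text{(iii)}\Leftrightarrow\text{(iv)}$ and $\text{(iii)}\Rightarrow\text{(v)}$ closes the loop, so all five are equivalent. The main obstacle I anticipate is the implication $\text{(iii)}\Rightarrow\text{(v)}$, i.e. showing that a $C_0$-group for $i\sqrt{-\L}$ on $L^p$ actually forces full-angle analyticity of $(e^{t\L})_{t\ge0}$ on $L^p$. One must be careful that $-\L$ is injective / has the spectral properties needed to even define $\sqrt{-\L}$ as a group generator (the invariant measure gives $\L 1=0$, so one works on the complement of constants, or notes $0$ is a simple eigenvalue that causes no harm), and then the subordination argument must be run with quantitative control: the group being $C_0$ gives $\n e^{it\sqrt{-\L}}\n\le Me^{\omega|t|}$, and integrating against the Gaussian then yields analyticity in a half-plane after a shift, which contradicts $\theta_p<\pi/2$ for $p\ne2$. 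A secondary technical point is handling the non-self-adjoint $\uL$-block carefully in $\text{(ii)}\Rightarrow\text{(iii)}$, for which the bisectoriality of $\D$ on $L^p(E,\mu)\oplus\overline{\Ran_p(\nabla_H)}$ established in \cite{MN} and the intertwining $e^{t\uL}\nabla_H=\nabla_H e^{t\L}$ are exactly the tools needed.
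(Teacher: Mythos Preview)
Your proposal is correct in substance and uses the same key ingredients as the paper --- the result that if $\A$ generates a $C_0$-group then $\A^2$ generates an analytic $C_0$-semigroup of angle $\tfrac{\pi}{2}$, together with the angle formula \eqref{eq:angle}, Fattorini's theorem, and Stone's theorem. The logical routing, however, is longer than necessary and contains one muddled step.

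The paper proves (i)$\Rightarrow$(v) and (ii)$\Rightarrow$(v) \emph{directly}: if $i\D$ generates a $C_0$-group on either space, then $-\D^2$ generates an analytic semigroup of angle $\tfrac{\pi}{2}$; since $-\D^2$ is block-diagonal with first block $2\L$, it follows that $\L$ generates analytically of angle $\tfrac{\pi}{2}$ on $L^p(E,\mu)$, and \eqref{eq:angle} forces $p=2$, $B=B^*$. This is exactly the mechanism you invoke for (iii)$\Rightarrow$(v), and it applies just as cleanly to (i) and (ii) --- so your detour (i)$\Rightarrow$(ii)$\Rightarrow$(iii)$\Rightarrow$(v) is unnecessary, and your subordination formula is a red herring (the Gaussian-integral representation \eqref{eq:Tt} is all that is needed).

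Your step (ii)$\Rightarrow$(iii) needs repair as written. The phrase ``$i\D$ restricted to each spectral half'' is not meaningful: $\D$ is off-diagonal and does not leave either summand invariant. What you want is that $C(t):=\tfrac12(e^{it\D}+e^{-it\D})$ is a $C_0$-cosine family with generator $(i\D)^2=-\D^2$; since $-\D^2$ is block-diagonal, so is its resolvent and hence (via \eqref{eq:genC}) so is $C(t)$, and the first block is a cosine family generated by $2\L$. That gives (iv), hence (iii). Your observation (i)$\Rightarrow$(ii) by invariance of the subspace is correct (it follows because the second component of $(I+it\D)^{-1}u$ lies in $\overline{\Ran_p(\nabla_H)}$ whenever that of $u$ does), and is a small addition not spelled out in the paper, where (i) and (ii) are handled in parallel rather than sequentially.
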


A thorough discussion of cosine families is presented 
in \cite{ABHN}, which will serve as our standard reference.
For the reader's convenience we recall some relevant definitions.
Let $X$ be a Banach space. A strongly continuous function $C:\R \to \calL(X)$ is called a
{\em $C_0$-cosine family} if $C(0)=I$ and
$$2C(t)C(s)=C(t+s)+C(t-s), \quad t,s\in\R.$$
By an application of the uniform boundedness theorem, $C_0$-cosine functions are exponentially
bounded; see \cite[Lemma 3.14.3]{ABHN}. Denoting the exponential type of $C$ by $\omega$, 
by \cite[Proposition 3.14.4]{ABHN} 
there exists a unique closed densely defined
operator $A$ on $X$ such that for all $\lambda>\omega$ we have $\lambda^2\in\varrho(A)$ and  
\begin{align}\label{eq:genC}
\lambda (\lambda^2-A)^{-1}x = \int_0^\infty e^{-\lambda t} C(t)x\,\dt, \quad x\in X. 
\end{align}
This operator $A$ is called the {\em generator} of $C$.

\begin{proof}[Proof of Theorem \ref{thm:main}]
(i)$\Rightarrow$(v) and (ii)$\Rightarrow$(v): \ 
By a well-known result from semigroup theory, if $\A$ generates a  
$C_0$-group $G$ on a Banach space $X$, then $\A^2$ generates an analytic $C_0$-semigroup $T$ 
of angle $\frac12\pi$ on $X$ given by the formula 
\begin{align}\label{eq:Tt} T(z)x = \frac1{\sqrt{2\pi z}} 
\int_{-\infty}^\infty e^{-t^2/4z}G(t)x\,\dt, \quad \Re z>0.
\end{align}

Suppose now that (i) or (ii) holds.
By the observation just made $-\D^2$ generates an analytic $C_0$-semigroup on
$L^p(E,\mu)\oplus L^p(E,\mu;H)$, respectively on $L^p(E,\mu)\oplus \ov{\Ran_p(\nabla_H)}$, of angle $\frac12\pi$.
In particular, by considering the first coordinate, $\L$ generates an analytic $C_0$-semigroup on $L^p(E,\mu)$ 
of angle $\frac12\pi$. As we have seen in the proof of Theorem \ref{thm:Hor-iL}, this implies that $p=2$
and that $\L$ is self-adjoint.

\smallskip
(v)$\Rightarrow$(i) and (v)$\Rightarrow$(ii): \ For $p=2$, the self-adjointness of $\L$ 
implies $B=I_H$ and $\uL = \nabla_H\nabla_H\s$, 
and therefore the realisations of $\D$ considered in (i) and (ii) are both self-adjoint. 
Now (i) and (ii) follows from Stone's theorem.

\smallskip
(v)$\Rightarrow$(iii) and (v)$\Rightarrow$(iv): \ The group and cosine family may be defined through 
the Borel functional calculus of $-\L$ 
by $e^{i\sqrt{-\L}}$ and $\cos(t\sqrt{-\L})$; it follows from \eqref{eq:genC} that $\L$ is the  generator
of this cosine family.

\smallskip
(iv)$\Rightarrow$(iii)$\Rightarrow$(v): \ 
By a theorem of Fattorini \cite{Fat69} (see also \cite[Theorem 3.16.7]{ABHN})
the operator $i\sqrt{-\L}$ generates a $C_0$-group on $L^p(E,\mu)$.
Then by \eqref{eq:Tt}, its square $\L$ generates an analytic $C_0$-semigroup 
on $L^p(E,\mu)$ of angle $\pi/2$, and we have already seen that this forces $p=2$ and 
self-adjointness of $\L$.
\end{proof}

Let $L_0^p(E,\mu)$ be the codimension-one subspace of $L^p(E,\mu)$
comprised of all functions $f$ for which $\overline f := \int_E f\,{\rm d}\mu = 0$.

\begin{lemma}\label{lem:null} Let Assumptions \ref{ass:mu-infty} and \ref{ass:L-analytic}
hold
and let $1<p<\infty$. Then
\begin{align*}
\Null_p(\L) & = \Null_p(\sqrt{-\L}) = \Null_p(\nabla_H) = \C\one, \\ 
\ov{\Ran_p(\L)} & = \ov{\Ran_p(\sqrt{-\L})} = \ov{\Ran_p(\nabla_H\s B)}  = L_0^p(E,\mu). 
\end{align*}
On $\ov{\Ran_p(\nabla_{H})}$ we have
\begin{align*}
\Null_p(\uL) & =\Null_p(\sqrt{-\uL}) =  \Null_p(\nabla_H\s B) = \{0\}, \\  \ov{\Ran_p(\uL)} 
& = \ov{\Ran_p(\sqrt{-\uL})} =  \ov{\Ran_p(\nabla_H)}. 
\end{align*}
\end{lemma}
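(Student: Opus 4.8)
The plan is to establish the null space identities first and then derive the range identities by duality and orthogonal-complement arguments, exploiting the Hodge-type relations between $\L$, $\sqrt{-\L}$, $\nabla_H$, and their ``underlined'' counterparts. For the first block, I would start from the well-known fact that the Ornstein-Uhlenbeck semigroup $(e^{t\L})_{t\ge 0}$ is ergodic, so that $e^{t\L}f \to \overline f\,\one$ in $L^p(E,\mu)$ as $t\to\infty$; this gives $\Null_p(\L) = \C\one$ directly, since $\L f = 0$ forces $e^{t\L}f = f$ for all $t$, hence $f = \overline f\,\one$. The inclusions $\C\one \subseteq \Null_p(\nabla_H) \subseteq \Null_p(\sqrt{-\L}) \subseteq \Null_p(\L)$ are then elementary: constants are annihilated by $\nabla_H$ by \eqref{eq:DH} and its closure; if $\nabla_H f = 0$ then $\L f = -\frac12\nabla_H\s B\nabla_H f = 0$, and moreover on a core one sees $\sqrt{-\L}f = 0$; and $\sqrt{-\L}f = 0$ implies $\L f = (\sqrt{-\L})^2 f = 0$. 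Closing the circle with $\Null_p(\L) = \C\one$ gives equality throughout.

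For the range identities in the first block, I would use that $\ov{\Ran_p(\L)}$ is the annihilator of $\Null_{p'}(\L\s)$ in the duality $(L^p)\s = L^{p'}$. Since $\L\s$ is itself an Ornstein-Uhlenbeck operator (the adjoint semigroup is again Ornstein-Uhlenbeck, associated with the adjoint data, and Assumption \ref{ass:L-analytic} is symmetric in $p$ and $p'$), the computation just done applies to $\L\s$ as well, giving $\Null_{p'}(\L\s) = \C\one$. Its annihilator in $L^p(E,\mu)$ is exactly $\{f : \int_E f\,\dmu = 0\} = L_0^p(E,\mu)$. The same argument applies verbatim to $\sqrt{-\L}$, whose adjoint is $\sqrt{-\L\s}$. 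For $\ov{\Ran_p(\nabla_H\s B)}$: one has $\Ran_p(\L) \subseteq \Ran_p(\nabla_H\s B)$ from $-\L = \frac12\nabla_H\s B\nabla_H$, so $L_0^p = \ov{\Ran_p(\L)} \subseteq \ov{\Ran_p(\nabla_H\s B)}$; for the reverse inclusion, $\Ran_p(\nabla_H\s B) \subseteq \ov{\Ran_p(\L)}$ because $\nabla_H\s B u$ is orthogonal (in the $L^p$--$L^{p'}$ pairing) to $\Null_{p'}(\L\s) = \C\one$, since integrating $\nabla_H\s B u$ against the constant $\one$ yields $\lb Bu, \nabla_H \one\rb = 0$; hence $\Ran_p(\nabla_H\s B) \subseteq L_0^p = \ov{\Ran_p(\L)}$.

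For the second block, working on the space $\ov{\Ran_p(\nabla_H)}$, the key structural facts are the intertwining $\nabla_H e^{t\L} = e^{t\uL}\nabla_H$, the identity $\ov{\Ran(\nabla_H)} = \ov{\Ran(\nabla_H\nabla_H\s)}$ on $L^2$, and the divergence-form representation $-\uL = \frac12\nabla_H\nabla_H\s B$. That $\Null_p(\uL) = \{0\}$ on $\ov{\Ran_p(\nabla_H)}$ should follow from the ergodicity transferred through the intertwining: if $u \in \ov{\Ran_p(\nabla_H)}$ with $\uL u = 0$, then $e^{t\uL}u = u$, but on $\ov{\Ran_p(\nabla_H)}$ the semigroup $(e^{t\uL})_{t\ge0}$ is stable (it is the ``gradient part'' complementary to the constants, so $e^{t\uL}u \to 0$), forcing $u = 0$. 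The chain $\{0\} \subseteq \Null_p(\nabla_H\s B) \cap \ov{\Ran_p(\nabla_H)} \subseteq \Null_p(\sqrt{-\uL}) \subseteq \Null_p(\uL)$ then collapses, where the first inclusion here uses $-\uL = \frac12 \nabla_H\nabla_H\s B$ and the second is as before. Finally $\ov{\Ran_p(\uL)} = \ov{\Ran_p(\sqrt{-\uL})} = \ov{\Ran_p(\nabla_H)}$: the inclusions $\Ran_p(\uL) \subseteq \Ran_p(\sqrt{-\uL}) \subseteq \ov{\Ran_p(\nabla_H)}$ (the last since $\sqrt{-\uL}$ maps into $\ov{\Ran_p(\nabla_H)}$, which is where $\uL$ lives), combined with a duality argument showing the annihilator of $\Null_{p'}(\uL\s) = \{0\}$ in $\ov{\Ran_p(\nabla_H)}$ is all of $\ov{\Ran_p(\nabla_H)}$, close the loop.

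The main obstacle I anticipate is the rigorous handling of the ``underlined'' operators for $p \neq 2$: the operator $\uL$ on $\ov{\Ran_p(\nabla_H)}$ is defined somewhat indirectly (as the generator of the extended semigroup), so I must be careful that the ergodicity/stability statement ``$e^{t\uL}u \to 0$ on $\ov{\Ran_p(\nabla_H)}$'' genuinely holds and is not merely an $L^2$ fact — this should follow by combining the $L^2$ stability with the uniform analyticity/boundedness of $(e^{t\uL})_{t\ge 0}$ on $\ov{\Ran_p(\nabla_H)}$ and a density argument, but it needs the results of \cite{MN} invoked cleanly. A secondary point requiring care is verifying that the relevant adjoints ($\L\s$, $\uL\s$) really are again operators of the same type so that the null-space computations transfer; this is standard for Ornstein-Uhlenbeck operators but should be stated explicitly.
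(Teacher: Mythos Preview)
Your approach is broadly sound but considerably more hands-on than the paper's. The paper disposes of almost everything by citing \cite[Proposition 9.5]{MN} and then fills in only two missing pieces: (a) $\ov{\Ran_p(\L)} = L_0^p(E,\mu)$, obtained not via duality with $\L\s$ but from the sectorial direct sum decomposition $L^p(E,\mu) = \Null_p(\L)\oplus\ov{\Ran_p(\L)} = \C\one\oplus\ov{\Ran_p(\L)}$ together with the observation $\lb \L f,\one\rb = 0$ and a codimension count; and (b) the square-root identities, handled by the single general operator-theoretic fact that for $S$ (bi)sectorial with $S^2$ sectorial one has $\Null(S)=\Null(S^2)$ and $\ov{\Ran(S)}=\ov{\Ran(S^2)}$. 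This last device makes your chains $\Null_p(\nabla_H)\subseteq\Null_p(\sqrt{-\L})\subseteq\Null_p(\L)$ and the analogous underlined ones unnecessary, and in particular bypasses the step ``on a core one sees $\sqrt{-\L}f=0$,'' which as written is not quite a proof (you really need the sectorial fact there anyway). Your ergodicity/stability and duality arguments are valid and have the virtue of being self-contained rather than deferring to \cite{MN}; the cost is that, as you note, the stability statement $e^{t\uL}u\to 0$ on $\ov{\Ran_p(\nabla_H)}$ for $p\neq 2$ and the identification of the adjoint structure require exactly the kind of work that \cite{MN} already packages. The paper's route is shorter and avoids these concerns by invoking sectoriality directly.
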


   \begin{proof}
All this is contained in \cite[Proposition 9.5]{MN}, with the exception of the identities
$\ov{\Ran_p(\L)} = L_0^p(E,\mu)$ and the four equalities relating the kernels and closed ranges of $\L$ and $\uL$
with those of their square roots.

Since $\L$ is sectorial we have a direct sum decomposition $L^p(E,\mu) = \Null(\L)\oplus \ov{\Ran(\L)}
= \C\one \oplus \ov{\Ran(\L)}$. If $f$ is any $C_{\rm b}^1$-cylindrical function belonging to
$\Dom(\L)$, then $\lb \L f,\one\rb = \lb B\nabla_H f,\nabla_H \one\rb = 0$.
Since these functions $f$ are dense in $\Dom(\nabla_H)$, and $\Dom(\nabla_H)$ is dense in $\Dom(\L)$,
it follows from $\lb \L f,\one\rb = 0$ that $\Ran(\L) \subseteq L_0^p(E,\mu)$.
Since both $\ov{\Ran(\L)}$ and $L_0^p(E,\mu)$ have codimension one, these spaces must in fact be equal.

The four equalities for the square roots follow from the general fact that if $S$ is sectorial or bisectorial
and $S^2$ is sectorial, then $\Null(S) = \Null(S^2)$ and $\ov{\Ran(S)} = \ov{\Ran(S^2)}$.
  \end{proof}

\begin{remark}
Assumption \ref{ass:mu-infty} implies the identity $$\lb \L f,g\rb + \lb f,\L g\rb 
= -\frac12 \int_E \lb \nabla_H f, \nabla_H g\rb_H\,{\rm d}\mu.$$
This establishes a connection with the theory of Dirichlet forms,
and part of the above lemma
could be deduced from it. A comprehensive treatment of this theory and its many ramifications 
is presented in the monograph  \cite{BGL}.  \end{remark}

 In the remainder of this section  we shall assume that $p=2$ and that $\L$ is self-adjoint, and 
turn to the problem of representing the group
generated by $i\L$ in an explicit matrix form.
Since $\sqrt{-\L}$ is self-adjoint,  $i\sqrt{-\L}$ generates a unitary 
$C_0$-group on $L^2(E,\mu)$ by Stone's theorem. By the Borel functional calculus for self-adjoint operators, 
we have the identities
\begin{align*}
C(t):= \cos(t\sqrt{-\L}) & = \frac12 (e^{it\sqrt{-\L}} + e^{-it\sqrt{-\L}}), \\ 
S(t):= \sin(t\sqrt{-\L}) & = \frac1{2i} (e^{it\sqrt{-\L}} - e^{-it\sqrt{-\L}}). 
\end{align*}

\begin{lemma}\label{lem:ucosine}
For all $t\in\R$ the formulas
$$\begin{aligned}
\uC(t) (\uD f):= \uD \Cos(t)f,  \\ 
\uS(t) (\uD f):= \uD \Sin(t)f,
\end{aligned}
 \qquad f\in\Dom(\uD),$$
define bounded operators $\uC(t)$ and $\uS(t)$ on
$\overline{\Ran(\nabla_H)}$ of norms
$\n \uC(t)\n \le \n C(t)\n$ and $\n \uS(t)\n \le \n S(t)\n$.
\end{lemma}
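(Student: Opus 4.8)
The plan is to prove both estimates by a single computation built on the identity \eqref{eq:Riesz-L2}. Recall that in the present (self-adjoint, $p=2$) situation $\Dom(\nabla_H) = \Dom(\sqrt{-\L})$ and $\n\nabla_H f\n_2^2 = 2\n\sqrt{-\L}f\n_2^2$. Since $\sqrt{-\L}$ is self-adjoint and nonnegative and $\Cos(t) = \cos(t\sqrt{-\L})$, $\Sin(t) = \sin(t\sqrt{-\L})$ are obtained from the bounded Borel functions $\lambda\mapsto\cos(t\lambda)$, $\lambda\mapsto\sin(t\lambda)$ on $[0,\infty)\supseteq\sigma(\sqrt{-\L})$, the Borel functional calculus guarantees that $\Cos(t)$ and $\Sin(t)$ map $\Dom(\sqrt{-\L})$ into itself and commute with $\sqrt{-\L}$ there. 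Hence for $f\in\Dom(\nabla_H)=\Dom(\sqrt{-\L})$ we have $\Cos(t)f\in\Dom(\nabla_H)$, and
\[
\n\nabla_H \Cos(t)f\n_2^2 = 2\n\sqrt{-\L}\,\Cos(t)f\n_2^2 = 2\n \Cos(t)\sqrt{-\L}f\n_2^2 \le 2\n \Cos(t)\n^2\n\sqrt{-\L}f\n_2^2 = \n \Cos(t)\n^2\n\nabla_H f\n_2^2,
\]
and the verbatim computation with $\Sin(t)$ in place of $\Cos(t)$ gives $\n\nabla_H \Sin(t)f\n_2 \le \n \Sin(t)\n\,\n\nabla_H f\n_2$.

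Next I would check that the formulas are well defined on $\Ran(\nabla_H)$; by linearity this amounts to showing that $\nabla_H f = 0$ forces $\nabla_H \Cos(t)f = 0$ and $\nabla_H \Sin(t)f = 0$. By Lemma \ref{lem:null}, $\Null(\nabla_H) = \C\one$, and $\one\in\Null(\L) = \Null(\sqrt{-\L})$, so $\Cos(t)\one = \one$ and $\Sin(t)\one = 0$; thus $\Cos(t)$ and $\Sin(t)$ both map $\C\one$ into $\C\one \subseteq \Null(\nabla_H)$, which settles well-definedness. Consequently $\uC(t)$ and $\uS(t)$ are unambiguously defined linear operators on the dense subspace $\Ran(\nabla_H)$ of $\overline{\Ran(\nabla_H)}$, and the displayed estimate reads $\n\uC(t)(\nabla_H f)\n \le \n \Cos(t)\n\,\n\nabla_H f\n$ and $\n\uS(t)(\nabla_H f)\n \le \n \Sin(t)\n\,\n\nabla_H f\n$. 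They therefore extend uniquely to bounded operators on $\overline{\Ran(\nabla_H)}$ with $\n\uC(t)\n\le\n \Cos(t)\n$ and $\n\uS(t)\n\le\n \Sin(t)\n$.

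There is no serious obstacle here: the only point needing a moment's care is the invariance of $\Dom(\sqrt{-\L})$ under $\Cos(t)$ and $\Sin(t)$ together with the intertwining $\sqrt{-\L}\,\Cos(t) = \Cos(t)\sqrt{-\L}$ on that domain, which is routine once one invokes the spectral theorem for the self-adjoint operator $\sqrt{-\L}$. It is also worth recording explicitly at the outset that $\Dom(\nabla_H) = \Dom(\sqrt{-\L})$ in the self-adjoint case, since that identification (and the accompanying norm equality \eqref{eq:Riesz-L2}) is precisely what converts the trivial bound $\n \Cos(t)\sqrt{-\L}f\n_2 \le \n \Cos(t)\n\,\n\sqrt{-\L}f\n_2$ into the desired gradient estimate.
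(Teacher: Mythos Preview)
Your proof is correct and follows essentially the same approach as the paper: both arguments use the domain identification $\Dom(\nabla_H)=\Dom(\sqrt{-\L})$ together with \eqref{eq:Riesz-L2} to convert the gradient estimate into the trivial bound $\n C(t)\sqrt{-\L}f\n_2\le\n C(t)\n\,\n\sqrt{-\L}f\n_2$, and both handle well-definedness via $\Null(\nabla_H)=\C\one$ and $C(t)\one=\one$. The only difference is that you present the norm estimate before the well-definedness check, whereas the paper does the reverse.
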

\begin{proof}
We will prove the statements for the cosines; the same proof works for the sines. 
In fact, all we use is that the operators $C(t)$ and $S(t)$ are bounded, 
map the constant function $\one$ to itself, and commute with $\L$.

First note that the operators $\uC(t)$ are well-defined on the range of $\nabla_H$. 
Indeed, if $\nabla_H  f = 0$, then 
$f = \overline f \one \in \C\one$ by Lemma \ref{lem:null}, where $\overline f = \int_E f\,{\rm d}\mu$. 
But $\Cos(t)\one=\one$ and therefore $\nabla_H\Cos(t)f = \overline f \nabla_H\Cos(t)\one 
= \overline f\nabla_H \one = 0$. 

From the representation  $\L = -\frac12 \nabla_H\s \nabla_H$ we have 
$\Dom(\sqrt{-\L }) = \Dom(\nabla_H )$ and
$ \n \sqrt{-\L} f \n_2 = \frac1{\sqrt{2}} \n \nabla_H  f\n_2$ (see \eqref{eq:Riesz-L2}).
This gives, for $f\in \Dom(\sqrt{-\L }) = \Dom(\nabla_H )$,
\begin{align*}\n \uC(t) \uD f\n_2
& = \n \nabla_H  \Cos(t)f \n_2 = \sqrt 2 \n \sqrt{-\L } \Cos(t)f \n_2
 \\ & = \sqrt 2 \n \Cos(t) \sqrt{-\L } f \n_2 
\le  \sqrt 2 \n\Cos(t)\n \n \sqrt{-\L } f \n_2= \n \Cos(t)\n\n \nabla_H  f \n_2.
\end{align*}
\end{proof}

Via the $H^\infty$-functional calculus of the self-adjoint bisectorial
operator $\D$ on $L^2(E,\mu)\oplus \ov{\Ran(\nabla_H)}$  
(see \cite{AKM, MN}) we can define the bounded operator ${\rm \sgn}(\D)$
on $L^2(E,\mu)\oplus \ov{\Ran(\nabla_H)}$.
The fact that this operator encodes Riesz transforms gives the main motivation of \cite{AKM}:
to obtain functional calculus results for second-order differential operators together 
with the corresponding Riesz transforms estimates through the functional calculus of an 
appropriate first-order differential operator. We recall the link between ${\rm \sgn}(\D)$
and Riesz transform in the next lemma.
The constant $1/\sqrt 2$ arising here is an artefact of the fact that
we consider the operator $-\L = \frac12\nabla_H\s\nabla_H$ (rather than
$\nabla_H\s\nabla_H$).

\begin{lemma}
 On $L^2(E,\mu)\oplus \ov{\Ran(\nabla_H)}$  we have $${\rm \sgn}(\D) = \frac1{\sqrt{2}}\bma 0 & \uR \\ R & 0\ema,$$
 where
\begin{align}\label{eq:Riesz} 
R :\sqrt{-\L}f & \mapsto \nabla_H f \ \ \hbox{and} \ \ R: \one\to 0, \\ 
\uR : \sqrt{-\uL}g & \mapsto  \nabla_H\s g,
\end{align}
denote the Riesz transforms associated with $-\L$ and $-\uL $, respectively. 
\end{lemma}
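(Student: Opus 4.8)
The plan is to compute $\sgn(\D)$ directly from its definition via the $H^\infty$-functional calculus, exploiting the block structure of $\D$ and the fact that $\D^2$ is block-diagonal with entries $-2\L$ and $-2\uL$. First I would recall that for the (self-adjoint, bisectorial) operator $\D$, the function $\sgn$ applied through the functional calculus satisfies $\sgn(\D) = \D\, |\D|^{-1}$ on $\ov{\Ran(\D)}$, where $|\D| = (\D^2)^{1/2}$, and $\sgn(\D) = 0$ on $\Null(\D)$. Since
$$\tfrac12\D^2 = \bma -\L & 0 \\ 0 & -\uL\ema$$
on $L^2(E,\mu)\oplus\ov{\Ran(\nabla_H)}$, we get $|\D| = \sqrt{2}\,\bma \sqrt{-\L} & 0 \\ 0 & \sqrt{-\uL}\ema$ by uniqueness of the positive square root and the fact that this block-diagonal operator is itself positive self-adjoint. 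Hence on the appropriate ranges
$$\sgn(\D) = \D|\D|^{-1} = \frac1{\sqrt2}\bma 0 & \nabla_H\s B \\ \nabla_H & 0\ema \bma (\sqrt{-\L})^{-1} & 0 \\ 0 & (\sqrt{-\uL})^{-1}\ema = \frac1{\sqrt2}\bma 0 & \nabla_H\s B(\sqrt{-\uL})^{-1} \\ \nabla_H(\sqrt{-\L})^{-1} & 0\ema,$$
and it remains to identify the off-diagonal entries with the Riesz transforms $\uR$ and $R$.

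Next I would justify the identification of entries with \eqref{eq:Riesz}. For the lower-left entry: by Lemma \ref{lem:null}, $\ov{\Ran(\sqrt{-\L})} = L_0^2(E,\mu)$ and $\Null(\sqrt{-\L}) = \C\one$, so every $g\in L^2(E,\mu)$ decomposes as $g = \ov g\one + \sqrt{-\L}f$ for a unique $f\in\Dom(\sqrt{-\L})\cap L_0^2$; then $\nabla_H(\sqrt{-\L})^{-1}$ acting on the range part sends $\sqrt{-\L}f\mapsto \nabla_H f$, which is precisely the definition of $R$, and $\sgn(\D)$ kills the kernel part $\ov g\one$, matching $R:\one\mapsto 0$. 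This is consistent because Lemma \ref{lem:null} guarantees $\nabla_H\one = 0$, so $\nabla_H f$ depends only on $\sqrt{-\L}f$, not on $f$ itself; boundedness of $R$ on $\ov{\Ran(\sqrt{-\L})}$ is exactly the $L^2$-boundedness of the Riesz transform, which follows from \eqref{eq:Riesz-L2}: $\|\nabla_H f\|_2 = \sqrt2\|\sqrt{-\L}f\|_2$, so in fact $R$ is (up to the factor $\sqrt2$) an isometry. For the upper-right entry: on $\ov{\Ran(\nabla_H)}$ we have $\Null(\sqrt{-\uL}) = \{0\}$ and $\ov{\Ran(\sqrt{-\uL})} = \ov{\Ran(\nabla_H)}$ by Lemma \ref{lem:null}, so $(\sqrt{-\uL})^{-1}$ is densely defined there, and $\nabla_H\s B(\sqrt{-\uL})^{-1}$ sends $\sqrt{-\uL}g\mapsto \nabla_H\s Bg$; since we are in the self-adjoint case $B = I_H$, this reads $\sqrt{-\uL}g\mapsto\nabla_H\s g$, matching the definition of $\uR$.

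The main obstacle — and the step requiring the most care — is checking that $\sgn(\D)$ really acts as $\D|\D|^{-1}$ \emph{with the block-diagonal $|\D|^{-1}$ interpreted coordinatewise on the correct subspaces}, rather than glibly manipulating unbounded operators. The clean way is to avoid $|\D|^{-1}$ entirely: approximate $\sgn$ by bounded functions $\psi_\epsilon(z) = z/\sqrt{z^2+\epsilon^2}$ of the bisectorial operator $\D$, for which $\psi_\epsilon(\D)$ is genuinely bounded and, by the functional calculus together with the block structure of $\D^2$, equals $\D(\D^2+\epsilon^2)^{-1/2} = \frac1{\sqrt2}\bma 0 & \nabla_H\s B(-2\uL+\epsilon^2)^{-1/2}\cdot\sqrt2 \\ \sqrt2\,\nabla_H(-2\L+\epsilon^2)^{-1/2} & 0\ema$ — here one uses that $(\D^2+\epsilon^2)^{-1/2}$ is block-diagonal because $\D^2$ is, and that $\D$ intertwines the two diagonal blocks via $\nabla_H\L = \uL\nabla_H$ (equivalently $e^{t\uL}\nabla_H = \nabla_H e^{t\L}$, stated earlier). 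Then let $\epsilon\downarrow0$: on $\Dom(\sqrt{-\L})\cap L_0^2$, $\sqrt2\,\nabla_H(-2\L+\epsilon^2)^{-1/2}(\sqrt{-\L}f) = \nabla_H(\tfrac{-\L}{-\L+\epsilon^2/2})^{1/2}f \to \nabla_H f$ by the spectral theorem and dominated convergence (the functional calculus operators are uniformly bounded since $\sup_\epsilon\|\psi_\epsilon\|_\infty<\infty$), with the analogous limit for the $\uR$-entry; strong convergence on the dense range, together with uniform boundedness, upgrades to $\sgn(\D)$ on all of $\ov{\Ran(\D)} = L_0^2(E,\mu)\oplus\ov{\Ran(\nabla_H)}$, and $\sgn(\D) = 0$ on $\Null(\D) = \C\one\oplus\{0\}$, completing the identification.
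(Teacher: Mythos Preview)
Your proposal is correct and follows essentially the same route as the paper: approximate $\sgn$ by bounded functions in the $H^\infty$-calculus, exploit the block-diagonal structure of $\D^2$ to compute the approximants explicitly, and pass to the limit via the convergence lemma. The only cosmetic difference is the choice of approximating sequence --- the paper uses $f_n(z) = nz/(1+n\sqrt{z^2})$, giving $\sgn(\D) = \lim_n \D(n^{-1}+\sqrt{\D^2})^{-1}$, whereas you use $\psi_\epsilon(z) = z/\sqrt{z^2+\epsilon^2}$ --- and both lead to the same entrywise limit computation identifying $R$ and $\uR$.
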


\begin{proof}
 Recall from Lemma \ref{lem:null} that $L^2(E,\mu) = \ov{\Ran(\sqrt{-\L})}\oplus \C\one$
and $\ov{\Ran(\nabla_H)} = \ov{\Ran(\sqrt{-\uL})}$.
Hence the above relations define $R$ and $\uR$ uniquely.

By the convergence lemma for the $H^\infty$-calculus we have
${\rm sgn}(\D) = \limn f_n(\D)$ strongly, where, for all $z \not \in i\R$, 
\begin{align*} 
f_n(z) =  \frac{nz}{1+n\sqrt{z^2}}.
\end{align*}
Here we take the branch of the square root that is holomorphic on $\C \setminus (-\infty,0]$.
Hence,
\begin{align*}
 {\rm sgn}(\D) & = \limn n\D (I+n\sqrt{\D^2})^{-1} 
 \\ &  = \limn \D \bma(n^{-1}+\sqrt{-2\L})^{-1} & 0 \\ 0 & (n^{-1}+\sqrt{2\uL})^{-1} \ema 
 \\ &  = \limn \bma 0 & \nabla_H\s(n^{-1}+\sqrt{-2\uL})^{-1} \\ \nabla_H (n^{-1}+\sqrt{-2\L})^{-1} & 0\ema. 
\end{align*}
It is immediate from the above representation that  ${\rm sgn}(\D) \bma \one\\ 0\ema = 0$.  
Also, 
\begin{align*}
 {\rm sgn}(\D) \bma \sqrt{-\L} f \\ \sqrt{-\uL} g \ema 
 & = \limn \bma \nabla_H\s(n^{-1}+\sqrt{-2\uL})^{-1} \sqrt{-\uL} g  \\ \nabla_H (n^{-1}+\sqrt{-2\L})^{-1} \sqrt{-\L} f \ema
  = \frac1{\sqrt{2}}\bma \nabla_H\s g \\ \nabla_H f\ema. 
\end{align*}
\end{proof}

\begin{lemma}\label{lem:Rcommute}
For all $t\in\R$ we have 
$$R\Cos(t) = \uC(t)R, \quad \Cos(t)\uR = \uR \uC(t),$$
$$R\Sin(t) = \uSin(t)R, \quad \Sin(t)\uR = \uR \uSin(t). $$ 
Furthermore, if $f\in\Dom(\sqrt{-\L })$, then $Rf\in\Dom(\sqrt{-\uL })$ and 
$$\sqrt{ -\uL}R f= R\sqrt{-\L }f . $$ 
Likewise, if $g\in\Dom(\sqrt{-\uL })$, then $\uR g\in\Dom(\sqrt{-\L })$ and 
$$ \sqrt{-\L }\uR g = \uR \sqrt{ -\uL}g .$$
Finally,
$$\frac12 \uR R = P_{\ov{\Ran(\nabla_H\s)}}= P_{L_0^2(E,\mu)}, \quad \frac12 R\uR  =  P_{\ov{\Ran(\nabla_H)}},$$ 
where the right-hand sides denote the orthogonal projections onto the indicated subspaces. 
\end{lemma}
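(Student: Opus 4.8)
The plan is to build everything from the two defining relations of $R$ and $\uR$, namely that $R$ sends $\sqrt{-\L}f$ to $\nabla_H f$ and $\one$ to $0$, while $\uR$ sends $\sqrt{-\uL}g$ to $\nabla_H\s g$; together with the intertwining identities already established, in particular $\nabla_H\Cos(t) f = \uC(t)\nabla_H f$ and $\nabla_H\Sin(t)f = \uS(t)\nabla_H f$ from Lemma \ref{lem:ucosine}, and the fact that $\Cos(t)$, $\Sin(t)$ commute with $\L$ (hence with $\sqrt{-\L}$ by the Borel functional calculus). The first step is the commutation with the cosine and sine families. Given $f\in\Dom(\sqrt{-\L})=\Dom(\nabla_H)$, write $u=\sqrt{-\L}f$; then on one hand $R\Cos(t)u = R\sqrt{-\L}\Cos(t)f = \nabla_H\Cos(t)f = \uC(t)\nabla_H f$, and on the other $\uC(t)Ru = \uC(t)\nabla_H f$, so the two agree on $\ov{\Ran(\sqrt{-\L})}$; since both sides kill $\one$ (because $\Cos(t)\one=\one$ and $R\one=0$, while $\uC(t)Ru$ trivially vanishes there), they agree on all of $L^2(E,\mu)$. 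The identity $\Cos(t)\uR = \uR\uC(t)$ is obtained the same way starting from $g\in\Dom(\sqrt{-\uL})$ and using $\nabla_H\s\uC(t)g$ versus $\Cos(t)\nabla_H\s g$; here one should note that $\uC(t)$ maps $\ov{\Ran(\nabla_H)}$ into itself and commutes with $\uL$ (both follow from Lemma \ref{lem:ucosine} and the spectral definition of $\uL$), and that $\Dom(\sqrt{-\uL})$ is dense in $\ov{\Ran(\nabla_H)}$ by Lemma \ref{lem:null}. The sine identities are verbatim the same argument.

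The second step is the intertwining with the square roots. For $f\in\Dom(\sqrt{-\L})$, the element $Rf = \nabla_H f$ lies in $\Dom(\sqrt{-\uL})$ because $\sqrt{-\uL}\nabla_H f = \tfrac{1}{\sqrt2}\nabla_H\sqrt{-\L}\cdot\sqrt2\, f$—more carefully, from $-\uL\nabla_H = \nabla_H(-\L)$ on the relevant domain one gets $\sqrt{-\uL}\,\nabla_H f = \nabla_H\sqrt{-\L}f$ by uniqueness of the positive square root applied to $\nabla_H(-\L)f = (-\uL)\nabla_H f$, and then $\sqrt{-\uL}Rf = \sqrt{-\uL}\nabla_H f = \nabla_H\sqrt{-\L}f = R\sqrt{-\L}f$. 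The reverse identity $\sqrt{-\L}\uR g = \uR\sqrt{-\uL}g$ for $g\in\Dom(\sqrt{-\uL})$ is symmetric, using $-\L\nabla_H\s = \nabla_H\s(-\uL)$.

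The last step is the projection formulas. From the previous lemma, $\tfrac1{\sqrt2}R$ and $\tfrac1{\sqrt2}\uR$ are precisely the off-diagonal blocks of $\sgn(\D)$, which is a self-adjoint operator satisfying $\sgn(\D)^2 = P$, the orthogonal projection onto $\ov{\Ran(\D)}$ (this is a standard property of the $H^\infty$-calculus of a self-adjoint bisectorial operator: $(\sgn)^2$ is the indicator of $\R\setminus\{0\}$, which is the spectral projection complementary to the kernel). Computing $\sgn(\D)^2$ in block form gives the diagonal operator $\mathrm{diag}(\tfrac12\uR R,\ \tfrac12 R\uR)$, so $\tfrac12\uR R$ and $\tfrac12 R\uR$ are the orthogonal projections onto the closures of the ranges of the respective blocks of $\D$—that is, onto $\ov{\Ran(\nabla_H\s B)} = \ov{\Ran(\nabla_H\s)}$ and $\ov{\Ran(\nabla_H)}$—and the identification $\ov{\Ran(\nabla_H\s)} = L_0^2(E,\mu)$ is Lemma \ref{lem:null}. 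Alternatively, and perhaps more cleanly, one verifies $\tfrac12\uR R = P_{L_0^2}$ directly: on $\one$ both sides vanish, while for $f\in\Dom(\sqrt{-\L})$ with $u=\sqrt{-\L}f\in\ov{\Ran(\sqrt{-\L})}=L_0^2$ one has $\tfrac12\uR R u = \tfrac12\uR\nabla_H f = \tfrac12\uR\sqrt{-\uL}(\sqrt2\,f\text{-type expression})$—the bookkeeping of the $\sqrt2$'s is exactly what makes the constant $\tfrac12$ come out right, matching the $1/\sqrt2$ normalisation in the preceding lemma.

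The main obstacle I expect is not conceptual but domain-theoretic: one must be careful that $\uC(t)$, $\uS(t)$ genuinely commute with $\sqrt{-\uL}$ on the correct dense subspace, that $\uR$ is well-defined and bounded on $\ov{\Ran(\nabla_H)}$ rather than merely densely defined, and that all the manipulations $\sqrt{-\uL}\,\nabla_H = \nabla_H\sqrt{-\L}$ are justified on a core and then extended by closedness. These are exactly the kinds of points where an informal argument can go wrong, so the proof should pin down a common core (the $C^1_{\mathrm b}$-cylindrical functions in $\Dom(\L)$, which are dense in $\Dom(\nabla_H)$) and run every identity there first.
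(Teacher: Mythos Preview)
Your overall strategy matches the paper's for the cosine/sine commutation with $R$ and for the projection formulas via $\sgn(\D)^2$. There is, however, a concrete slip in your second step that you should fix: for $f\in\Dom(\sqrt{-\L})$ it is \emph{not} true that $Rf=\nabla_H f$. The defining relation is $R(\sqrt{-\L}f)=\nabla_H f$, so the chain ``$\sqrt{-\uL}Rf=\sqrt{-\uL}\nabla_H f=\nabla_H\sqrt{-\L}f=R\sqrt{-\L}f$'' is invalid at the first equality. Your idea can be salvaged by working instead with $u=\sqrt{-\L}f$ for $f\in\Dom(-\L)$: then $Ru=\nabla_H f$, and showing $\sqrt{-\uL}Ru=R\sqrt{-\L}u$ reduces to the intertwining $\sqrt{-\uL}\,\nabla_H f=\nabla_H\,\sqrt{-\L}f$, which does follow from $(-\uL)\nabla_H=\nabla_H(-\L)$ and a square-root transference argument---but this requires some domain care, exactly the kind of issue you flag at the end.

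The paper avoids this altogether by a cleaner device: once the sine identities $\uS(t)R=R\,\Sin(t)$ and $\Sin(t)\uR=\uR\,\uS(t)$ are in hand, it simply differentiates them at $t=0$, using that $\frac{d}{dt}\big|_{t=0}\Sin(t)=\sqrt{-\L}$ and $\frac{d}{dt}\big|_{t=0}\uS(t)=\sqrt{-\uL}$ on the appropriate domains. This yields $\sqrt{-\uL}R=R\sqrt{-\L}$ and $\sqrt{-\L}\,\uR=\uR\sqrt{-\uL}$ directly, with no separate square-root argument needed. For the identity $\Cos(t)\uR=\uR\,\uC(t)$ the paper also takes a slightly more concrete route than yours, testing against vectors of the special form $\nabla_H f$ with $\nabla_H f\in\Dom(\nabla_H^*)$ and computing $\Cos(t)\uR\sqrt{-\uL}\,\nabla_H f=\Cos(t)\nabla_H^*\nabla_H f=-2\Cos(t)\L f=-2\L\,\Cos(t)f=\nabla_H^*\uC(t)\nabla_H f$, then invoking that $\Ran(\nabla_H)\cap\Dom(\sqrt{-\uL})$ is a core; this sidesteps having to establish separately that $\uC(t)$ commutes with $\uL$.
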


 \begin{proof}
We have, 
for $f\in \Dom(\sqrt{-\L }) = \Dom(\nabla_H )$, 
\begin{align*}
R\Cos(t)\sqrt{-\L }f  = R\sqrt{-\L }\Cos(t) f = \nabla_H  \Cos(t)f =  \uC(t)\nabla_H  f = \uC(t)R \sqrt{-\L }f.
\end{align*}
This gives the first identity on the range of $\sqrt{-\L }$. On $\Null(\sqrt{-\L}) = \C\one$ 
(see Lemma \ref{lem:null}) 
the identity is trivial since $R \one =0$. 
Since $L^2(E,\mu) = \Null(\sqrt{-\L})\oplus \ov{\Ran(\sqrt{-\L})}$ by the sectoriality of $\sqrt{-\L}$,
this proves the first identity. The corresponding identity for the sine function is proved similarly.

The identities $R \sqrt{-\L} = \sqrt{-\uL }R$ and $\uR \sqrt{-\uL} = \sqrt{-\L }\uR$
follow by differentiating the identities
$\uSin(t)R = R\Sin(t)$ and $\uR \uSin(t) = \Sin(t)\uR$ at $t=0$. 

If $\nabla_H  f\in \Dom(\sqrt{-\uL}) = \Dom(\nabla_H \s)$, then
\begin{align*}
 \Cos(t)\uR \sqrt{-\uL} \nabla_H  f & = \Cos(t) \nabla_H \s \nabla_H  f =  -2\Cos(t) \L f = -2\L\Cos(t) f
\\ & = \nabla_H \s \uC(t) \nabla_H  f =  \uR\sqrt{-\uL}\uC(t)\nabla_H  f = \uR\uC(t)\sqrt{-\uL}\nabla_H  f.
\end{align*}
Noting that $\Ran(\uD)\cap \Dom(\sqrt{-\uL})$ is a core for $\Dom(\sqrt{-\uL})$ (it contains the $e^{t\uL}$-invariant 
dense linear subspace $\{e^{t\uL} \nabla_H  f: \ t>0, \ f\in \Dom(\nabla_H )\}$), 
it follows that $\Cos(t)\uR \sqrt{-\uL} = \uR\uC(t)\sqrt{-\uL}$. 
This proves the second identity on the range of $\sqrt{-\uL}$. 
Since $\ov{\Ran(\sqrt{-\uL})} = \ov{\Ran(\nabla_H)}$ this proves the identity  $\Cos(t)\uR  = \uR\underline{\Cos}(t)$. 
The corresponding sine identity is proved in the same way. 

Finally, the last two identities follow from
$$\bma P_{\ov{\Ran(\nabla_H\s)}} & 0 \\ 0 & P_{\ov{\Ran(\nabla_H)}}\ema =
P_{\ov{\Ran(\D)}} = {\rm sgn}^2(\D) = \frac12\bma 0 & \uR \\ R & 0\ema^2 =  \bma \frac12\uR R & 0 \\ 0 & \frac12 R\uR\ema,$$
recalling that $\Ran(\nabla_H\s) = L_0^2(E,\mu)$.
\end{proof}
 
\begin{theorem}\label{thm:main2} 
Let Assumption  \ref{ass:mu-infty} hold and suppose that $\L$ is self-adjoint on $L^2(E,\mu)$.
The $C_0$-group generated by $\frac{i}{\sqrt{2}}\D$ on $L^2(E,\mu)\oplus \overline{\Ran(\nabla_H)}$ is given by
\begin{equation}\label{eq:group} e^{\frac{i}{\sqrt{2}}t\D}= 
\bma \Cos(t) & \frac{i}{\sqrt{2}}\uR\uSin(t) \\ \frac{i}{\sqrt{2}}R \Sin(t) & \uC(t)\\  \ema, \qquad t\in\R.
\end{equation}
\end{theorem}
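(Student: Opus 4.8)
The plan is to verify directly that the operator-valued matrix on the right-hand side of \eqref{eq:group} defines a $C_0$-group whose generator is $\frac{i}{\sqrt 2}\D$. Since we already know (from Stone's theorem, as recorded in the proof of Theorem \ref{thm:main}) that $\frac{i}{\sqrt 2}\D$ generates a $C_0$-group on $L^2(E,\mu)\oplus\ov{\Ran(\nabla_H)}$, it suffices to identify that group. First I would check that the matrix
$$
\mathcal U(t):=\bma \Cos(t) & \tfrac{i}{\sqrt 2}\uR\uSin(t) \\ \tfrac{i}{\sqrt 2}R\Sin(t) & \uC(t)\ema
$$
is well-defined and bounded on $L^2(E,\mu)\oplus\ov{\Ran(\nabla_H)}$: the diagonal entries are bounded by Stone's theorem and Lemma \ref{lem:ucosine}, and the off-diagonal entries are bounded because $R\Sin(t)=\uSin(t)R$ and $\Cos(t)\uR=\uR\uC(t)$ (Lemma \ref{lem:Rcommute}) together with the boundedness of the Riesz transforms $R$, $\uR$ coming from $\mathrm{sgn}(\D)$.

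Next I would check that $t\mapsto\mathcal U(t)$ is a $C_0$-group. Strong continuity and $\mathcal U(0)=I$ are immediate (the $(1,1)$ entry is $\Cos(0)=I$, the $(2,2)$ entry is $\uC(0)=I$ on $\ov{\Ran(\nabla_H)}$, and the off-diagonal entries vanish at $t=0$ since $\Sin(0)=0$). For the group law $\mathcal U(t)\mathcal U(s)=\mathcal U(t+s)$ I would multiply the matrices and reduce each of the four resulting entries to the classical cosine/sine addition formulas
$$
\Cos(t)\Cos(s)-\tfrac12\Sin(t)\Sin(s)\cdot(\ldots)=\Cos(t+s),\qquad \Cos(t)\Sin(s)+\Sin(t)\Cos(s)=\Sin(t+s),
$$
using the intertwining relations of Lemma \ref{lem:Rcommute} to move $R$ and $\uR$ past the cosines and sines, and crucially the identities $\tfrac12\uR R=P_{L_0^2(E,\mu)}$ and $\tfrac12 R\uR=P_{\ov{\Ran(\nabla_H)}}$ to collapse the products $\uR R$ and $R\uR$ that appear in the off-diagonal $\times$ off-diagonal terms. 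One must be slightly careful that $P_{L_0^2(E,\mu)}$ acts as the identity on $\Ran(\Sin(t))\subseteq\ov{\Ran(\sqrt{-\L})}=L_0^2(E,\mu)$, which is exactly what Lemma \ref{lem:null} provides; this is where the role of the nullspace $\C\one$ is handled cleanly.

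Finally I would identify the generator. Differentiating $\mathcal U(t)$ at $t=0$ in the strong sense on a suitable core — say vectors of the form $\binom{f}{\nabla_H g}$ with $f,g$ nice cylindrical functions — gives, using $\Cos'(0)=0$, $\Sin'(0)=\sqrt{-\L}$, $\uSin'(0)=\sqrt{-\uL}$, and $R\sqrt{-\L}=\sqrt{-\uL}R$ together with the defining relations $R\sqrt{-\L}f=\nabla_H f$ and $\uR\sqrt{-\uL}g=\nabla_H\s g$, the matrix $\bma 0 & \tfrac{i}{\sqrt2}\nabla_H\s \\ \tfrac{i}{\sqrt2}\nabla_H & 0\ema=\tfrac{i}{\sqrt2}\D$ (recall $B=I_H$ in the self-adjoint case). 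Since two $C_0$-groups with the same generator on a dense core — and with $\tfrac{i}{\sqrt2}\D$ already known to generate a $C_0$-group — must coincide, this yields \eqref{eq:group}. The main obstacle I anticipate is the bookkeeping in the group-law verification: getting the factors of $\tfrac{1}{\sqrt 2}$ and the signs right when the two off-diagonal entries multiply, and making sure each composition $\uR R$ or $R\uR$ lands on the correct subspace so that the projections in Lemma \ref{lem:Rcommute} genuinely act as identities there; everything else is routine functional calculus for the self-adjoint operators $\Cos(t)$, $\Sin(t)$.
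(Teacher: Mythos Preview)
Your proposal is correct and follows essentially the same approach as the paper: verify the group law for the matrix $\mathcal U(t)$ via the addition formulas together with the intertwining relations and projection identities of Lemma \ref{lem:Rcommute}, then identify the generator by differentiating at $t=0$. The only cosmetic difference is in the last step: the paper computes the derivative for $f\in\Dom(\L)$, $g\in\Dom(\uL)$ and then argues, via the general theory of cosine families, that the strong derivative exists precisely on $\Dom(\sqrt{-\L})\oplus\Dom(\sqrt{-\uL})=\Dom(\D)$, thereby identifying the generator's domain exactly; you instead invoke that $\tfrac{i}{\sqrt 2}\D$ is already known to be a generator and use that a generator has no proper generator extension, which is an equally valid route.
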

 By a scaling argument, this also gives a matrix representation for the group generated by $i\D$. 
\begin{proof}
On $L_0^2(E,\mu)\oplus\ov{\Ran(\nabla_H)}$ 
the group property follows by an easy computation using the lemmas and the addition formulas for 
 $C(t)$ and $S(t)$ and their underscored relatives  (see \cite[Formula (3.95)]{ABHN}).
On $\C\one\oplus \ov{\Ran(\nabla_H)}$ we argue similarly.

Strong continuity and uniform boundedness are evident from the corresponding properties of the matrix entries.
To see that its generator equals $i\D$, we set $G(t):= \bma \Cos(t) & \frac{i}{\sqrt{2}}\uR\uSin(t) \\ \frac{i}{\sqrt{2}}R \Sin(t) & \uC(t)\\  \ema$
 take $f\in \Dom(\L)$, $g\in \Dom(\uL)$, and differentiate. 
Lemma \ref{lem:Rcommute} then give us
\begin{align*} \lim_{t\downarrow 0} \frac1t\Big(G(t)\bma f\\ g\ema-\bma f\\ g\ema\Big) 
& = \frac{1}{\sqrt{2}}\bma 0 & i\uR\sqrt{-\uL } \\ iR \sqrt {-\L} & 0 \ema \bma f\\ g\ema 
\\ & =\frac{i}{\sqrt{2}}\bma 0 & \nabla_H \s \\ \nabla_H   & 0 \ema \bma f\\ g\ema
= \frac{i}{\sqrt{2}} \D  \bma f\\ g\ema. 
\end{align*}
This shows that $\frac{i}{\sqrt{2}}\D $ is an extension of the generator $\mathscr{G}$ of $(G(t))_{t\in\R}$. 
However, by the general theory of 
cosine families, the left-hand side limit exists  
if and only if $f\in \Dom(\sqrt {-\L})$ and $g\in \Dom(\sqrt{-\uL})$. In view of the domain identifications
$   \Dom(\sqrt {-\L}) = \Dom(\nabla_H )$ and $\Dom(\sqrt {-\uL}) = \Dom(\nabla_H \s)$ 
this precisely happens if and only if 
$\bma f \\ g \ema \in \Dom(\D )$. 
Therefore we actually have equality $\mathscr{G} = \frac{i}{\sqrt{2}}\D $.   
\end{proof}

We proceed with an analogue of Theorem \ref{thm:regularised}.

\begin{theorem}\label{thm:reg-cos}
 Let Assumption \ref{ass:mu-infty} hold and let $1<p<\infty$.
 If, for some $\lambda>0$ and $\a>0$, the operators $$(\lambda-\L)^{-\a}\cos(t\sqrt{-\L}), \quad t\in\R,$$ 
 extend to bounded operators on $L^p(E,\mu)$, then $p=2$ and $\L$ is self-adjoint. 
\end{theorem}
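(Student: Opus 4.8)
The plan is to follow the strategy of Theorem~\ref{thm:regularised}, inserting one extra ingredient, a Gaussian subordination step. As there, the goal is to upgrade the hypothesis to the statement that $(e^{t\L})_{t\ge0}$ extends holomorphically from $(0,\infty)$ to the open right half-plane, i.e.\ is analytic of angle $\tfrac12\pi$ on $L^p(E,\mu)$, which by the formula \eqref{eq:angle} for the optimal angle $\theta_p$ is impossible unless $p=2$ and $B=B\s$, i.e.\ $\L$ is self-adjoint. First I would use Assumption~\ref{ass:L-analytic} to note that the operators $(\lambda-\L)^{\a}e^{s\L}$ are bounded on $L^p(E,\mu)$ for every $s>0$, and compose them with the regularised cosine operators supplied by the hypothesis, interpreted by holomorphic extension exactly as in the discussion preceding Theorem~\ref{thm:regularised}. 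This yields that
$$ e^{s\L}\cos(t\sqrt{-\L}) \;=\; (\lambda-\L)^{\a}e^{s\L}\circ(\lambda-\L)^{-\a}\cos(t\sqrt{-\L}) $$
is a bounded operator on $L^p(E,\mu)$ for all $s>0$ and $t\in\R$.

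Next I would invoke the abstract Weierstrass formula. Starting from the scalar identity $\frac1{\sqrt{\pi u}}\int_0^\infty e^{-t^2/(4u)}\cos(t\xi)\,\dt=e^{-u\xi^2}$, valid for $\xi\ge0$ and $\Re u>0$, together with the functional calculus — rigorous when $p=2$ and $\L$ is self-adjoint, and transferred to the general case by a density argument on $L^p(E,\mu)\cap L^2(E,\mu)$ and the consistency of $(e^{t\L})_{t\ge0}$ across the $L^q$-scale — one obtains, for $\Re u>0$,
$$ \frac1{\sqrt{\pi u}}\int_0^\infty e^{-t^2/(4u)}\,e^{s\L}\cos(t\sqrt{-\L})\,\dt \;=\; e^{(s+u)\L}. $$
The left-hand side is holomorphic in $u$ on $\{\Re u>0\}$ and thus analytically continues $u\mapsto e^{(s+u)\L}$ from $(0,\infty)$ to $\{\Re u>0\}$; letting $s\downarrow0$ shows that $z\mapsto e^{z\L}$ extends holomorphically from $(0,\infty)$ to $\{\Re z>0\}$. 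This contradicts the fact, used in the proof of Theorem~\ref{thm:Hor-iL}, that for $p\neq2$ or $B\neq B\s$ the optimal angle of analyticity $\theta_p$ given by \eqref{eq:angle} is strictly less than $\tfrac12\pi$; hence $p=2$ and $\L$ is self-adjoint.

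The step I expect to be the main obstacle is the convergence of the subordination integral: for its right-hand side to make sense as an $\calL(L^p(E,\mu))$-valued (or strong) integral one needs $\n e^{s\L}\cos(t\sqrt{-\L})\n$ to grow at most exponentially (polynomial growth would already suffice) as $|t|\to\infty$, for each fixed $s>0$. I would obtain this either from growth control built into the definition of the regularised cosine operators — in the spirit of the ``bounded on bounded subsets'' condition in the discussion preceding Theorem~\ref{thm:regularised} — or by a direct argument via the vector-valued Laplace transform: for $f$ in a suitable dense subspace, $\mu\mapsto\int_0^\infty e^{-\mu t}e^{s\L}\cos(t\sqrt{-\L})f\,\dt$ represents $e^{s\L}\,\mu(\mu^2-\L)^{-1}f$, which is holomorphic off the spectral sector of $\L$, and the standard Laplace-transform correspondence then pins down the exponential bound. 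Once the growth bound is secured, the rest is routine. As alternatives to the last step one could observe that the argument in fact shows $\L$ generates a $C_0$-cosine family on $L^p(E,\mu)$ and apply the implication (iv)$\Rightarrow$(v) of Theorem~\ref{thm:main}; and the set of $t$ for which the regularised cosine operators are defined can be enlarged from full measure to all of $\R$ by the argument given after Theorem~\ref{thm:Hor-iL}.
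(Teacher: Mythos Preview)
Your approach via the abstract Weierstrass (Gaussian subordination) formula is a genuinely different route from the paper's. The paper instead stays at the level of $\sqrt{-\L}$: from the regularised cosines it manufactures regularised sines by the \emph{finite} integral
\[
(\lambda-\L)^{-(\a+\frac12)}\sin(t\sqrt{-\L}) \;=\; \int_0^t (\lambda-\L)^{-\frac12}\sqrt{-\L}\,(\lambda-\L)^{-\a}\cos(s\sqrt{-\L})\,\ds,
\]
combines them into a regularised wave group $(\lambda-\L)^{-(\a+\frac12)}e^{it\sqrt{-\L}}$, and then composes with $(\lambda-\L)^{\a+\frac12}e^{s\sqrt{-\L}}$ to obtain a holomorphic extension of $z\mapsto e^{z\sqrt{-\L}}$ to $\{\Re z>0\}$. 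This forces $\omega(\sqrt{-\L})=0$, hence $\omega(-\L)=2\,\omega(\sqrt{-\L})=0$, and one concludes as you do. The key advantage of this route is that it only ever integrates over bounded intervals, so the ``bounded on bounded subsets'' interpretation of the hypothesis (as in the discussion preceding Theorem~\ref{thm:regularised}) is all that is needed.

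By contrast, the step you flag as the main obstacle is a real gap. Your subordination integral $\int_0^\infty e^{-t^2/(4u)}\,e^{s\L}\cos(t\sqrt{-\L})\,\dt$ needs \emph{global} (at most exponential) growth of $t\mapsto\|e^{s\L}\cos(t\sqrt{-\L})\|$, and neither of your proposed fixes supplies this: ``bounded on bounded subsets'' is purely local, and the Laplace-transform argument is circular, since writing down $\int_0^\infty e^{-\mu t}\,e^{s\L}\cos(t\sqrt{-\L})f\,\dt$ already presupposes the very growth bound you want to extract. Your alternative of invoking (iv)$\Rightarrow$(v) in Theorem~\ref{thm:main} also does not close the gap, because from the hypothesis you obtain only \emph{regularised} cosines $e^{s\L}\cos(t\sqrt{-\L})$, not a genuine $C_0$-cosine family generated by $\L$. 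The paper's finite-integration detour through $\sqrt{-\L}$ is precisely the device that circumvents this growth issue.
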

\begin{proof}
As the proof follows the ideas of that of Theorem \ref{thm:regularised}, we only sketch the main lines
and leave the details to the reader.

If the operators $(\lambda-\L)^{-\a}\cos(t\sqrt{-\L})$, $t\in\R$, are bounded on 
$L^p(E,\mu)$, then so are the operators
 $$(\lambda-\L)^{-(\a+\frac{1}{2})}\sin(t\sqrt{-\L}) = \int_0^t (\lambda-\L)^{-(\a+\frac{1}{2})}\sqrt{-\L}\cos(s\sqrt{-\L})\,\ds,$$ 
as well as 
$$ (\lambda-\L)^{-(\a+\frac{1}{2})}e^{it\sqrt{-\L}} := (\lambda-\L)^{-(\a+\frac{1}{2})}[\cos(t\sqrt{-\L})  
+ i \sin(t\sqrt{-\L})] .$$
Then also the operators
$$ e^{(s+it)\sqrt{-\L}} =  (\lambda-\L)^{(\a+\frac{1}{2})}e^{s\sqrt{-\L}} \circ (\lambda-\L)^{-(\a+\frac{1}{2})}e^{it\sqrt{-\L}}
$$ are bounded on $L^p(E,\mu)$, in the sense that the right-hand side 
defines a holomorphic extension of the semigroup $(e^{t\sqrt{-\L}})_{t\ge 0}$ 
to the right half-plane $\{\Re z>0\}$. This means that $\sqrt{-\L}$ is sectorial of angle zero.
But $-\L$ is sectorial as well, and, by the general theory of sectorial operators, the angles
of sectoriality are related by $\omega(\sqrt{-\L}) = \frac12 \omega(-\L)$ (see, e.g., 
\cite[Theorem 15.16]{KuWe}). It follows that $-\L$ is sectorial of angle zero.
As we have seen in the proof of Theorem \ref{thm:Hor-iL}, 
this is false unless $p=2$ and $\L$ is self-adjoint.
\end{proof}

Concerning exponential regularisation by $e^{s\L}$, analogous observations as in
Remark \ref{rem:expreg} can be made. We leave this to the interested reader.

\section{Speed of propagation}\label{sec:finite-speed}

It will be useful to make the natural identification
$$ L^2(E,\mu)\oplus L^2(E,\mu;H) = L^2(E,\mu;\C\oplus H).$$
The {\em support} of an element $u = (f,g) \in L^2(E,\mu)\oplus L^2(E,\mu;H)$
will always be understood as the support of the corresponding element in $L^2(E,\mu;\C\oplus H)$.
Thus, $\supp(u) = \supp(f)\cup \supp(g)$.

\begin{definition}\label{def:finite-speed}  Let $\calH$ be any Hilbert space. We say 
that a one-parameter family $(T_t)_{t\in \R}$ 
of bounded operators on $L^2(E,\mu;\calH)$ 
has {\em speed of propagation $\kappa$} if the following holds. 
For all closed subsets $K$ of $E$, all $u\in L^2(E,\mu;\calH)$, and all $t\in\R$,  we have
$$\supp (u) \subseteq K \ \Longrightarrow \ \supp (T_tu) \subseteq K_{\kappa|t|}$$
where $$K_{\kappa|t|}:= \{x\in E: \ \dist(x,K) \le \kappa|t|\}.$$
The family $(T_t)_{t\in \R}$ is said to have {\em infinite speed of propagation} if it does
not propagate at any finite speed.
\end{definition}

In the above, $\dist(x,K) = \inf\{\n x-y\n: \ y\in K\}$. 
Note that $(T_t)_{t\in \R}$ has speed of propagation $\kappa$ if and only if for all subsets $K$ of $E$ and all  
$u, u'\in L^2(E,\mu;\calH)$ 
with supports in $K$ and $\complement  K_{\kappa|t|}$ respectively, 
we have $$\lb T_tu, u'\rb = 0,$$ the brackets denoting the inner product of $L^2(E,\mu;\calH)$.

In the next proposition we consider the case $E = \R^d = H$ and $A = \frac12 I$. the resulting operator 
$\L$ is called the {\em classical Ornstein-Uhlenbeck operator} and is given explicitly as
\begin{align}\label{eq:classOU} 
\L = \frac12\Delta - \frac12 x\cdot\nabla
\end{align}
and the associated invariant measure is the standard Gaussian measure $\gamma$ on $\R^d$,
$$ {\rm d}\gamma(x) = \frac1{(2\pi)^{d/2}}\exp(-\frac12|x|^2)\,{\rm d}x.$$
The semigroup generated  by $\L$ is given by 
 $$ e^{t\L} f(x) = \int_{\R^d} M_t(x,y)f(y)\,{\rm d}y,$$
 where $M$ is the {\em Mehler kernel},
 $$ M_t(x,t) = \frac1{(2\pi)^{d/2}}(1-e^{-2t})^{-d/2}\exp\Big(\displaystyle\!-\!\frac12\frac{|e^{-t} x - y|^2}{1-e^{-2t}}\Big).$$

The following theorem is the Ornstein-Uhlenbeck analogue of the classical fact that 
the Schr\"odinger group $(e^{it\Delta})_{t\in\R}$ on $L^2(\R^d)$ has infinite speed of propagation.

 \begin{theorem}
 Let $\L$ be the classical Ornstein-Uhlenbeck operator on $L^2(\R^d,\gamma)$.
 The $C_0$-group $(e^{it\L})_{t\in\mathbb{R}}$ generated by $i\L$
 has infinite speed of propogation. 
\end{theorem}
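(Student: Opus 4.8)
The plan is to exploit the explicit Mehler kernel representation of $e^{it\L}$ together with the fact that $e^{it\L}$ is a bounded (indeed unitary, in a suitable sense on $L^2_0$) operator, and to show that for any fixed $t\neq 0$ the operator $e^{it\L}$ maps a function supported in a small set to a function whose support is all of $\R^d$ (up to a null set). The Mehler semigroup $e^{z\L}$ is given for $\Re z>0$ by an integral kernel $M_z(x,y)$ which, as a function of $z$, extends holomorphically; the kernel $M_z(x,y) = (2\pi)^{-d/2}(1-e^{-2z})^{-d/2}\exp\!\big(\!-\!\tfrac12 \tfrac{|e^{-z}x-y|^2}{1-e^{-2z}}\big)$ makes sense for all complex $z$ with $1-e^{-2z}\neq 0$, and the relevant issue is whether, for $z=it$ with $t\in\R\setminus\pi\Z$ say, this formula actually represents the unitary group element $e^{it\L}$. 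First I would establish exactly this: that $e^{it\L}f$ is given by $\int_{\R^d} M_{it}(x,y) f(y)\,{\rm d}y$ for $f$ in a suitable dense class (e.g. Hermite polynomials, or Schwartz functions times the Gaussian weight), by analytic continuation from $\Re z>0$; here one uses that both sides are holomorphic in $z$ on $\{\Re z>0\}$, agree there, and the right-hand side continues analytically past the imaginary axis as long as $1-e^{-2z}$ stays away from $0$, which excludes only $z\in i\pi\Z$.

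Once the kernel representation is in hand, the argument is essentially the same as in the Euclidean Schr\"odinger case. Fix $t\in\R$ with $0<|t|$ and, to avoid the degenerate values, first handle $t\notin\pi\Z$. Take $f$ a nonzero nonnegative smooth function supported in a small ball $K=\bar B(x_0,r)$; then for $x$ in an arbitrary given compact set, $e^{it\L}f(x) = \int_K M_{it}(x,y)f(y)\,{\rm d}y$. The phase function $y\mapsto -\tfrac12\tfrac{|e^{-it}x-y|^2}{1-e^{-2it}}$ is a nonconstant (indeed quadratic with nondegenerate, nonreal coefficient) analytic function of $y$, so $y\mapsto M_{it}(x,y)$ is a nonvanishing real-analytic (in fact entire in $y$) function, and the integral against the compactly supported $f\ge 0$, $f\not\equiv 0$, cannot vanish identically in $x$ — more strongly, $x\mapsto e^{it\L}f(x)$ is itself real-analytic on $\R^d$ (differentiate under the integral sign, the kernel being smooth and with all derivatives integrable against $f$ on the compact set $K$) and not identically zero, hence its zero set has measure zero. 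Therefore $\supp(e^{it\L}f) = \R^d$, which is not contained in $K_{\kappa|t|}$ for any finite $\kappa$ once $r$ is small enough and $\kappa|t|$ is fixed — and since this works for arbitrarily small $r$ with $t$ fixed, no finite speed $\kappa$ can work. For the exceptional values $t\in\pi\Z\setminus\{0\}$, one notes that $e^{it\L}$ for such $t$ is a bounded operator which, by the group property, satisfies $e^{it\L} = e^{i(t-s)\L}e^{is\L}$ for any $s$; picking $s$ small and irrational multiple of $\pi$ so that $t-s\notin\pi\Z$ as well, and using that $e^{is\L}$ already spreads supports to all of $\R^d$, one concludes $e^{it\L}$ does too (a bounded operator applied to a function with full support need not have full support, so more care is needed here — see below).

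The main obstacle I expect is precisely the degenerate times and, relatedly, making the "spreading" statement robust under composition. The cleanest route is probably to avoid arguing about exceptional $t$ separately: instead, prove the contrapositive directly. Suppose $(e^{it\L})_{t\in\R}$ had finite speed of propagation $\kappa$. Then for every $r>0$ and every $t$ with $\kappa|t|<{\rm dist}$ between two small balls $K$ and $K'$, we would have $\lb e^{it\L}u,u'\rb=0$ for $u$ supported in $K$ and $u'$ supported in $K'$. Fix such disjoint balls $K,K'$ and let $t$ range over the interval $(-\delta,\delta)$ with $\kappa\delta<{\rm dist}(K,K')$; then $t\mapsto \lb e^{it\L}u,u'\rb$ vanishes on a whole interval. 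But $t\mapsto \lb e^{it\L}u,u'\rb$ extends to a function holomorphic on $\{\Re z>0\}$ (namely $z\mapsto\lb e^{z\L}u,u'\rb$, analytic by analyticity of the semigroup) with continuous boundary values on $i\R$; vanishing on an interval of the boundary of a domain where it is the boundary trace of a holomorphic function forces it to vanish identically, hence $\lb e^{z\L}u,u'\rb=0$ for all $\Re z\ge 0$, in particular $\lb e^{s\L}u,u'\rb=0$ for all $s>0$. This is false: for $s>0$, $e^{s\L}u(x)=\int M_s(x,y)u(y)\,{\rm d}y>0$ for all $x$ when $u\ge0$, $u\not\equiv0$ (the Mehler kernel is strictly positive), so pairing with a nonnegative nonzero $u'$ gives a strictly positive number. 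This contradiction — which uses only positivity of the real Mehler semigroup plus the boundary-uniqueness theorem for holomorphic functions — finishes the proof, and it sidesteps both the explicit form of $M_{it}$ and the exceptional times, so I would present the argument this way rather than via the direct kernel computation sketched above.
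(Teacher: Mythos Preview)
Your argument is correct and takes a genuinely different route from the paper's. The paper works at the single time $t_0=\pi/2$: it analytically continues the Mehler kernel to write $e^{it_0\L}f(x)=(4\pi)^{-d/2}\int_{\R^d}\exp(-\tfrac14|ix-y|^2)f(y)\,{\rm d}y$ for compactly supported $f$, and then checks directly that $\lb e^{it_0\L}f_m,g_n\rb\to(4\pi)^{-d/2}\exp(-\tfrac14|ix_0-y_0|^2)\neq 0$ as $f_m,g_n$ shrink to normalised indicators of balls around $x_0,y_0$ with $|x_0-y_0|>R$ arbitrary. Your final argument instead assumes finite speed $\kappa$, observes that the bounded holomorphic function $z\mapsto\lb e^{z\L}u,u'\rb$ on $\{\Re z>0\}$ then has continuous boundary values vanishing on an interval of $i\R$, applies Schwarz reflection (or $H^\infty$ boundary uniqueness) to force it to vanish identically, and contradicts this with the strict positivity of the real-time Mehler kernel. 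Your approach is more conceptual and more portable: it uses only that $\L$ is self-adjoint (hence the semigroup is analytic of angle $\pi/2$ and contractive on the closed half-plane) and that $(e^{s\L})_{s>0}$ is positivity improving, so it transfers verbatim to any self-adjoint generator with a strictly positive heat kernel. The paper's computation, by contrast, exhibits an explicit nonzero pairing at a specific time but relies on the closed form of the Mehler kernel. Your first sketch via real-analyticity of $x\mapsto e^{it\L}f(x)$ is essentially the paper's method carried further; you were right to abandon it in favour of the uniqueness argument, which sidesteps the exceptional times $t\in\pi\Z$ and the composition issue you flagged.
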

\begin{proof}
 It suffices to show that, for some given $t_0>0$, and any $R>0$, there exist compactly supported 
 functions $f,g\in L^2(\R^d,\gamma)$
 whose supports are separated at least by a distance $R$, and which satisfy $\lb e^{it_0 \L}f,g\rb \not=0$.
 
 We take $t_0 := \pi/2$. On the one hand,  by \cite[Proposition 3.9.1]{ABHN} we have 
 $e^{it_0\L}f = \lim_{s\downarrow 0} e^{(s+it_0)\L}f$ in $L^2(\R^d,\gamma)$. 
 On the other hand, for almost all $x\in\R^d$ we have
 \begin{align*} e^{(s+it_0)\L}f(x) & = \int_{\R^d} M_{s+it_0}(x,y)f(y)\,{\rm d}y
 \\ & =\frac1{(2\pi)^{d/2}}{(1+e^{-2s})^{-d/2}} \int_{\R^d} {\exp\Big(\displaystyle\!-\!\frac12\frac{|ie^{-s} x - y|^2}{1+e^{-2s}}\Big)}f(y) \,{\rm d}y
 \end{align*}
 by analytic continuation. For compactly supported $f$ we may use dominated convergence to pass to the limit
 for $s\downarrow 0$ and obtain, for almost all $x\in\R^d$,
 \begin{align*} e^{it_0\L}f(x)
 = (4\pi)^{-d/2}\int_{\R^d} \exp\big(\!-\!\frac14|ix - y|^2\big)f(y) \,{\rm d}y
 \end{align*}
Fix arbitrary $x_0,y_0$ in $\R^d$ satisfying $|x_0-y_0|>R$ 
and let $f_m := \frac{\one_{B(x_0,\frac1m)}}{{|B(x_0,\frac1m)|}}$
and $g_n := \frac{\one_{B(y_0,\frac1n)}}{{|B(y_0,\frac1n)|}}$ for $n,m \in \N$. Then, by continuity, 
\begin{align*}
\lim_{m,n\to\infty} \lb e^{it_0 \L}f_m,g_n\rb 
& = \lim_{m,n\to\infty} (4\pi)^{-d/2}\int_{\R^d} \int_{\R^d} \exp\big(\!-\!\frac14|ix - y|^2\big)f_m(y)g_n(x) \,{\rm d}y\,{\rm d}x
\\ & = (4\pi)^{-d/2}\exp\big(\!-\!\frac14|ix_0 - y_0|^2\big) \not=0.
\end{align*}
It follows, by taking $n,m$ large enough, that $\lb e^{it_0 \L}f_m,g_n\rb \not=0$ , while the supports of $f_{m}$ and $g_{n}$ are 
separated by a distance $\tilde{R}\ge R$.
\end{proof}
\

Using the identity
$$e^{z\sqrt{-\L}}f  =  \frac{1}{\sqrt{\pi}} \int_0^\infty
 \frac{e^{-u}}{\sqrt{u}} \exp\big(\frac{z^2}{4u}\L\big)f \,du, \quad \Re z \ge  0,$$
by a similar argument one shows that the $C_0$-group $(e^{it\sqrt{-\L}})_{t\in\mathbb{R}}$ generated by $i\sqrt{-\L}$
 has infinite speed of propogation. 
 
The main result of this section provides conditions under which the $C_0$-group  $(e^{it\D})_{t\in\R}$  generated by 
$i\D$ has finite speed of propagation;  as an immediate corollary, the cosine family
$(\cos(t\sqrt{-\L})_{t\in\R}$ has finite speed of propagation. 
In addition to Assumption \ref{ass:mu-infty} we need an assumption on
the reproducing kernel Hilbert space $H_\mu$ associated with the Gaussian measure 
$\mu$. Recall that this is the Hilbert space completion of $\Ran(Q_\mu)$, 
where $Q_\mu$ is the covariance operator of $\mu$,
with respect to the norm 
\begin{align}\label{eq:Qmu}
\n Q_\mu x\s\n_{H_\mu}^2 := \lb Q_\mu x\s,x\s\rb \quad  \forall x^{*} \in E^{*} .
\end{align}
 This completion embeds continuously
into $E$. Denoting by $i_\mu: H_\mu\to E$ the embedding mapping, we have  $i_\mu\circ i_\mu^*{} = Q_\mu$. 
For more information we refer the reader to \cite{Bo, Nee-Can}.

\begin{assumption}\label{ass:L-spectral-gap}
$H_\mu$ is densely contained in $H$.
\end{assumption}

By an easy closed graph argument the inclusion mapping $i_{\mu,H}:H_\mu\to H$ is bounded. 

The relevance of this Assumption lies in the fact 
that $H_\mu$ is contained in $H$ if and only if $\L$ has a spectral gap, which in turn is equivalent 
to the validity of the following Poincar\'e inequality: for some (equivalently, for all) $1<p<\infty$
there is a constant $C_p$ such that for all $f\in\Dom_p(\nabla_H)$,
\begin{align}\label{eq:Poincare}\n f - \bar f\n_p \le C_p \n \nabla_H f\n_p,
\end{align}
 with $\bar f:= \int_E f\,\dmu$ (see \cite{CG02, GolNee, NeePoinc}).

If $H_{\mu} \subseteq H$, then the inclusion is dense if and only if $\nabla_H$ is closable
as a densely defined operator from $L^p(E,\mu)$ to $L^p(E,\mu;H)$ for some/all $1\le p<\infty$ \cite[Corollary 4.2]{GGN}.
These equivalent conditions are satisfied if
the semigroup $S$ restricts to a $C_0$-semigroup on $H$ \cite[Theorem 3.5]{GolNee}; the latter is the case
if Assumption \ref{ass:L-analytic} is satisfied \cite[Theorem 3.3]{MaaNee11}.

Now we are ready to state the main result of this section.

\begin{theorem}\label{thm:MainThm} Let Assumptions \ref{ass:mu-infty} and \ref{ass:L-spectral-gap}
hold and let $\L$ be self-adjoint. 
Then the group $(e^{it\D})_{t\in\mathbb{R}}$ on $L^2(E,\mu)\oplus L^2(E,\mu;H)$ 
propagates, at most, with speed $\n i_H\n_{\calL(H,E)}$.
\end{theorem}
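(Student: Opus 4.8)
Set $\kappa:=\n i_H\n_{\calL(H,E)}$. The plan is to prove directly that the unitary group $(e^{it\D})_{t\in\R}$ itself propagates with speed $\kappa$; the statement for the cosine family $(\cos(t\sqrt{-\L}))_{t\in\R}$ then follows, after the obvious rescaling, by reading off the upper left entry of the matrix in \eqref{eq:group}. Since $\L$ is self-adjoint we have $B=I_H$, so $\D=\bma 0 & \nabla_H\s \\ \nabla_H & 0\ema$ is self-adjoint on $L^2(E,\mu)\oplus L^2(E,\mu;H)=L^2(E,\mu;\C\oplus H)$ and $i\D$ generates a unitary group by Stone's theorem. By the criterion recorded just after Definition \ref{def:finite-speed}, it suffices to show that $\supp u\subseteq K$ implies $\supp(e^{it\D}u)\subseteq K_{\kappa|t|}$ for every closed $K\subseteq E$. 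By an identical argument with $-i\D$ in place of $i\D$ we may take $t>0$, and by density of $\Dom(\D)$ we may take $u\in\Dom(\D)$, so that $v(t):=e^{it\D}u$ is a $C^1$ curve in $L^2(E,\mu;\C\oplus H)$ with $v(t)\in\Dom(\D)$ and $v'(t)=i\D v(t)$ for all $t$.

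The key ingredient is a commutator estimate: if $\psi\colon E\to\R$ is bounded and Lipschitz with Lipschitz constant $L$ (for the norm of $E$), then multiplication by $\psi$ maps $\Dom(\nabla_H)$ into itself and, by duality, $\Dom(\nabla_H\s)$ into itself, hence $\Dom(\D)$ into itself, and
\begin{equation*}
 [\D,\psi]=\bma 0 & -T_\psi\s \\ T_\psi & 0\ema,
\end{equation*}
where $T_\psi$ denotes pointwise multiplication by the $H$-valued function $\nabla_H\psi\in L^\infty(E;H)$, which satisfies $\n\nabla_H\psi(x)\n_H\le \kappa L$ for $\mu$-a.e.\ $x$. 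The pointwise bound is elementary: for $h\in H$ the derivative of $\psi$ in the direction $i_Hh\in E$ has absolute value at most $L\n i_Hh\n_E\le \kappa L\n h\n_H$. The substantive point is that $\psi$ genuinely belongs to the relevant domains and that the Leibniz rule $\nabla_H(\psi f)=\psi\nabla_H f+(\nabla_H\psi)f$ holds; this is obtained by approximation — smoothing $\psi$ with the Ornstein--Uhlenbeck semigroup (which preserves Lipschitz bounds along $H$, since $S$ restricts to a $C_0$-semigroup on $H$) and then projecting cylindrically produces cylindrical approximants with $\nabla_H$-gradients uniformly bounded by $\kappa L$ in $L^\infty(E;H)$, and here Assumption \ref{ass:L-spectral-gap}, through the density of $H_\mu$ in $H$, is what makes the scheme work. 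We will apply this with $\psi=\beta\circ\rho$, where $\rho:=\dist(\cdot,K)$ is $1$-Lipschitz on $E$ and $\beta$ is the smooth profile function fixed below.

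Fix a smooth nondecreasing $\beta\colon\R\to[0,\infty)$ with $\beta\equiv 0$ on $(-\infty,0]$, $\beta>0$ on $(0,\infty)$, and $\n\beta'\n_\infty<\infty$, and for $t\ge 0$ put
\begin{equation*}
 \Phi(t):=\bigl\lb\beta(\rho-\kappa t)\,v(t),\,v(t)\bigr\rb=\int_E\beta\bigl(\rho(x)-\kappa t\bigr)\,|v(t,x)|^2\,\dmu(x)\ \ge 0.
\end{equation*}
Then $\Phi(0)=0$, since $u$ is supported in $K=\{\rho=0\}$ and $\beta(0)=0$. As $t\mapsto\beta(\rho-\kappa t)$ is Lipschitz from $\R$ into $\calL(L^2(E,\mu;\C\oplus H))$ with derivative multiplication by $-\kappa\beta'(\rho-\kappa t)$, and $t\mapsto v(t)$ is $C^1$, the product rule gives
\begin{equation*}
 \Phi'(t)=-\kappa\int_E\beta'(\rho-\kappa t)\,|v|^2\,\dmu+2\,\Re\bigl\lb\beta(\rho-\kappa t)\,v,\,i\D v\bigr\rb.
\end{equation*}
Using the skew-adjointness of $i\D$ and that $\beta(\rho-\kappa t)$ is a self-adjoint multiplier, the last term equals $i\lb v,[\D,\beta(\rho-\kappa t)]v\rb$, so that, writing $v=(f,g)$ and using $\nabla_H(\beta(\rho-\kappa t))=\beta'(\rho-\kappa t)\nabla_H\rho$ together with $\n\nabla_H\rho\n_H\le\kappa$ a.e.\ and $\beta'\ge 0$, it is bounded in absolute value by
\begin{equation*}
 2\int_E\kappa\,\beta'(\rho-\kappa t)\,|f|\,\n g\n_H\,\dmu\le\kappa\int_E\beta'(\rho-\kappa t)\,|v|^2\,\dmu.
\end{equation*}
Hence $\Phi'(t)\le 0$ for all $t>0$; combined with $\Phi(0)=0$ and $\Phi\ge 0$ this forces $\Phi\equiv 0$, and since $\beta>0$ on $(0,\infty)$ we conclude $v(t,x)=0$ for $\mu$-a.e.\ $x$ with $\rho(x)>\kappa t$, i.e.\ $\supp v(t)\subseteq K_{\kappa t}$, which is the assertion.

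The main obstacle is the commutator lemma of the second paragraph, and within it the claim that $\beta\circ\dist(\cdot,K)$ lies in $\Dom(\nabla_H)$ with the sharp bound $\n\nabla_H(\beta\circ\rho)\n_{L^\infty(E;H)}\le\kappa\n\beta'\n_\infty$ for an \emph{arbitrary} closed set $K\subseteq E$ (which need not be cylindrical). This is the only step where the infinite-dimensional Gaussian geometry genuinely enters, and the only place Assumption \ref{ass:L-spectral-gap} is used; granting it, finite speed of propagation is a one-line Gronwall estimate. The remaining ingredients — the reduction to $u\in\Dom(\D)$, the $C^1$-dependence of $v$, and the skew-adjointness bookkeeping producing the commutator term — are routine.
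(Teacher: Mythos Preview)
Your argument is correct and takes a genuinely different route from the paper. Both proofs isolate the same hard analytic ingredient --- that a bounded Lipschitz function on $E$ belongs to $\Dom(\nabla_H)$ with $\n\nabla_H\psi\n_\infty\le\kappa L$, which is Theorem~\ref{thm:LipH} in the Appendix and the only place Assumption~\ref{ass:L-spectral-gap} enters --- and both reduce finite speed of propagation to a commutator bound $\n[\psi,\D]\n\le\n\nabla_H\psi\n_\infty$ (Lemma~\ref{lem:comm}). The difference is in how that commutator bound is exploited. The paper follows the M$^{\rm c}$Intosh--Morris strategy: one iterates the commutator identity $[\eta,e^{it\D}]=it\int_0^1 e^{ist\D}[\eta,\D]e^{i(1-s)t\D}\,\ds$, using the vanishing of the double commutator $[\eta,[\eta,\D]]$, to obtain $|\lb e^{it\D}u,v\rb|\le|t|^n\n\nabla_H\eta\n_\infty^n\n u\n_2\n v\n_2$ for all $n$, forcing the pairing to zero when $|t|<1/\n\nabla_H\eta\n_\infty$. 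Your argument is the classical hyperbolic energy method: with the moving weight $\beta(\rho-\kappa t)$ you compute $\Phi'(t)$, the skew-adjointness of $i\D$ converts the derivative term into the single commutator $[\D,\beta(\rho-\kappa t)]$, and AM--GM against the explicit first term gives $\Phi'\le 0$. Your route is more elementary --- it avoids the iteration and the double-commutator identity entirely --- while the paper's approach is more operator-theoretic and yields the quantitative estimate $|\lb e^{it\D}u,v\rb|\le(|t|\n\nabla_H\eta\n_\infty)^n$ as a byproduct. One small technical point: you should take $\beta$ bounded (say $\beta\to 1$ at $+\infty$), since Theorem~\ref{thm:LipH} requires the multiplier to be uniformly bounded and you need $\beta(\rho-\kappa t)v\in L^2$; this costs nothing, as only $\beta>0$ on $(0,\infty)$ is used in the final step. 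Likewise, your ``$\nabla_H\rho$'' should be read as the a.e.\ pointwise G\^ateaux derivative $D_H\rho$ (which exists $\mu$-a.e.\ with $\n D_H\rho\n_H\le\kappa$ by the argument behind Theorem~\ref{thm:LipH}), since $\rho$ itself is unbounded and need not lie in $\Dom(\nabla_H)$; the chain rule $\nabla_H(\beta\circ\rho)=\beta'(\rho)\,D_H\rho$ then holds pointwise a.e.\ and is what your estimate actually uses.
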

 
Our proof of this theorem follows an argument of Morris and M$^{\rm c}$Intosh \cite{McIMor}, which 
in turn is a group analogue of a similar resolvent argument in \cite{AKM}. 
The main difficulty in carrying over the proof to the present situation is to prove 
that suitable Lipschitz functions belong to $\Dom(\nabla_H)$.
 
We begin with some lemmas. It will be understood that 
the assumptions of Theorem \ref{thm:MainThm} are satisfied, although not all assumptions are needed in each lemma.

\begin{lemma}\label{lem:comm} For all real-valued 
$\eta \in\Dom(\nabla_H)$ satisfying $\nabla_H \eta \in L^\infty(E,\mu;H)$
and all $u = (f,g)\in\Dom(\D)$
we have $\eta u = (\etaA f, \etaB g)\in\Dom(\D)$ and the commutator $$[\eta,\D]: u \mapsto \eta \D u - \D(\eta u)$$
extends to a bounded operator on $L^2(E,\mu)\oplus L^2(E,\mu;H)$ 
with norm $$ \n  [\eta ,\D]\n \le \n \nabla_H \eta\n_\infty.$$
This operator is local, with support contained in the support of $\eta$, in the sense that 
$[\eta,\D]u = 0$ whenever ${\rm supp}(u)\cap {\rm supp}(\eta) = \emptyset$. 
Furthermore, $$[\eta,[\eta, \D]]=0.$$
\end{lemma}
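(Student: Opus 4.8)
The plan is to compute the commutator $[\eta,\D]$ explicitly using the matrix form of $\D$ and the product rule for $\nabla_H$, and then verify all three assertions (domain, norm bound and locality; commutator with $\eta$) directly from that formula. First I would check the product rule: for $\eta$ real-valued in $\Dom(\nabla_H)$ with $\nabla_H\eta\in L^\infty(E,\mu;H)$ and $f\in\Dom(\nabla_H)$, one has $\eta f\in\Dom(\nabla_H)$ with $\nabla_H(\eta f)=\eta\nabla_H f+f\,\nabla_H\eta$; this is first verified for $C^1_{\rm b}$-cylindrical functions, where it is a pointwise chain-rule computation, and then extended by a closure argument, using $\nabla_H\eta\in L^\infty$ to control the term $f\,\nabla_H\eta$ in $L^2(E,\mu;H)$ and boundedness of $\eta$ (which follows since $\eta$ and $\nabla_H\eta$ are in the right spaces — one may need the Poincar\'e inequality \eqref{eq:Poincare} here, or simply note $\eta\in L^2\cap$ a suitable higher integrability coming from $\nabla_H\eta\in L^\infty$ and $\mu$ Gaussian, via e.g. a logarithmic Sobolev / Gaussian concentration estimate). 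A symmetric statement holds for $g\in\Dom(\nabla_H\s)$ and the adjoint gradient: $\eta g\in\Dom(\nabla_H\s)$ with $\nabla_H\s(\eta g)=\eta\nabla_H\s g - \lb\nabla_H\eta,g\rb_H$, obtained by duality from the product rule for $\nabla_H$.

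Granting this, for $u=(f,g)\in\Dom(\D)$, i.e.\ $f\in\Dom(\nabla_H)$ and $Bg\in\Dom(\nabla_H\s)$, we get $\eta f\in\Dom(\nabla_H)$ and (since $B$ commutes with multiplication by the scalar function $\eta$ up to the correction term, or more simply since $B\in\calL(H)$ acts fibrewise and $\eta$ is scalar) $B(\eta g)=\eta Bg\in\Dom(\nabla_H\s)$, so $\eta u\in\Dom(\D)$. Then a direct computation gives
\begin{align*}
[\eta,\D]u = \eta\D u - \D(\eta u) = \bma \eta\nabla_H\s Bg - \nabla_H\s(\eta Bg)\\ \eta\nabla_H f - \nabla_H(\eta f)\ema
= \bma \lb\nabla_H\eta, Bg\rb_H \\ -f\,\nabla_H\eta\ema.
\end{align*}
Both entries are multiplication operators against $\nabla_H\eta\in L^\infty(E,\mu;H)$, so $[\eta,\D]$ extends to a bounded operator on $L^2(E,\mu)\oplus L^2(E,\mu;H)$; for the norm bound, in the first entry $\|\lb\nabla_H\eta,Bg\rb_H\|_2\le\|\nabla_H\eta\|_\infty\|Bg\|_2$, and in the second $\|f\,\nabla_H\eta\|_2\le\|\nabla_H\eta\|_\infty\|f\|_2$; combining the two orthogonal components (noting $B$ does not appear in the operator norm once one uses that $\sqrt 2 \mathrm{sgn}(\D)$ is an isometry on $\overline{\Ran(\nabla_H)}$, or rather arguing coordinatewise and observing the worst case) yields $\|[\eta,\D]\|\le\|\nabla_H\eta\|_\infty$. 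Here I should be a little careful about the factor involving $B$: since $B+B\s=2I$ and $\|B\|$ may exceed $1$, the clean bound $\|\nabla_H\eta\|_\infty$ requires the self-adjoint hypothesis $B=I_H$ of Theorem \ref{thm:MainThm}, which is in force; with $B=I_H$ the first entry is exactly $\lb\nabla_H\eta,g\rb_H$ and the estimate is immediate.

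Locality is clear from the explicit formula: if $\supp(u)\cap\supp(\eta)=\emptyset$ then on $\supp(\eta)$ we have $f=g=0$ a.e., so both entries $\lb\nabla_H\eta,g\rb_H$ and $f\,\nabla_H\eta$ vanish a.e.\ (they are supported in $\supp(\eta)$ where $u$ vanishes), hence $[\eta,\D]u=0$. Finally, for the double commutator: since $[\eta,\D]$ is (as just computed) the bounded operator of ``multiplication by the fixed element $\nabla_H\eta$'' in each coordinate — acting as $(f,g)\mapsto(\lb\nabla_H\eta,g\rb_H,\,-f\nabla_H\eta)$ — and since $\eta$ is a scalar multiplication operator, the two manifestly commute: $\eta[\eta,\D]u = (\eta\lb\nabla_H\eta,g\rb_H,\,-\eta f\nabla_H\eta) = [\eta,\D](\eta u)$, so $[\eta,[\eta,\D]]=\eta[\eta,\D]-[\eta,\D]\eta=0$. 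The only genuine obstacle is the product rule / domain step, specifically showing $\eta f\in\Dom(\nabla_H)$ and $\eta g\in\Dom(\nabla_H\s)$ and that the boundedness of $\eta$ and $\nabla_H\eta$ suffices to close the approximation — this is where the Gaussian nature of $\mu$ and Assumption \ref{ass:L-spectral-gap} (via the Poincar\'e inequality, ensuring $\eta$ itself is genuinely integrable to the needed order) enter; the rest is the bookkeeping above.
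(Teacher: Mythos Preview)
Your approach is essentially identical to the paper's: establish the product rule $\nabla_H(\eta f)=\eta\nabla_H f+f\nabla_H\eta$ by approximation with cylindrical functions, derive the dual rule $\nabla_H\s(\eta g)=\eta\nabla_H\s g-\lb\nabla_H\eta,g\rb_H$ by duality, and read off the explicit formula $[\eta,\D](f,g)=(\lb\nabla_H\eta,g\rb_H,\,-f\nabla_H\eta)$, from which the norm bound, locality, and $[\eta,[\eta,\D]]=0$ follow at once.

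Two minor remarks. First, your parenthetical that boundedness of $\eta$ ``follows'' from $\nabla_H\eta\in L^\infty$ via Poincar\'e or Gaussian concentration is not correct --- those arguments yield $\eta\in L^p$ for all $p$, not $\eta\in L^\infty$ (consider $\eta=\phi_h$ for $h\in H_\mu$). The paper does not discuss this point either and simply asserts the product rule; this is harmless because in the only application of the lemma (the proof of Theorem~\ref{thm:MainThm}) $\eta$ is a bounded Lipschitz function. Second, the aside about $\sqrt2\,\sgn(\D)$ being an isometry is a red herring for the norm estimate; once $B=I_H$ (the standing self-adjointness hypothesis of this section, which you correctly invoke), the coordinatewise bound gives $\n[\eta,\D]u\n^2\le\n\nabla_H\eta\n_\infty^2(\n f\n_2^2+\n g\n_2^2)$ directly.
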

\begin{proof}
For all $f\in\Dom(\nabla_H)$ we have (by approximating $\eta$ and $f$ with cylindrical functions)
$\etaA f \in\Dom(\nabla_H)$ and $\nabla_H(\etaA f) = \eta \nabla_H f + (\nabla_H \etaA)f$. Also, for all $f\in\Dom(\nabla_H)$ and $g\in\Dom(\nabla_H\s)$, we have 
$$ \lb \nabla_H f, \etaB g\rb = \lb \etaB\nabla_H f, g\rb = \lb \nabla_H (\etaB f) -  f \nabla_H\etaB,  g\rb,$$
where the brackets denote the inner product 
of $L^2(E,\mu;H)$.
It follows that $\etaB g\in\Dom(\nabla_H\s)$ and $\nabla_H\s (\etaB g) 
= \etaB\nabla_H\s g -\lb \nabla_H \etaB, g\rb_H$; here the brackets $\lb \cdot,\cdot\rb_H$ denote the (pointwise)
inner product of $H$.
Hence, for $u = \bma f \\ g\ema  \in \Dom(\D)$ (that is, $f\in\Dom(\nabla_H)$ and $g\in\Dom(\nabla_H\s)$), 
\begin{align*}
 [\eta,\D]u 
= \left[\begin{matrix}  \etaB \nabla_H\s  g \\ \etaA \nabla_H f \end{matrix}\right] 
-\left[\begin{matrix} \nabla_H\s (\etaB g) \\ \nabla_H (\etaA f) \end{matrix}\right]  
= \left[\begin{matrix} \lb \nabla_H \etaB,g\rb_H \\ -(\nabla_H \etaA)f  \end{matrix}\right]. 
\end{align*}
We infer that $[\eta,\D]$ is bounded and $\n [\eta,\D]\n \le \n \nabla_H \eta\n_{\infty}$. The locality 
assertion is an immediate consequence of the above representation of $[\eta,\D]$.

To prove that $ [\eta ,[\eta , \D]]=0$, just note that
$$[\eta,[\eta , \D]] u = \left[\begin{matrix} \etaA\lb \nabla_H \etaB,g\rb_H 
\\ -\etaB(\nabla_H \etaA)f  \end{matrix}\right]
-\left[\begin{matrix} \lb \nabla_H \etaB,\etaA g\rb_H \\ -(\nabla_H \etaA) \etaB f  \end{matrix}\right] = 0.
$$
\end{proof}

As in \cite{McIMor} we deduce:

\begin{lemma}\label{lem:CommForm}
Under the hypotheses of Theorem~\ref{thm:MainThm}, the following commutator identity holds
for all $t\in\R$, $\eta\in\Dom(\nabla_H)$, and $u\in L^2(E,\mu)\oplus L^2(E,\mu;H)$:
\[
[\eta ,e^{it\D}]u = it \int_0^1 e^{ist\D} [\eta ,\D] e^{i(1-s)t\D}u\,\ds.\]
\end{lemma}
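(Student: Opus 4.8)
\textbf{Proof proposal for Lemma~\ref{lem:CommForm}.}

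The plan is to differentiate the operator-valued function $s \mapsto e^{ist\D}[\eta,\D]e^{i(1-s)t\D}$ in the appropriate weak sense and integrate back. First I would reduce to $u$ in a suitable dense, regularity-stable subspace; the natural choice is $u \in \Dom(\D)$ (or even $\Dom(\D^2)$), since $\Dom(\D)$ is dense in $L^2(E,\mu)\oplus L^2(E,\mu;H)$ and is invariant under the group $(e^{it\D})_{t\in\R}$. For such $u$, the vector $e^{i(1-s)t\D}u$ lies in $\Dom(\D)$ for every $s$, so by Lemma~\ref{lem:comm} the expression $[\eta,\D]e^{i(1-s)t\D}u$ is well-defined and depends continuously on $s$, and after applying the bounded group $e^{ist\D}$ the integrand is a continuous $L^2$-valued function of $s\in[0,1]$; hence the right-hand side integral exists as a Bochner integral. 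For a general $u$ one then passes to the limit using that $\|[\eta,\D]\|\le\|\nabla_H\eta\|_\infty$ is bounded (Lemma~\ref{lem:comm}) together with the uniform boundedness of $(e^{it\D})_{t\in\R}$, so both sides are bounded in $u$ and agree on a dense set.

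The key computation is the following. Fix $u\in\Dom(\D)$ and set $\Phi(s) := e^{ist\D}\,\eta\, e^{i(1-s)t\D}u$. Since $\eta u'\in\Dom(\D)$ whenever $u'\in\Dom(\D)$ (again by Lemma~\ref{lem:comm}), $\Phi$ is differentiable in $s$ with
\[
\Phi'(s) = it\D e^{ist\D}\eta e^{i(1-s)t\D}u - it\, e^{ist\D}\eta\D e^{i(1-s)t\D}u
= it\, e^{ist\D}\big(\D\eta - \eta\D\big)e^{i(1-s)t\D}u = -it\, e^{ist\D}[\eta,\D]e^{i(1-s)t\D}u.
\]
Integrating from $s=0$ to $s=1$ and noting $\Phi(1) = e^{it\D}\eta u$ and $\Phi(0) = \eta e^{it\D}u$, we obtain
\[
e^{it\D}\eta u - \eta e^{it\D}u = -it\int_0^1 e^{ist\D}[\eta,\D]e^{i(1-s)t\D}u\,\ds,
\]
which rearranges to $[\eta,e^{it\D}]u = \eta e^{it\D}u - e^{it\D}\eta u = it\int_0^1 e^{ist\D}[\eta,\D]e^{i(1-s)t\D}u\,\ds$, exactly the claimed identity. (One must be slightly careful about the sign bookkeeping in the definition of $[\eta,e^{it\D}]$ versus $[\eta,\D]$; the Morris--McIntosh convention used here makes them consistent.)

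The main obstacle is justifying the differentiation of $\Phi$ and the use of the product rule for the \emph{unbounded} operator $\D$ sandwiched between two group orbits: one needs that $s\mapsto e^{i(1-s)t\D}u$ is a $C^1$ curve \emph{into} $\Dom(\D)$ (with the graph norm), that $\eta$ maps $\Dom(\D)$ into $\Dom(\D)$ continuously in graph norm (which follows from the explicit formula for $\nabla_H(\eta f)$ and $\nabla_H^*(\eta g)$ in the proof of Lemma~\ref{lem:comm}, under the hypothesis $\nabla_H\eta\in L^\infty$), and that $\D$ is closed so that it commutes with the Bochner integral. These are standard once the domains are tracked carefully, and the boundedness of $[\eta,\D]$ lets one remove the extra regularity on $u$ at the end by density and continuity. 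I would also remark that the hypothesis on $\eta$ in the lemma statement is implicitly the same as in Lemma~\ref{lem:comm}, namely $\eta$ real-valued with $\nabla_H\eta\in L^\infty(E,\mu;H)$; without the $L^\infty$ bound the commutator need not be bounded and the density argument fails.
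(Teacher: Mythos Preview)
Your proposal is correct and follows precisely the standard Duhamel-type argument that the paper invokes by writing ``As in \cite{McIMor} we deduce''; the paper gives no further proof, so your computation (differentiate $s\mapsto e^{ist\D}\eta e^{i(1-s)t\D}u$ for $u\in\Dom(\D)$, integrate, and extend by density using the boundedness of $[\eta,\D]$ from Lemma~\ref{lem:comm}) is exactly the intended one. Your closing remark that the hypothesis on $\eta$ must include $\nabla_H\eta\in L^\infty$ is also well taken.
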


At the heart of the approach in \cite{McIMor} is the following lemma. We include its proof for the sake of completeness.

\begin{lemma}[McIntosh  \&  Morris] Let $u, v\in L^2(E,\mu)\oplus L^2(E,\mu;H) = L^2(E,\mu;\C\oplus H)$ have disjoint supports 
and let $\eta\in\Dom(\nabla_H)$ be a real-valued function satisfying
\begin{align}\label{eq:grad}\hbox{$\eta u = u\,$ and $\,{\eta} v = 0$. }
\end{align}
Then for all $t\in\R$ we have \[|\lb e^{it\D}u,v \rb | \leq   |t|^n\|\nabla_H\eta\|_\infty^n \|u\|_2\|v\|_{2}.
\]
In particular, for  $|t|< 1/\|\nabla_H\eta\|_\infty$ it follows that 
$\lb e^{it\D}u,v \rb = 0.$ 
\end{lemma}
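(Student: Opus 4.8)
The plan is to iterate the commutator identity from Lemma~\ref{lem:CommForm}. The starting point is that, since $\eta u = u$ and $\eta v = 0$, one has
\[
\lb e^{it\D}u, v\rb = \lb e^{it\D}\eta u, v\rb = \lb \eta e^{it\D} u, v\rb - \lb [\eta, e^{it\D}]u, v\rb.
\]
The first term on the right vanishes because $\eta v = 0$, so $\lb e^{it\D}u, v\rb = -\lb [\eta, e^{it\D}]u, v\rb$. Now insert the integral formula of Lemma~\ref{lem:CommForm}:
\[
\lb e^{it\D}u, v\rb = -it\int_0^1 \lb e^{ist\D}[\eta,\D]e^{i(1-s)t\D}u, v\rb\,\ds.
\]

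The key step is to repeat this procedure. Using $\eta u = u$ again, write $e^{i(1-s)t\D}u = e^{i(1-s)t\D}\eta u = \eta e^{i(1-s)t\D}u - [\eta, e^{i(1-s)t\D}]u$, and expand the second piece by Lemma~\ref{lem:CommForm}. The term containing $\eta e^{i(1-s)t\D}u$ can be handled by moving $\eta$ past $[\eta,\D]$ using $[\eta,[\eta,\D]]=0$ from Lemma~\ref{lem:comm}, so that after suitable bookkeeping one produces a term with a factor $[\eta,\D]^2$ plus a term with one more nested time integral. Iterating $n$ times, one obtains a representation of $\lb e^{it\D}u, v\rb$ as (up to sign) an $n$-fold iterated integral over $0\le s_1,\dots,s_n$ (with the standard simplex/volume $1/n!$ type normalisation, or more simply a product of $n$ integrals each over $[0,1]$ divided by $n!$) of
\[
(it)^n\,\lb e^{i\tau_0 t\D}[\eta,\D]e^{i\tau_1 t\D}[\eta,\D]\cdots [\eta,\D]e^{i\tau_n t\D}u, v\rb,
\]
where the $\tau_j$ are the gaps determined by the $s_j$ and sum to $1$. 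Each $e^{i\tau_j t\D}$ is unitary (Stone's theorem, $\D$ self-adjoint), hence has operator norm $1$; each $[\eta,\D]$ has norm at most $\|\nabla_H\eta\|_\infty$ by Lemma~\ref{lem:comm}; the $n$-fold integral contributes a factor bounded by $1$ (or $1/n!$). Estimating the inner product by Cauchy--Schwarz gives
\[
|\lb e^{it\D}u, v\rb| \le |t|^n \|\nabla_H\eta\|_\infty^n \|u\|_2\|v\|_2,
\]
as claimed; the $1/n!$, if present, only strengthens the bound. Letting $n\to\infty$ when $|t|\,\|\nabla_H\eta\|_\infty < 1$ forces $\lb e^{it\D}u, v\rb = 0$.

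The main obstacle is carrying out the iteration cleanly, i.e.\ keeping track of where the cut-off $\eta$ sits after each application of Lemma~\ref{lem:CommForm} and using $[\eta,[\eta,\D]]=0$ to ensure that each iteration genuinely gains a power of $[\eta,\D]$ rather than leaving behind an uncontrolled $\eta$-dependent remainder. One clean way to organise this is to prove, by induction on $n$, the identity
\[
[\eta, e^{it\D}]u = \sum_{k=1}^{n} \frac{(it)^k}{k!} \int_{[0,1]^k} \!\!\Big(\textstyle\prod_j e^{is_j t \D}\Big)\text{-type term with } [\eta,\D]^k \;+\; R_n,
\]
with $R_n$ an $(n+1)$-fold integrated remainder of the same shape; here the flatness of the commutator, $[\eta,[\eta,\D]]=0$, is exactly what makes the $[\eta,\D]^k$ factors coalesce and the combinatorics collapse to the simple power bound. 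Since $\eta u = u$, applying this to $u$ and pairing with $v$ (using $\eta v = 0$ to kill the leading $\eta e^{it\D}u$ contribution) yields the stated estimate; a direct norm bound on $R_n$ shows it tends to $0$ in the stated range of $t$, which is what we want to emphasise. The remaining verifications --- that the iterated integrals converge, that unitarity applies, that the norm of $[\eta,\D]$ is $\le \|\nabla_H\eta\|_\infty$ --- are immediate from the preceding lemmas.
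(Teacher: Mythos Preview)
Your approach is correct and is essentially the same as the paper's: both iterate the commutator identity of Lemma~\ref{lem:CommForm}, use $[\eta,[\eta,\D]]=0$ to keep the combinatorics under control, and exploit $\eta u=u$, $\eta v=0$ together with unitarity of $e^{it\D}$. The difference is organisational. You try to iterate ``by hand'', inserting $\eta$ at $u$ and at $v$ at each stage and tracking all the resulting terms; you yourself identify the bookkeeping as the main obstacle and do not actually carry it out (``after suitable bookkeeping\dots'', ``-type term with $[\eta,\D]^k$''). The paper sidesteps this by introducing the derivation $\delta(S):=[\eta,S]$ and proving two clean inductive statements: first, $\lb \delta^n(e^{it\D})u,v\rb = (-1)^n\lb e^{it\D}u,v\rb$ (immediate from $\eta u=u$, $\eta v=0$); second, the operator-norm bound $\|\delta^n(e^{it\D})\|\le |t|^n\|[\eta,\D]\|^n$, obtained by applying the Leibniz rule for $\delta$ to the integrand in Lemma~\ref{lem:CommForm} (using $\delta([\eta,\D])=0$) and summing the resulting binomial expansion via $\int_0^1(s+(1-s))^m\,\ds=1$. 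This packaging turns your unspecified $n$-fold iterated integral into a one-line induction and removes any need for the $1/n!$ you were unsure about.
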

\begin{proof}
To simplify the notation, let $\delta$ be
the {\it derivation} defined by $\delta(S)=[\eta,S]$ and inductively write 
$\delta^k(S) := \delta(\delta^{k-1}(S))$ for the higher commutators,
adopting the convention that $\delta^0(S):=S$. 
Then, for all integers $k\ge 1$,
$$ \lb \delta^k(e^{it\D}) u,v \rb  = \lb \eta \delta^{k-1}(e^{it\D})u - \delta^{k-1}(e^{it\D})\eta u,v \rb 
= -\lb \delta^{k-1}(e^{it\D})u,v \rb,
$$
using the assumptions that $\eta u=u$ and ${\eta} v=0$.
Hence, by induction, 
\begin{equation}\label{easily}
\lb \delta^n(e^{it\D}) u,v \rb =(-1)^n \lb e^{it\D} u,v\rb, \quad n\ge 1.
\end{equation}

On the other hand, using the identity $\delta(ST)=\delta(S)T+S\delta(T)$, Lemma~\ref{lem:CommForm}, 
 and the fact, given by the second assertion in Lemma \ref{lem:comm}, 
that $\delta([\eta, \D])=[\eta, [\eta, \D]]=0$, we obtain
\begin{equation}\label{eq: ind.In}
\delta^{m+1}(e^{it\D})u = it\int_0^1 \sum_{k=0}^{m} {\binom{m}{k}}
\delta^{m-k}(e^{ist\D}) [\eta, \D] \delta^k(e^{i(1-s)t\D})u\,\ds, \quad m\ge 0,
\end{equation}
where
$\displaystyle\binom{m}{k}:=\frac{m!}{k!(m-k)!}$. We now prove by induction that
\begin{equation}\label{eq: ind.NormIn}
\|\delta^n(e^{it\D})\| \leq  |t|^n\,\|[\eta,\D]\|^n, \quad m\ge 0. 
\end{equation}
For $n=0$, this follows from the fact that the 
operators $e^{it\D}$ are unitary. Now
let $m\ge 0$ and suppose that \eqref{eq: ind.NormIn} holds for all integers
$0\le n\leq m$. We then use~\eqref{eq: ind.In} to obtain
\begin{align*}
\|\delta^{m+1}(e^{it\D})\| &\leq |t| \int_0^1 \sum_{k=0}^{m} 
\binom{m}{k} \|\delta^{m-k}(e^{ist\D})\|\, \|[\eta, \D]\|\,
\|\delta^{k}(e^{i(1-s)t\D})\| \, \ds \\
&\leq  |t|^{m+1}\,\|[\eta,\D]\|^{m+1}  \int_0^1 \sum_{k=0}^{m}
{\binom{m}{k}} {s}^{m-k} (1-{s})^k  \, \ds \\
&=  |t|^{m+1}\,\|[\eta,\D]\|^{m+1}  \int_0^1 ({s} + (1-{s}))^{m} \, \ds \\
&=  |t|^{m+1}\,\|[\eta,\D]\|^{m+1} \ .
\end{align*}
 This proves~\eqref{eq: ind.NormIn}.

The lemma now follows by using the estimate \eqref{eq: ind.NormIn} in \eqref{easily} together
with Lemma \ref{lem:comm}.  
\end{proof}

\begin{proof}[Proof of Theorem \ref{thm:MainThm}]
What remains to be proven is that, given $\varepsilon>0$, disjoint closed sets $A$ and $B$ in $E$
can be `separated' by an $\eta \in \Dom(\nabla_H)$, 
in the sense that $\eta \equiv 1$ on $A$ and $\eta\equiv 0$ on $B$,
that satisfies $\nabla_H \eta\in L^\infty(E,\mu;H)$ and
$$\n \nabla_H \eta\n_\infty \le (1+\varepsilon) \n i_H\n/\dist(A,B).$$

It is clear that we can do the separation with bounded Lipschitz functions $f$ whose Lipschitz constant $L$
is at most $(1+\varepsilon)/\dist(A,B)$. To complete the proof, 
we need to show that such functions do indeed belong to $\Dom(\nabla_H)$ and satisfy  
$\n \nabla_H f\n_\infty \le \n i_H\n L.$ This last step is the most important technical difficulty 
that needs to be overcome in order to apply McIntosh and Morris' approach to finite speed of propagation 
in the Ornstein-Uhlenbeck context. We prove it in Theorem \ref{thm:LipH} from the Appendix, as it is of 
independent interest.

\end{proof}

\section{Off-diagonal bounds}\label{sec:DG}

The results of the previous sections will now be applied to obtain $L^{2}-L^{2}$ off-diagonal bounds
for Ornstein-Uhlenbeck operators. Such off-diagonal bounds can be seen as integrated versions of heat kernel bounds, and play a key role in the modern approach to spectral multiplier problems. As can be seen, e.g., in \cite{AKM}, such bounds are particularly useful when dealing with semigroups that do not have standard Calder\'on-Zygmund kernels, but still exhibit a diffusive behaviour. For more information on the role of Davies-Gaffney bounds and finite speed of propagation from the point of view of geometric heat kernel estimates, see e.g. \cite{BCS}. For their use in spectral multiplier theory, see e.g. \cite{COSY}.

We begin with some general observations.
If $-iG$ generates a bounded $C_0$-group $U$ on a Banach space $X$, for any $\phi\in L^1(\R)$ we may define a 
bounded operator $\wh \phi(G)$ by means of the {\em Weyl functional calculus} (see, e.g., \cite{Jeff}):
$$ \wh\phi(G)x := \int_{-\infty}^\infty \phi(t) U(t)x\,\dt, \quad x\in X.$$
When $X$ is a Hilbert space and $G $ is 
self-adjoint, $U$ is unitary and the definition of $\wh\phi(G)$ 
agrees with the one obtained by the spectral theorem:
\begin{align*}
 \int_{-\infty}^\infty \phi(t) U(t)x\,\dt
  & =\int_{-\infty}^\infty \phi(t) \int_{\sigma(G)} e^{-it\lambda} \,{\rm d}E(\lambda)x\,\dt
  \\ & = \int_{\sigma(G)}  \int_{-\infty}^\infty \phi(t) e^{-it\lambda} \,{\rm d}E(\lambda)x
       = \int_{\sigma(G)}  \int_{-\infty}^\infty \wh\phi(\lambda) \,{\rm d}E(\lambda)x.
\end{align*}

As an application of finite speed of propagation we prove, under the assumptions of 
Theorem \ref{thm:MainThm}, some off-diagonal bounds
for the operators $\wh\phi(\D)$ in the self-adjoint case, i.e., where $\D = \bma 0 & \nabla_H\s \\ \nabla_H & 0\ema$.
We have learnt this argument from Alan M$^{\rm c}$Intosh.
The main observation is the following. If $u,v\in L^2(E,\mu)\oplus L^2(E,\mu;H)$ have supports 
separated by a distance $R\n i_H\n$,
we apply the Weyl calculus to $\D$ and note that $\lb e^{-it\D}u,v\rb=0$ for $|t|\le R$ since 
$e^{-it\D}$ propagates with speed at most $\n i_H\n$. As a consequence 
we obtain 
 \begin{align*}
|\lb  \wh\phi(\D) u,v\rb|  
 & = \Big|\int_{|t|\ge R} \phi(t) \lb e^{-it\D}u,v\rb \,\dt\Big|
  \le \Big(\int_{|t|\ge R} |\phi(t)| \,\dt\Big)\n u\n_2\n v\n_2.
\end{align*}
We will work out two special cases where this leads to an interesting explicit estimate.

\begin{example}
\label{ex:alan}
Let $R>0$.
\begin{enumerate}
\item 
If $f\in L^2(E,\mu)$ and $g\in L^2(E,\mu)$ have supports separated by a distance at least $R\n i_H\n$,
then
$$\displaystyle |\lb e^{t\L}f,g\rb| \le  \frac{2t}{\pi R^2}\exp\big(\frac{-R^2}{2t}\big) \n f\n_2 \n g\n_2.$$
\item
If $f\in L^2(E,\mu)$ and $g\in L^2(E,\mu;H)$ have supports separated by a distance at least $R\n i_H\n$,
then
$$\displaystyle |\lb \nabla_H e^{t\L}f,g\rb| 
\le \sqrt{\frac2{\pi t}}  \exp\big(\frac{-R^2}{2t}\big) \n f\n_2 \n g\n_2.$$
The same estimate holds for $e^{t\L}\nabla_H\s$.
\end{enumerate}
\end{example}

\begin{proof}
By the Weyl calculus,
$$ \bma e^{t\L} & 0 \\ 0 & e^{t\underline{\L}}\ema = e^{-\frac12t\D^2} 
= \frac1{\sqrt{2\pi t}} \int_{-\infty}^\infty  e^{-s^2/2t}e^{-is\D} \,\ds$$
and
\begin{align}\label{eq:grad-FT} \bma 0 & e^{t\L}\nabla_H\s \\ \nabla e^{t\L} & 0\ema = \D e^{-\frac12t\D^2} 
= \frac{i}{\sqrt{2\pi t^3}} 
\int_{-\infty}^\infty s e^{-s^2/2t}e^{-is\D} \,\ds,
\end{align}
where we used that $\nabla_H\s e^{t\underline{\L}}=\nabla_H\s e^{t\underline \L\s}  =  
e^{t{\L}}\nabla_H\s$.

The first assertion of the theorem now follows from the theorem via 
 \begin{align*}
|\lb e^{t\L}f,g\rb| 
 =   \big|\big\lb e^{-\frac12t\D^2}\bma f \\ 0\ema, \bma g \\ 0 \ema\big\rb\big| 
& \le  \frac2{\sqrt{2\pi t}} \int_{R}^\infty e^{-s^2/2t} \n f\n_2 \n g\n_2\,\ds
\\ & = \frac2{\sqrt{2\pi}} \int_{R/\sqrt t}^\infty e^{-s^2/2} \n f\n_2 \n g\n_2\,\ds
\\ & \le\sqrt{\frac{2t}{\pi R^2}}e^{-R^2/2t} \n f\n_2 \n g\n_2,
 \end{align*}
where the last inequality follows from a standard estimate for the Gaussian distribution.
Similarly,
\begin{align*}
|\lb \nabla_H e^{t\L}f,g\rb|
 =   \big|\big\lb e^{-\frac12t\D^2}\bma f \\ 0\ema, \bma 0 \\ g \ema\big\rb\big| 
 & \le \frac{2}{\sqrt{2\pi t^3}} \int_{R}^\infty se^{-s^2/2t} \n f\n_2 \n g\n_2\,\ds
\\ & = \sqrt{\frac{2}{\pi t}}  e^{-R^2/2t} \n f\n_2 \n g\n_2.
 \end{align*}
The proof for $e^{t\L}\nabla_H\s$ is similar. 
\end{proof}

Similar results can be obtained by considering other functions $\phi$. For instance, off-diagonal bounds for
$\L e^{t\L}$ may be obtained by taking $\phi(s) =s^2 e^{-s^2/2t}$ and using the identity 
\begin{align*} \bma t\L e^{t\L} & 0 \\ 0 & t\underline \L  e^{t\underline \L} \ema  
& = -\frac{1}{2}t\D^2e^{-\frac12t\D^2}. \end{align*}
We leave the details to the reader.

\subsection{Off-diagonal bounds for resolvents in the non-self-adjoint case}
In the non-self-adjoint case, we cannot make use of finite speed of propagation for 
an underlying group to prove off-diagonal bounds for $(e^{t\L})_{t \geq 0}$.
However, it is possible to use the direct approach from \cite{AKM} (and its refinement 
in \cite{AAM10-2}) to obtain off-diagonal bounds for $((I+t^{2}\L)^{-1})_{t \in \R}$.
We leave the investigation of possible other approaches for $(e^{t\L})_{t \geq 0}$ in the non-self-adjoint case for future work.

We adopt Assumptions \ref{ass:mu-infty}
and \ref{ass:L-spectral-gap}. We do not assume $\L$ to be self-adjoint, so $i\D$ may fail to
generate a $C_0$-group on $L^2(E,\mu)\oplus L^2(E,\mu;H)= L^2(E,\mu;\mathbb{C}\oplus H)$. 
Nevertheless, $\D$ does enjoy some good properties; for instance 
it is bisectorial on  $L^2(E,\mu;\mathbb{C}\oplus H)$  and therefore
the quantity
\begin{align}\label{eq:Mbisect} 
M := \sup_{t\in\R} \n (I-it\D)^{-1}\n
\end{align}
 is finite.
This follows from the general operator-theoretic framework presented in \cite{AKM}.

\begin{proposition}\label{prop:OD-resolvent}
 Let Assumptions \ref{ass:mu-infty} and \ref{ass:L-analytic} hold.
Suppose $u,v\in L^2(E,\mu;\C\oplus H)$ have disjoint supports at a distance greater than $R$. Then 
$$
|\lb (I+it\D)^{-1}u,v\rb| \leq C \exp(-\alpha R/|t|)\n u\n_2\n v\n_2,
$$
for some $\alpha,C>0$ independent of $u,v$ and $R,t$.
\end{proposition}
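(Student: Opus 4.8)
The strategy is the classical Combes--Thomas / Agmon-type argument adapted to the first-order operator $\D$, exactly in the spirit of \cite{AKM} and \cite{AAM10-2}. Fix $u,v$ with $\dist(\supp u,\supp v)>R$ and pick a real-valued function $\eta\in\Dom(\nabla_H)$ with $\nabla_H\eta\in L^\infty(E,\mu;H)$, $\eta\equiv 0$ on a neighbourhood of $\supp u$, $\eta\equiv 1$ on a neighbourhood of $\supp v$, and $\n\nabla_H\eta\n_\infty\le (1+\e)\n i_H\n/R$; such $\eta$ exists by the same Lipschitz-function construction used in the proof of Theorem \ref{thm:MainThm} (via Theorem \ref{thm:LipH}). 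For a real parameter $\rho$ (to be optimised, of size comparable to $R/|t|$ divided by $\n i_H\n$) consider the conjugated resolvent $e^{\rho\eta}(I+it\D)^{-1}e^{-\rho\eta}$. Because $e^{\pm\rho\eta}$ are bounded multiplication operators and $\eta$ is essentially bounded on the supports involved, it suffices to show this conjugated operator is bounded on $L^2(E,\mu;\C\oplus H)$ with norm $\le C$ for a suitable choice of $\rho$, and then estimate
\[
|\lb (I+it\D)^{-1}u,v\rb| = |\lb e^{\rho\eta}(I+it\D)^{-1}e^{-\rho\eta}\,e^{\rho\eta}u, e^{-\rho\eta}v\rb|,
\]
where $e^{\rho\eta}u=u$ (since $\eta=0$ on $\supp u$) and $\n e^{-\rho\eta}v\n_2\le e^{-\rho}\n v\n_2$ (since $\eta=1$ on $\supp v$); choosing $\rho\sim\alpha R/|t|$ then produces the claimed exponential gain.

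\textbf{Key computation.} The conjugated resolvent is the resolvent of the perturbed operator
\[
e^{\rho\eta}(I+it\D)e^{-\rho\eta} = I + it\,e^{\rho\eta}\D e^{-\rho\eta} = I + it\D - it\rho[\eta,\D] + (\text{higher commutators}),
\]
but by the second assertion of Lemma \ref{lem:comm} we have $[\eta,[\eta,\D]]=0$, so the conjugation truncates exactly:
\[
e^{\rho\eta}\D e^{-\rho\eta} = \D - \rho[\eta,\D],
\]
a bounded perturbation of $\D$ with $\n\rho[\eta,\D]\n\le\rho\n\nabla_H\eta\n_\infty$. Thus
\[
I+it\,e^{\rho\eta}\D e^{-\rho\eta} = (I+it\D)\bigl(I - it\rho(I+it\D)^{-1}[\eta,\D]\bigr),
\]
which is invertible, with the inverse bounded by $\le 2M$, as soon as $|t|\rho\,M\,\n\nabla_H\eta\n_\infty\le \tfrac12$, where $M$ is the bisectoriality constant from \eqref{eq:Mbisect}. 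With $\n\nabla_H\eta\n_\infty\le (1+\e)\n i_H\n/R$ this requires $\rho\le \tfrac{R}{2M(1+\e)\n i_H\n |t|}$, so taking $\rho$ equal to this bound gives a constant $\alpha = (2M(1+\e)\n i_H\n)^{-1}$ and $C=2M$, both independent of $u,v,R,t$.

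\textbf{Main obstacle.} The only genuinely non-routine point is again the one flagged in the proof of Theorem \ref{thm:MainThm}: producing a separating function $\eta$ that actually lies in $\Dom(\nabla_H)$ with the required bound on $\n\nabla_H\eta\n_\infty$, since in the Ornstein--Uhlenbeck setting Lipschitz functions are not automatically in the domain of the gradient. This is exactly what Theorem \ref{thm:LipH} of the Appendix supplies, so I would invoke it. A secondary (but entirely routine) bookkeeping point is to verify that the formal manipulations above are justified on $\Dom(\D)$ and then extend by density, using that $e^{\pm\rho\eta}$ maps $\Dom(\D)$ into itself (which follows from the product-rule computation in Lemma \ref{lem:comm}); and one should note the statement is phrased for $(I+it\D)^{-1}$ with $t\in\R$, which is covered uniformly by $M<\infty$. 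Everything else is the standard exponential-weight resolvent estimate.
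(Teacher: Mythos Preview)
Your proof is correct and follows essentially the same exponential-weight strategy as the paper, relying on the same two key inputs: the commutator bound of Lemma~\ref{lem:comm} (including $[\eta,[\eta,\D]]=0$) and the existence of a separating Lipschitz function in $\Dom(\nabla_H)$ via Theorem~\ref{thm:LipH}. The only organisational difference is that you conjugate by $e^{\rho\eta}$ and invert the perturbed operator $I+it(\D-\rho[\eta,\D])$ by a Neumann series, whereas the paper (following \cite{AAM10-2}) works directly with the multiplier $\exp(\alpha R\varphi/|t|)-1$, uses the resolvent commutator identity $[\eta,R_t]=itR_t[\eta,\D]R_t$, and absorbs the self-referential term; both routes yield the same constants up to harmless factors.
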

\begin{proof}
The proof is a straight forward adaptation of \cite[Proposition 5.1]{AAM10-2}, and is included for the sake of completeness.

By the uniform boundedness of the operators $R_t:=(I-it\D)^{-1}$, $t\in\R$, it suffices to prove the estimate
in the statement of the proposition
for $|t|< \alpha R$, where $\alpha>0$ is a positive constant to be chosen in a moment.

Let $u\in L^2(E,\mu;\C\oplus H)$ be supported in a set $B\subseteq E$ and let $A\subseteq E$ be another set
such that $\dist(A,B) \ge R$.
Define
$$\wt A := \big\{x \in E: \ \dist(x,A) < \frac12 \dist(x,B)\big\}.$$
Note that $\dist(\wt A,B) \ge \frac12 \dist(A,B)$.

Let $\varphi: E \to [0, 1]$ be a bounded Lipschitz function with support in $\wt A$
such that $\varphi|_{A} \equiv 1$,  $\varphi|_B \equiv 0$, and whose Lipschitz constant is at most $4/R$.
By Theorem \ref{thm:LipH}, $\varphi\in\Dom(\nabla_H)$ and $\n \nabla_H \varphi\n \le 4\n i_H\n/R$.

Set $\eta:= \exp(\alpha R \varphi/|t|) -1$. Then,  for all $x \in A$,
$$\eta(x) = \exp(\alpha R/|t|) -1 \ge \frac12\exp(\alpha R/|t|)$$ 
(recall the assumption $|t|< \alpha R$) and $\eta|_B \equiv 0$. Hence,
\begin{align}\label{eq:eta}
\frac12\exp(\alpha R/|t|)\n R_t u \n_{L^2(A,\mu;\C\oplus H)} \le \n\eta R_t u\n_2 = \n [\eta,R_t]u\n_2
\end{align}
using that $\eta u = 0$ by the support properties of $\eta$ and $u$.

It is elementary to verify the commutator identity
$$[\eta,R_t] = it R_t[\eta,\D] R_t.$$
Moreover, using Leibniz rule  (see the proof of Lemma \ref{lem:comm}),  we have
\begin{align*}
 [\eta,\D]v = [\exp(\alpha R \varphi/|t|) -1,\D]v
 &  = (\D\exp(\alpha R \varphi/|t|))v
  =  m\exp(\alpha R \varphi/|t|)v,
\end{align*}
where $m$ is supported on $\wt A$ and satisfies (cf. \ref{lem:comm})
$\n m\n_\infty \le C\a R \n  \nabla_H  \varphi\n/|t| \le 4C\alpha \n i_H\n  /|t|$.
Therefore,
\begin{align*}
    \n [\eta,R_t] u\n_{L^2(\wt A,\mu|_{\wt A};\C\oplus H)}
& = |t| \n R_t [\eta,\D] R_t u\n_{2}
\\ & \le  4MC\alpha  \n i_H\n  \n \exp(\alpha R \varphi/|t|) R_t u\n_{2}
\\ & \le  4MC\alpha  \n i_H\n  \Big(\n\eta R_t u\n_{L^2(\wt A,\mu|_{\wt A};\C\oplus H)}
                                 + \n R_t u\n_2\Big)
\end{align*}
where $M$ is defined by \eqref{eq:Mbisect}.
The choice $\alpha = (8MC \n i_H\n  )^{-1}$, in combination with \eqref{eq:eta}, gives
$$\frac12\exp(\alpha R/|t|)\n R_t u \n_{L^2(A,\mu;\C\oplus H)}
\le  \n [\eta,R_t] u\n_{2} \leq \n R_t u\n_2 \le M\n u\n_2.$$
\end{proof}

\begin{remark}
With the same proof, Proposition \ref{prop:OD-resolvent}  holds in the more general 
context of elliptic divergence-form operators on abstract Wiener spaces considered in 
\cite{MN}.
\end{remark}

Since $$ (I+ it \D)^{-1} +  (I- it \D)^{-1} =  2(I+ t^2 \D^2)^{-1} 
=  2\bma (I- 2t^2 \L)^{-1} & 0 \\ 0 &  (I -2t^2 \L)^{-1}\ema,$$ we have the following corollary.

\begin{corollary}
Let Assumptions \ref{ass:mu-infty} and \ref{ass:L-analytic} hold.
Suppose $u,v\in L^2(E,\mu;\C\oplus H)$ have disjoint supports at a distance greater than $R$. Then 
$$
|\lb (I-t^{2}\L)^{-1}u,v\rb| \leq C \exp(-\alpha R/|t|)\n u\n_2\n v\n_2,
$$
for some $\alpha,C>0$ independent of $u,v$ and $R,t$.
\end{corollary}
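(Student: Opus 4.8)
The final statement to prove is the Corollary deducing off-diagonal bounds for $(I-t^2\L)^{-1}$ from Proposition \ref{prop:OD-resolvent}, using the displayed resolvent identity
\[
(I+it\D)^{-1} + (I-it\D)^{-1} = 2(I+t^2\D^2)^{-1} = 2\bma (I-2t^2\L)^{-1} & 0 \\ 0 & (I-2t^2\uL)^{-1}\ema.
\]
The plan is to reduce the scalar estimate for $(I-t^2\L)^{-1}$ on $L^2(E,\mu)$ to the two $\D$-resolvent estimates already established in Proposition \ref{prop:OD-resolvent}, with a harmless rescaling of $t$ to absorb the factor $2$.

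First I would fix $u,v\in L^2(E,\mu)$ with disjoint supports at distance greater than $R$, and regard them as elements of $L^2(E,\mu;\C\oplus H)$ via $u\mapsto(u,0)$, $v\mapsto(v,0)$; their supports (in the sense of Definition~\ref{def:finite-speed}, i.e.\ $\supp(f,0)=\supp(f)$) are still at distance greater than $R$. Next, reading off the top-left entry of the matrix identity above and pairing with $(v,0)$ gives
\[
2\lb (I-2t^2\L)^{-1}u,v\rb = \lb (I+it\D)^{-1}(u,0),(v,0)\rb + \lb (I-it\D)^{-1}(u,0),(v,0)\rb.
\]
Now apply Proposition~\ref{prop:OD-resolvent} to each term on the right: since $(I-it\D)^{-1}$ is just $(I+is\D)^{-1}$ with $s=-t$, and the bound there is symmetric in $t\mapsto -t$, both terms are bounded by $C\exp(-\alpha R/|t|)\n u\n_2\n v\n_2$ with the same constants $\alpha,C$. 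Hence $|\lb (I-2t^2\L)^{-1}u,v\rb|\le C\exp(-\alpha R/|t|)\n u\n_2\n v\n_2$. Finally, replacing $t$ by $t/\sqrt2$ turns $2t^2$ into $t^2$ and $\exp(-\alpha R/|t|)$ into $\exp(-\alpha\sqrt2\, R/|t|)$, so the claimed estimate holds with $\alpha$ replaced by $\alpha\sqrt2$ (or, keeping the same letter, after renaming constants). One should also note the hypothesis bookkeeping: the Corollary as stated invokes Assumptions~\ref{ass:mu-infty} and \ref{ass:L-analytic}, which is exactly what Proposition~\ref{prop:OD-resolvent} requires, so no extra input (in particular not self-adjointness of $\L$, nor Assumption~\ref{ass:L-spectral-gap}) is needed.

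There is essentially no obstacle here: the only points requiring a word of care are (i) that the norms $\n u\n_2=\n(u,0)\n_2$ are genuinely equal, so no constant is lost passing between $L^2(E,\mu)$ and $L^2(E,\mu;\C\oplus H)$; (ii) that the distance-between-supports hypothesis is preserved under the embedding $f\mapsto(f,0)$, which is immediate; and (iii) the rescaling $t\mapsto t/\sqrt2$, which only affects the constant in the exponent. If one prefers to avoid even the rescaling, one can instead observe that $2t^2$ and $t^2$ differ by a bounded factor and absorb $\sqrt2$ into $\alpha$ directly. I would keep the write-up to a few lines, since all the analytic content already resides in Proposition~\ref{prop:OD-resolvent}.
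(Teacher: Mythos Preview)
Your proof is correct and follows exactly the approach the paper intends: the paper's ``proof'' of this corollary consists solely of the displayed resolvent identity $(I+it\D)^{-1}+(I-it\D)^{-1}=2(I+t^2\D^2)^{-1}$ together with Proposition~\ref{prop:OD-resolvent}, and you have simply spelled out the embedding $u\mapsto(u,0)$, the application of the proposition to both resolvent terms, and the rescaling $t\mapsto t/\sqrt2$ that the paper leaves implicit. There is nothing to add.
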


\section{Appendix: $H$-Lipschitz functions}

It is assumed that Assumptions \ref{ass:mu-infty} and
and \ref{ass:L-spectral-gap} hold. Our aim is to prove that under these conditions, 
bounded Lipschitz functions on $E$ (and more generally, bounded
$H$-Lipschitz functions on $E$) belong to $\Dom(\nabla_{H})$ with a suitable bound; this result was  
needed in the proof of Theorem \ref{thm:MainThm}.
We point our that this result becomes trivial in the case $E = \R^d = H$,
which is the setting for studying the classical Ornstein-Uhlenbeck operator on $L^p(\R^d,\gamma)$
(see \eqref{eq:classOU}). Readers whose main interests concern this particular case
will therefore not need the result presented here.

We recall some further standard facts about reproducing kernel Hilbert spaces. The reader is referred to 
\cite{Bo, Nee-Can} for the proofs and more details.
Recall that $H_\mu$ denotes the reproducing kernel Hilbert space associated with the invariant measure 
$\mu$ (see \eqref{eq:Qmu}) and that $i_{\mu}: H_\mu\to E$ denotes the inclusion mapping.
Since $\mu$ is Radon, the Hilbert space $H_\mu$ is separable. When no confusion can arise we will suppress
the mapping $i_\mu$ from our notations and identify $H_\mu$ with its image in $E$.

The mapping $$\phi: i_\mu\s x\s\mapsto \lb \cdot, x\s\rb$$ extends to an isometric embedding
of $H_\mu$ into $L^2(E,\mu)$. In what follows we shall write $\phi_h := \phi h$ for the image in 
$L^2(E,\mu)$ of a vector $h\in H_\mu$.
By the {\em Karhunen-Lo\`eve decomposition} (see \cite[Corollary 3.5.11]{Bo}), if 
$(h_n)_{n\ge 1}$ is an orthonormal basis for $H_\mu$, then for $\mu$-almost all $x\in E$ we have
$$ x = \sum_{n=1}^\infty \phi_{h_n}(x) h_n$$
with convergence both $\mu$-almost surely in $E$ and in the norm of $L^2(E,\mu)$.
 We may furthermore choose the vectors $h_n\in H_\mu$ 
in such a way that $h_n = i_\mu\s x_n\s$ for 
suitable $x_n\s\in E\s$. In doing so, this exhibits the function $x\mapsto x$ as the limit (for $N\to\infty$)
of the cylindrical functions 
$\sum_{n=1}^N \lb x, x_n\s\rb h_n$. 

The next lemma relates functions which have pointwise directional derivatives in the direction of $H$ 
with functions in the domain of the directional gradient $\nabla_H$. To this end we recall that 
a function $f:E\to \R$ is said to be {\em G\^ateaux differentiable in the direction of $H$ at a point $x\in E$}
if there exists an element $h(x)\in H$, the {\em G\^ateaux derivative of $f$ at the point $x$} such that for all $h\in H$ we have 
$$ \lim_{t\downarrow 0} \frac1t (f(x+th)-f(x)) = \lb h,h(x)\rb.$$
The function $f$ is said to be {\em G\^ateaux differentiable in the direction of $H$}
if it is G\^ateaux differentiable in the direction of $H$ at every point $x\in E$. The resulting
function which assigns to each point $x\in E$ the G\^ateaux derivative of $f$ at the point $x$
is denoted by $D_H f: E\to H$.

\begin{lemma} If $f:E\to \R$ is uniformly bounded and G\^ateaux differentiable  in the direction of $H$, with 
bounded and strongly measurable 
derivative $D_H f$,
then $f\in \Dom(\nabla_H)$ and $\nabla_H f = D_H f$.
\end{lemma}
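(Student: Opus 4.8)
The plan is to show that $f$ lies in the $L^2$-domain of the closed operator $\nabla_H$ by exhibiting an explicit approximating sequence of cylindrical functions $f_N$ with $f_N\to f$ in $L^2(E,\mu)$ and $\nabla_H f_N\to D_H f$ in $L^2(E,\mu;H)$. Closedness of $\nabla_H$ (available under Assumption~\ref{ass:L-analytic}, hence certainly under Assumption~\ref{ass:L-spectral-gap} together with the standing hypotheses) then forces $f\in\Dom(\nabla_H)$ with $\nabla_H f = D_H f$. First I would fix an orthonormal basis $(h_n)_{n\ge 1}$ of $H_\mu$ of the form $h_n = i_\mu\s x_n\s$ with $x_n\s\in E\s$, as guaranteed by the Karhunen--Lo\`eve discussion above, and let $\pi_N: E\to E$ be the finite-rank maps $\pi_N x := \sum_{n=1}^N \lb x,x_n\s\rb h_n$. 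By Karhunen--Lo\`eve, $\pi_N x\to x$ both $\mu$-a.e. and in $L^2(E,\mu;E)$. Set $f_N := f\circ\pi_N$. Each $f_N$ is a bounded cylindrical function (composition of $\phi\in C_{\mathrm b}(\R^N)$-type data with $f$; strictly, to land in the literal class \eqref{eq:cyl} one first mollifies $f$ in the $N$ relevant coordinates, which does not affect the limiting argument), and $f_N\to f$ in $L^2(E,\mu)$ by dominated convergence using the uniform bound on $f$ and $\mu$-a.e.\ convergence $\pi_N x\to x$ together with continuity of $f$ along $H$-directions (note $\pi_N x - x \in$ the closure of $H_\mu$, hence in $H$).

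Next I would compute $\nabla_H f_N$. Since $f$ is G\^ateaux differentiable in the direction of $H$ and the range of each $\pi_N$ is a finite-dimensional subspace of $H_\mu\subseteq H$, the chain rule gives $\nabla_H f_N(x) = P_N^{H_\mu} D_H f(\pi_N x)$, where $P_N^{H_\mu}$ is the orthogonal projection of $H$ onto $\mathrm{span}\{h_1,\dots,h_N\}$ followed by the (bounded) inclusion $i_{\mu,H}: H_\mu\to H$ — more precisely, differentiating $x\mapsto f(\pi_N x)$ in direction $h\in H$ produces $\lb \pi_N h, D_H f(\pi_N x)\rb_H = \lb h, \pi_N^* D_H f(\pi_N x)\rb_H$ with $\pi_N^*$ the $H$-adjoint of the finite-rank operator $\pi_N|_H$. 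The point is that $\nabla_H f_N$ is a bounded strongly measurable $H$-valued function with $\|\nabla_H f_N\|_\infty \le \|D_H f\|_\infty$ uniformly in $N$. It then remains to prove $\nabla_H f_N \to D_H f$ in $L^2(E,\mu;H)$. For this I would split as $\nabla_H f_N - D_H f = \pi_N^*\big(D_H f(\pi_N x) - D_H f(x)\big) + \big(\pi_N^* - I\big)D_H f(x)$, and handle each term by dominated convergence: the uniform bound $\|D_H f\|_\infty$ dominates, $\pi_N^* h\to h$ in $H$ for every $h$ (since $\pi_N x\to x$ forces $\pi_N\to I$ strongly on the dense set $H_\mu$ and $\|\pi_N|_H\|$ is controlled), and one needs $\mu$-a.e.\ convergence $D_H f(\pi_N x)\to D_H f(x)$.

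The main obstacle is precisely the last point: $D_H f$ is assumed only bounded and strongly measurable, not continuous, so $D_H f(\pi_N x) \to D_H f(x)$ is not automatic. The way around it is to avoid needing pointwise convergence of $D_H f$ and instead pass to the limit weakly. Concretely: the sequence $(\nabla_H f_N)_N$ is bounded in $L^2(E,\mu;H)$, so along a subsequence it converges weakly to some $g\in L^2(E,\mu;H)$; since $f_N\to f$ in $L^2(E,\mu)$ and $\nabla_H$ is closed (hence weakly closed as a graph is a linear subspace), we get $f\in\Dom(\nabla_H)$ and $\nabla_H f = g$. It then remains to identify $g = D_H f$, which I would do by testing against cylindrical $H$-valued functions: for $\psi$ of the form $\psi(x) = \varphi(\lb x,x_1\s\rb,\dots,\lb x,x_k\s\rb)h$ with $h\in H_\mu$ and $\varphi\in C_{\mathrm b}^1$, integrate by parts (using the known adjoint formula for $\nabla_H$ on cylindrical functions, i.e.\ the definition of $\nabla_H^*$) to write $\lb g,\psi\rb = \lb \nabla_H f,\psi\rb = -\lb f, \nabla_H^*\psi\rb$, and on the other hand compute $\lb D_H f,\psi\rb$ via the one-dimensional difference-quotient definition of G\^ateaux differentiability plus dominated convergence in $t$. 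Equality of these two for all such $\psi$, which span a dense subspace of $L^2(E,\mu;H)$ (again by Karhunen--Lo\`eve, since $H_\mu$ is dense in $H$), yields $g = D_H f$, and in particular the full sequence converges. This also automatically gives the bound $\|\nabla_H f\|_\infty = \|D_H f\|_\infty$ needed downstream.
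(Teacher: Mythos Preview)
Your approach shares the same skeleton as the paper's: both build the cylindrical approximants $f_N = f\circ\pi_N$ from a Karhunen--Lo\`eve basis $(h_n)$ of $H_\mu$ with $h_n = i_\mu\s x_n\s$, show $f_N\to f$ in $L^2(E,\mu)$ by dominated convergence, compute $\nabla_H f_N$ by the chain rule, and invoke closedness of $\nabla_H$. The divergence is in how convergence of the gradients is established. The paper argues for \emph{strong} $L^2$-convergence $\nabla_H f_N\to D_H f$ directly, via a three-term split that ultimately reduces (for two of the terms) to $D_{H_\mu} f(\pi_N x)\to D_{H_\mu} f(x)$ $\mu$-a.e., handled by dominated convergence. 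You correctly flag that this last step tacitly uses continuity of $D_H f$, which is not among the hypotheses, and you route around it: uniform boundedness of $(\nabla_H f_N)_N$ in $L^2(E,\mu;H)$ gives a weak limit point $g$, weak closedness of the graph yields $f\in\Dom(\nabla_H)$ with $\nabla_H f = g$, and then you identify $g = D_H f$ by testing against $\psi = \varphi\otimes h$ with $h\in H_\mu$, using the Cameron--Martin integration-by-parts formula for the pointwise G\^ateaux directional derivative and the density of such $\psi$ in $L^2(E,\mu;H)$. This is a legitimate and arguably more robust route; it delivers the conclusion under exactly the stated hypotheses, whereas the paper's argument is more direct but silently leans on pointwise convergence of $D_H f$ along $\pi_N x\to x$.

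One small caveat: your claimed bound $\|\nabla_H f_N\|_\infty \le \|D_H f\|_\infty$ with constant $1$ is optimistic, since $\pi_N$ restricted to $H$ is an $H_\mu$-orthogonal (not $H$-orthogonal) projection; the honest uniform bound picks up a factor involving $\|i_{\mu,H}\|$, exactly as in the paper's own estimates for its terms (I)--(III). This is harmless for the weak-compactness step. Also, your parenthetical about mollifying in the $N$ relevant coordinates to land $f_N$ in the literal cylindrical class \eqref{eq:cyl} is indeed necessary (the paper asserts $\psi_N\in C_{\rm b}^1(\R^N)$ without comment, which faces the same issue), and should be carried out explicitly.
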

\begin{proof}
Let $(h_n)_{n\ge 1}$ be a fixed orthonormal basis in $H_\mu$, chosen
in such way that $h_n = i_\mu\s x_n\s$ for suitable $x_n\s\in E\s$.
For all $N\ge 1$ we have, for $\mu$-almost all $x\in E$, 
$$f_N(x):=f\Big(\sum_{n=1}^N \phi_{h_n}(x)h_n\Big)=
\psi_N(\phi_{h_1}(x),\dots,\phi_{h_N}(x)),$$
where the function 
\begin{align}\label{eq:psi-f} \psi_N(t_1,\dots,t_N) =  f\Big(\sum_{n=1}^N t_n h_n\Big)\end{align}
belongs to $ C^1_{\rm b}(\R^N)$. Since we are assuming that $h_n = i_\mu\s x_n\s$,
$f_N$ belongs to $\Dom(\nabla_H)$ and for $\mu$-almost all $x\in E$ we have  
\begin{align*}\nabla_H f_{N}(x)
& = \sum_{j=1}^N \frac{\partial \psi_N}{\partial t_j}(\phi_{h_1}(x),\dots,\phi_{h_N}(x)) h_j
=  \sum_{j=1}^N\Big\lb h_j, D_H f\Big(\sum_{n=1}^N \phi_{h_n}(x) h_n\Big)\Big\rb h_j
\end{align*}
noting that
\begin{align*} \frac{\partial \psi_N}{\partial t_j}(t_1,\dots,t_N) 
& = \lim_{\tau\to 0} 
\frac1\tau \Big[f\Big(\sum_{n=1}^N (t_n+\delta_{jn}\tau) h_n\Big) - f\Big(\sum_{n=1}^N t_n h_n\Big)\Big]
\\ & =  \Big\lb h_j, D_H f\Big(\sum_{n=1}^N t_n h_n\Big)\Big\rb, 
\end{align*}
with $\delta_{jn}$ the Kronecker symbol. 

To finish the proof we will show three things: 
\begin{enumerate}[\rm(i)] 
\item $\lim_{N\to\infty} f_N = f$ in $L^2(E,\mu)$;
\item the sequence $(\nabla_H f_N)_{n\ge 1}$ is Cauchy in $L^2(E,\mu;H)$; 
\item $\mu$-almost everywhere we have $\lim_{N\to \infty} \nabla_H f_N = D_H f$.
\end{enumerate}
Once we have this,
the closedness of $\nabla_H$ will imply that $f\in\Dom(\nabla_H)$ and $\nabla_H f = \lim_{N\to\infty}
\nabla_H f_N = D_H f$.

\smallskip
(i): \ The first claim follows by dominated convergence.

\smallskip
(ii) and (iii): \ Fix integers $M\ge N\ge 1$. Then,
\begin{align*}
&  \Big\n \sum_{j=1}^M\Big\lb h_j, D_H f\Big(\sum_{n=1}^M \phi_{h_n}(\cdot) h_n\Big)\Big\rb h_j
  -  \sum_{j=1}^N\Big\lb h_j, D_H f\Big(\sum_{n=1}^N \phi_{h_n}(\cdot) h_n\Big)\Big\rb h_j\Big\n_{L^2(E,\mu;H)}
\\ & \qquad
\le \Big\n \sum_{j=1}^M\Big\lb h_j,\Big[D_H f\Big(\sum_{n=1}^M \phi_{h_n}(\cdot)h_{n}\Big) -D_H f(\cdot)\Big]\Big\rb h_j\Big\n_{L^2(E,\mu;H)}
\\ & \qquad\qquad + 
\Big\n \sum_{j=1}^M\big\lb h_j,D_H f(\cdot)\big\rb h_j
  -  \sum_{j=1}^N\big\lb h_j,D_H f(\cdot)\big\rb h_j\Big\n_{L^2(E,\mu;H)}
\\ & \qquad\qquad +
\Big\n \sum_{j=1}^N\Big\lb h_j,\Big[D_H f\Big(\sum_{n=1}^N \phi_{h_n}(\cdot) h_n\Big) - D_H f(\cdot)\Big]\Big\rb h_j\Big\n_{L^2(E,\mu;H)}
\\ & \qquad =: {\rm (I)} + {\rm (II)} + {\rm (III)}.  
\end{align*} 
To deal with (II) we note that from $H_\mu\embed H$ it follows that 
$f$ has a bounded G\^ateaux derivative in the direction of $H_\mu$, given by 
$D_{H_\mu}f = i_{\mu,H}\s D_H f$, where $i_{\mu,H}$ is the embedding mapping of $H_\mu$ into $H$. 
Now, since $(h_n)_{n\ge 1}$ is an orthonormal basis in $H_\mu$, for $\mu$-almost all $x\in E$ we have
\begin{align*}
  \lim_{K\to\infty} \sum_{j=1}^K\lb D_{H} f(x),h_j\rb_H h_j 
   & = \lim_{K\to\infty} \sum_{j=1}^K\lb i_{\mu,H}\s D_{H} f(x),h_j\rb_{H} h_j 
\\ & = \lim_{K\to\infty} \sum_{j=1}^K\lb D_{H_\mu} f(x),h_j\rb_{H_\mu} h_j =   D_{H_\mu} f(x) 
\end{align*}
with convergence in $H_\mu$, hence in $H$.
Convergence in
$L^2(E,\mu;H)$ then follows by dominated convergence, noting that $ D_{H_\mu} f$ is uniformly bounded 
as an $H_\mu$-valued function, hence also as an $H$-valued function.

Convergence of (I) and (III) follows in the same way, now using that 
\begin{align*}
\ &  \Big\n \sum_{j=1}^K\Big\lb h_j, \Big[D_H f\Big(\sum_{n=1}^{K} \phi_{h_n}(\cdot) h_n\Big) 
      - D_H f(\cdot)\Big]\Big\rb_H h_j\Big\n_{L^2(E,\mu;H)}
\\ & \qquad 
\le \n i_{\mu,H}\n  \Big\n \sum_{j=1}^K\Big\lb h_j,\Big[D_{H_\mu} f\Big(\sum_{n=1}^K \phi_{h_n}(\cdot) h_n\Big) 
      - D_{H_\mu} f(\cdot)\Big]\Big\rb_{H_\mu} h_j\Big\n_{L^2(E,\mu;H_\mu)}
\\ & \qquad
\le \n i_{\mu,H}\n  \Big\n D_{H_\mu} f\Big(\sum_{n=1}^K \phi_{h_n}(\cdot) h_n\Big) 
      - D_{H_\mu} f(\cdot)\Big\n_{L^2(E,\mu;H_\mu)},
\end{align*}
and the right-hand side tends to $0$ as $K\to\infty$ by dominated convergence,
since $\sum_{n=1}^K \phi_{h_n}(x) h_n\to x$ for $\mu$-almost all $x\in E$ and
the function $D_{H_\mu} f = i_{\mu,H}\s D_H f$ is uniformly bounded.  
\end{proof}

We now define
${\rm Lip}_{H}(E)$ as the vector space of all measurable functions that are 
Lipschitz continuous in the direction of $H$, i.e., for which there exists a finite constant $L_f(H)$  
such that $$ \|f(x+h) - f(x)\| \le L_f(H)\n h\n_H\quad \forall x\in E.$$
Note that we take norms in $H$ on the right-hand side. Obviously, every $f\in {\rm Lip}(E)$ belongs to 
${\rm Lip}_{H}(E)$, since 
$$ \n f(x+h) - f(x)\n \le L_f\n h\n_E  \le L_f\n i_H\n_{\calL(H,E)} \n h\n_H.$$
Here, $L_f$ is the Lipschitz constant of $f$ and $i_H$ is the embedding of $H$ into $E$.
It is also easy to see that if $f:E\to \R$ has a uniformly bounded G\^ateaux derivative in the direction of $H$,
then $f\in {\rm Lip}_{H}(E)$ with constant $L_f(H) \le \n D_H f\n_\infty$.

\begin{theorem}\label{thm:LipH} Let Assumptions \ref{ass:mu-infty} and
and \ref{ass:L-spectral-gap} hold.
 If $f\in {\rm Lip}_{H}(E)$ is uniformly bounded and has $H$-Lipschitz constant $L_f(H)$, then 
 $f\in\Dom(\nabla_H)$, $\nabla_{H} f \in L^{\infty}(E,\mu)$, and $\n \nabla_H f\n_{\infty} \le L_f(H)$.  
\end{theorem}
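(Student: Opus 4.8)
The plan is to deduce the statement from the preceding lemma by mollifying $f$ and passing to the limit, the delicate point being the choice of mollifier. Convolving $f$ against $\mu$, or applying the Ornstein--Uhlenbeck semigroup, would displace the argument of $f$ in directions outside $i_H(H)$ — directions along which $H$-Lipschitz continuity gives no control whatsoever (and $f$ need not even be $E$-continuous). Instead I would convolve $f$ with a Gaussian measure carried by the Hilbert space $H$ itself. Note $H$ is separable, since $H_\mu$ is separable and, by Assumption~\ref{ass:L-spectral-gap}, dense in $H$; fix an orthonormal basis $(h_k)_{k\ge1}$ of $H$ and, for $t>0$, let $\gamma_t$ be the centred Gaussian Radon measure on $H$ with covariance operator $t\sum_k 2^{-k}h_k\otimes h_k$. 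This operator is trace class and strictly positive, so $\gamma_t$ is well defined, $\int_H\|\xi\|_H^2\,d\gamma_t(\xi)=t$, and the Cameron--Martin space $H_{\gamma_t}$ of $\gamma_t$ is a dense linear subspace of $H$. Put
\[
f_t(x):=\int_H f(x+i_H\xi)\,d\gamma_t(\xi),\qquad x\in E,
\]
which is well defined (the integrand being bounded and jointly measurable) and measurable in $x$ by Fubini.

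Next I would verify that each $f_t$ satisfies the hypotheses of the preceding lemma with the sharp constant. Clearly $\|f_t\|_\infty\le\|f\|_\infty$, and $f_t$ is $H$-Lipschitz with constant $\le L_f(H)$, since every displacement under the integral moves the argument of $f$ by an element of $H$. For $h'\in H_{\gamma_t}$, the Cameron--Martin theorem for $\gamma_t$ shows that $f_t$ is G\^ateaux differentiable at \emph{every} $x\in E$ in the direction $i_H h'$, with derivative $\int_H f(x+i_H\xi)\,\widehat{h'}(\xi)\,d\gamma_t(\xi)$, where $\widehat{h'}\in L^2(\gamma_t)$ is the Paley--Wiener functional of $h'$; this depends linearly on $h'$ and, by the $H$-Lipschitz bound just obtained for $f_t$, is dominated by $L_f(H)\|h'\|_H$. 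Hence it extends to a vector $d_t(x)\in H$ with $\|d_t(x)\|_H\le L_f(H)$, and, writing an arbitrary $h\in H$ as a limit of vectors in $H_{\gamma_t}$ and absorbing the error via the Lipschitz bound on $f_t$, one checks that $f_t$ is G\^ateaux differentiable in every $H$-direction with derivative $D_H f_t=d_t$; weak measurability of $d_t$ is clear from the formula above, and strong measurability follows since $H$ is separable. The preceding lemma then gives $f_t\in\Dom(\nabla_H)$, $\nabla_H f_t=d_t$, and $\|\nabla_H f_t(\cdot)\|_H\le L_f(H)$ $\mu$-a.e.

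Finally I would let $t\downarrow0$. From $|f_t(x)-f(x)|\le L_f(H)\int_H\|\xi\|_H\,d\gamma_t(\xi)\le L_f(H)\sqrt t$ we get $f_t\to f$ uniformly on $E$, hence in $L^2(E,\mu)$. Since $\mu$ is a probability measure, $\|\nabla_H f_t\|_{L^2(E,\mu;H)}\le L_f(H)$, so $(\nabla_H f_{1/k})_k$ is bounded in the Hilbert space $L^2(E,\mu;H)$ and, along a subsequence, $\nabla_H f_{1/k}\rightharpoonup g$ weakly. The graph of $\nabla_H$ is norm-closed, hence weakly closed, in $L^2(E,\mu)\times L^2(E,\mu;H)$, and $(f_{1/k},\nabla_H f_{1/k})\rightharpoonup(f,g)$, so $f\in\Dom(\nabla_H)$ and $\nabla_H f=g$. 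Moreover the set $\{\Phi\in L^2(E,\mu;H):\|\Phi(\cdot)\|_H\le L_f(H)\ \mu\text{-a.e.}\}$ is convex and $L^2$-closed, hence weakly closed, and contains each $\nabla_H f_{1/k}$; therefore it contains $g$, giving $\|\nabla_H f\|_\infty\le L_f(H)$.

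The main obstacle is precisely the construction in the first paragraph: one needs a genuine mollification that keeps all displacements inside $H$ (so that the $H$-Lipschitz constant is respected exactly), and although there is no Gaussian on $E$ with Cameron--Martin space equal to $H$, there is a trace-class Gaussian on $H$ whose Cameron--Martin space is only \emph{dense} in $H$ — and that is enough, because the $H$-Lipschitz bound upgrades differentiability along a dense set of directions to differentiability in all $H$-directions. The remaining steps (the Cameron--Martin differentiation, the density upgrade, and the weak-compactness passage to the limit) are routine.
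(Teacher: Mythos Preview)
Your argument is correct and takes a genuinely different route from the paper. The paper's proof is essentially a two-line application of an infinite-dimensional Rademacher theorem: it cites \cite[Theorem 5.11.2]{Bo} to conclude that a bounded $H$-Lipschitz function is $\mu$-almost everywhere G\^ateaux differentiable in the direction of $H$ with $\|D_H f\|_H\le L_f(H)$, checks strong measurability of $D_H f$ via Pettis, and then invokes the preceding lemma directly. Your approach instead manufactures \emph{everywhere}-differentiable approximants by convolving against a trace-class Gaussian carried by $H$, uses Cameron--Martin to differentiate in the (dense) Cameron--Martin directions, upgrades to all of $H$ via the $H$-Lipschitz bound on $f_t$, applies the preceding lemma to the approximants, and then passes to the limit by weak $L^2$-compactness together with weak closedness of the graph of $\nabla_H$ and of the convex set $\{\|\Phi(\cdot)\|_H\le L_f(H)\}$. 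What your approach buys is self-containment: you avoid the non-trivial external citation, and you also sidestep a small friction in the paper's argument---the preceding lemma as written asks for G\^ateaux differentiability at \emph{every} point (its proof evaluates $D_H f$ along finite-dimensional subspaces of $H_\mu$, which are $\mu$-null), whereas the Rademacher-type theorem only delivers almost-everywhere differentiability. What the paper's approach buys is brevity: once one accepts the cited result, the proof is immediate.
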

\begin{proof}
It follows from \cite[Theorem 5.11.2]{Bo} and the observation following it
that $f$ is G\^ateaux differentiable in the direction of $H$ $\mu$-almost everywhere,
with derivative satisfying $\n D_H f\n \le L_f(H)$ $\mu$-almost everywhere.
This derivative is weakly measurable, as each $\lb D_H f,x\s\rb$ is
the almost everywhere limit of continuous difference quotients. Since $H$ is separable,
the Pettis Measurability theorem (see \cite[Section 2]{DU77} implies that $D_H f$ is strongly measurable.
 Now the result follows from the previous lemma.
\end{proof}

\medskip

{\em Acknowledgement} -- We thank Alan M$^{\rm c}$Intosh for showing us 
the method of proof in Example \ref{ex:alan}, drawing our attention to the paper \cite{AAM10}, and suggesting that, at least in the finite dimensional case, his approach to finite speed of propagation from \cite{McIMor} should apply to Ornstein-Uhlenbeck operators.
We thank Gilles Lancien for a helpful discussion about Lipschitz bump functions in Banach spaces.
The major part of this paper was written during a visit of the first-named author at the Australian National University.
He wishes to thank his colleagues at the Mathematical Sciences institute for the enjoyable atmosphere
and kind hospitality.

\bibliographystyle{amsplain}
\bibliography{finite-speed}

\end{document}